\date{\today}
\newtheorem{theorem}{Theorem}
\newtheorem{proposition}[theorem]{Proposition}
\newtheorem{corollary}[theorem]{Corollary}
\newtheorem{lemma}[theorem]{Lemma}
\theoremstyle{definition}
\newtheorem{remark}[theorem]{Remark}
\begin{document}

\title[On the monoids of monotone injective partial selfmaps of $\mathbb{N}^{2}_{\leqslant}$ ...]{On the monoid of monotone injective partial selfmaps of $\mathbb{N}^{2}_{\leqslant}$ with cofinite domains and images, II}

\author[O.~Gutik and I.~Pozdniakova]{Oleg~Gutik and Inna Pozdniakova}
\address{Department of Mathematics, National University of Lviv, Universytetska 1, Lviv, 79000, Ukraine}
\email{o\_gutik@franko.lviv.ua, ovgutik@yahoo.com, pozdnyakova.inna@gmail.com}

\keywords{Semigroup of bijective partial transformations, natural partial order, semidirect product, minimum group congruence, free commutative monoid. }

\subjclass[2010]{20M20, 20M18}

\begin{abstract}
Let $\mathbb{N}^{2}_{\leqslant}$ be the set $\mathbb{N}^{2}$ with the partial order defined as the product of usual order $\leq$ on the set of positive integers $\mathbb{N}$. We study the semigroup $\mathscr{P\!O}\!_{\infty}(\mathbb{N}^2_{\leqslant})$ of monotone injective partial selfmaps of $\mathbb{N}^{2}_{\leqslant}$ having cofinite domain and image. We describe the natural partial order on the semigroup $\mathscr{P\!O}\!_{\infty}(\mathbb{N}^2_{\leqslant})$ and show that it coincides with the natural partial order which is induced from symmetric inverse monoid $\mathscr{I}_{\mathbb{N}\times\mathbb{N}}$ over the set $\mathbb{N}\times\mathbb{N}$ onto the semigroup $\mathscr{P\!O}\!_{\infty}(\mathbb{N}^2_{\leqslant})$. We proved that the semigroup $\mathscr{P\!O}\!_{\infty}(\mathbb{N}^2_{\leqslant})$ is isomorphic to the semidirect product $\mathscr{P\!O}\!_{\infty}^{\,+}(\mathbb{N}^2_{\leqslant})\rtimes \mathbb{Z}_2$ of the monoid $\mathscr{P\!O}\!_{\infty}^{\,+}(\mathbb{N}^2_{\leqslant})$ of orientation-preserving monotone injective partial selfmaps of $\mathbb{N}^{2}_{\leqslant}$ with cofinite domains and images by the cyclic group $\mathbb{Z}_2$ of the order two. Also we describe the congruence $\sigma$ on the semigroup $\mathscr{P\!O}\!_{\infty}(\mathbb{N}^2_{\leqslant})$ which is generated by the natural order $\preccurlyeq$ on the semigroup $\mathscr{P\!O}\!_{\infty}(\mathbb{N}^2_{\leqslant})$: $\alpha\sigma\beta$ if and only if $\alpha$ and $\beta$ are comparable in $\left(\mathscr{P\!O}\!_{\infty}(\mathbb{N}^2_{\leqslant}),\preccurlyeq\right)$. We prove that the quotient semigroup $\mathscr{P\!O}\!_{\infty}^{\,+}(\mathbb{N}^2_{\leqslant})/\sigma$ is isomorphic to the free commutative monoid $\mathfrak{AM}_\omega$ over an infinite countable set and show that the quotient semigroup $\mathscr{P\!O}\!_{\infty}(\mathbb{N}^2_{\leqslant})/\sigma$ is isomorphic to the semidirect product of the free commutative monoid $\mathfrak{AM}_\omega$ by the group $\mathbb{Z}_2$.
\end{abstract}

\maketitle


We shall follow the terminology of~\cite{Clifford-Preston-1961-1967} and \cite{Howie-1995}.

In this paper we shall denote the first infinite cardinal by $\omega$ and the cardinality of the set $A$ by $|A|$.  We shall identify every set $X$ with its cardinality $|X|$. By $\mathbb{Z}_2$ we shall denote the cyclic group of order two. Also, for infinite subsets $A$ and $B$ of an infinite set $X$ we shall write $A{\subseteq^*}B$ if and only if there exists a finite subset $A_0$ of $A$ such that $A\setminus A_0\subseteq B$.

An algebraic semigroup $S$ is called {\it inverse} if for any element $x\in S$ there exists a unique $x^{-1}\in S$ such that $xx^{-1}x=x$ and $x^{-1}xx^{-1}=x^{-1}$. The element $x^{-1}$ is called the {\it inverse of} $x\in S$.

If $S$ is a semigroup, then we shall denote the subset of idempotents in $S$ by $E(S)$. If $S$ is an inverse semigroup, then $E(S)$ is closed under multiplication and we shall refer to $E(S)$ a \emph{band} (or the \emph{band of} $S$). If the band $E(S)$ is a non-empty subset of $S$, then the semigroup operation on $S$ determines the following partial order $\leqslant$ on $E(S)$: $e\leqslant f$ if and only if $ef=fe=e$. This order is called the {\em natural partial order} on $E(S)$. A \emph{semilattice} is a commutative semigroup of idempotents.

If $\alpha\colon X\rightharpoonup Y$ is a partial map, then by $\operatorname{dom}\alpha$ and $\operatorname{ran}\alpha$ we denote the domain and the range of $\alpha$, respectively.

Let $\mathscr{I}_\lambda$ denote the set of all partial one-to-one transformations of an infinite set $X$ of cardinality $\lambda$ together with the following semigroup operation: $x(\alpha\beta)=(x\alpha)\beta$ if $x\in\operatorname{dom}(\alpha\beta)=\{ y\in\operatorname{dom}\alpha\mid y\alpha\in\operatorname{dom}\beta\}$,  for $\alpha,\beta\in\mathscr{I}_\lambda$. The semigroup $\mathscr{I}_\lambda$ is called the \emph{symmetric inverse semigroup} over the set $X$~(see \cite[Section~1.9]{Clifford-Preston-1961-1967}). The symmetric inverse semigroup was introduced by Vagner~\cite{Vagner-1952} and it plays a major role in the theory of semigroups. An element $\alpha\in\mathscr{I}_\lambda$ is called \emph{cofinite}, if the sets $\lambda\setminus\operatorname{dom}\alpha$ and $\lambda\setminus\operatorname{ran}\alpha$ are finite.

Let $(X,\leqslant)$ be a partially ordered set (a poset). For an arbitrary $x\in X$ we denote
 \begin{equation*}
{\uparrow}x=\left\{y\in X\colon x\leqslant y\right\}.
\end{equation*}
We shall say that a partial map $\alpha\colon X\rightharpoonup X$ is \emph{monotone} if $x\leqslant y$ implies $(x)\alpha\leqslant(y)\alpha$ for $x,y\in \operatorname{dom}\alpha$.

Let $\mathbb{N}$ be the set of positive integers with the usual linear order $\le$. On the Cartesian product $\mathbb{N}\times\mathbb{N}$ we define the product partial order, i.e.,
\begin{equation*}
    (i,m)\leqslant(j,n) \qquad \hbox{if and only if} \qquad (i\leq j) \quad \hbox{and} \quad (m\leq n).
\end{equation*}
Later the set $\mathbb{N}\times\mathbb{N}$ with so defined partial order will be denoted by $\mathbb{N}^2_{\leqslant}$.

By $\mathscr{P\!O}\!_{\infty}(\mathbb{N}^2_{\leqslant})$ we denote the semigroup of injective partial monotone selfmaps of $\mathbb{N}^2_{\leqslant}$ with cofinite domains and images. Obviously, $\mathscr{P\!O}\!_{\infty}(\mathbb{N}^2_{\leqslant})$ is a submonoid of the symmetric inverse semigroup $\mathscr{I}_\omega$ and $\mathscr{P\!O}\!_{\infty}(\mathbb{N}^2_{\leqslant})$ is a countable semigroup.

Furthermore, we shall denote the identity of the semigroup $\mathscr{P\!O}\!_{\infty}(\mathbb{N}^2_{\leqslant})$ by $\mathbb{I}$ and the group of units of
$\mathscr{P\!O}\!_{\infty}(\mathbb{N}^2_{\leqslant})$ by $H(\mathbb{I})$.

For any positive integer $n$ and an arbitrary $\alpha\in \mathscr{P\!O}\!_{\infty}(\mathbb{N}^2_{\leqslant})$ we denote:
\begin{equation*}
\begin{split}
  \textsf{V}^n =\{(n,j)\colon j\in\mathbb{N}\}; \qquad & \qquad \textsf{H}^n =\{(j,n)\colon j\in\mathbb{N}\};\\
  \textsf{V}^n_{\operatorname{dom}\alpha}=\textsf{V}^n\cap\operatorname{dom}\alpha;   \qquad & \qquad
  \textsf{V}^n_{\operatorname{ran}\alpha}=\textsf{V}^n\cap\operatorname{ran}\alpha;\\
  \textsf{H}^n_{\operatorname{dom}\alpha}=\textsf{H}^n\cap\operatorname{dom}\alpha;   \qquad & \qquad
  \textsf{H}^n_{\operatorname{ran}\alpha}=\textsf{H}^n\cap\operatorname{ran}\alpha,
\end{split}
\end{equation*}
and
\begin{equation*}
  (i_{\alpha[*,j]},j_{\alpha[i,*]})=(i,j)\alpha, \qquad \hbox{for every} \quad (i,j)\in \operatorname{dom}\alpha.
\end{equation*}

\medskip

It well known that each partial injective cofinite selfmap $f$ of $\lambda$ induces a homeomorphism $f^*\colon\lambda^*\rightarrow\lambda^*$ of the remainder $\lambda^*=\beta\lambda\setminus\lambda$ of the Stone-\v{C}ech compactification of the discrete space $\lambda$. Moreover, under some set theoretic axioms (like \textbf{PFA} or \textbf{OCA}), each homeomorphism of $\omega^*$ is induced by some partial injective cofinite selfmap of $\omega$ (see  \cite{ShelahSteprans1989}--\cite{Velickovic1993}). So, the inverse semigroup  $\mathscr{I}^{\mathrm{cf}}_\lambda$ of injective partial selfmaps of an infinite cardinal $\lambda$ with
cofinite domains and images admits a natural homomorphism  $\mathfrak{h}\colon \mathscr{I}^{\mathrm{cf}}_\lambda\rightarrow \mathscr{H}(\lambda^*)$ to the homeomorphism group $\mathscr{H}(\lambda^*)$ of $\lambda^*$ and this homomorphism is surjective under certain set theoretic assumptions.

In the paper \cite{Gutik-Repovs-2015}  algebraic properties of the semigroup
$\mathscr{I}^{\mathrm{cf}}_\lambda$ are studied. It is showed that
$\mathscr{I}^{\mathrm{cf}}_\lambda$ is a bisimple inverse semigroup
and that for every non-empty chain $L$ in
$E(\mathscr{I}^{\mathrm{cf}}_\lambda)$ there exists an inverse
subsemigroup $S$ of $\mathscr{I}^{\mathrm{cf}}_\lambda$ such that
$S$ is isomorphic to the bicyclic semigroup and $L\subseteq E(S)$,
the Green relations on $\mathscr{I}^{\mathrm{cf}}_\lambda$ are described
and it is proved that every non-trivial congruence on
$\mathscr{I}^{\mathrm{cf}}_\lambda$ is a group congruence. Also, the structure of the quotient semigroup $\mathscr{I}^{\mathrm{cf}}_\lambda/\sigma$ is described, where $\sigma$ is the least group congruence on $\mathscr{I}^{\mathrm{cf}}_\lambda$.

The semigroups $\mathscr{I}_{\infty}^{\!\nearrow}(\mathbb{N})$ and $\mathscr{I}_{\infty}^{\!\nearrow}(\mathbb{Z})$ of injective isotone partial selfmaps with cofinite domains and images of positive integers and integers are studied in \cite{Gutik-Repovs-2011} and \cite{Gutik-Repovs-2012}, respectively. It was proved that the semigroups $\mathscr{I}_{\infty}^{\!\nearrow}(\mathbb{N})$ and $\mathscr{I}_{\infty}^{\!\nearrow}(\mathbb{Z})$ have similar properties to the bicyclic semigroup: they are bisimple and  every non-trivial homomorphic image $\mathscr{I}_{\infty}^{\!\nearrow}(\mathbb{N})$ and $\mathscr{I}_{\infty}^{\!\nearrow}(\mathbb{Z})$ is a group, and moreover the semigroup $\mathscr{I}_{\infty}^{\!\nearrow}(\mathbb{N})$ has $\mathbb{Z}(+)$ as a maximal group image and $\mathscr{I}_{\infty}^{\!\nearrow}(\mathbb{Z})$ has $\mathbb{Z}(+)\times\mathbb{Z}(+)$, respectively.

In the paper \cite{Gutik-Pozdnyakova-2014} we studied the semigroup
$\mathscr{I\!O}\!_{\infty}(\mathbb{Z}^n_{\operatorname{lex}})$ of monotone injective partial selfmaps of the set of $L_n\times_{\operatorname{lex}}\mathbb{Z}$ having cofinite domain and image, where $L_n\times_{\operatorname{lex}}\mathbb{Z}$ is the lexicographic product of $n$-elements chain and the set of integers with the usual linear order. In this paper we described
Green's relations on $\mathscr{I\!O}\!_{\infty}(\mathbb{Z}^n_{\operatorname{lex}})$,
showed that the semigroup $\mathscr{I\!O}\!_{\infty}(\mathbb{Z}^n_{\operatorname{lex}})$ is
bisimple and established its projective congruences. Also, we proved that $\mathscr{I\!O}\!_{\infty}(\mathbb{Z}^n_{\operatorname{lex}})$ is finitely generated, every automorphism of $\mathscr{I\!O}\!_{\infty}(\mathbb{Z})$ is inner and showed that in the case $n\geqslant 2$ the semigroup $\mathscr{I\!O}\!_{\infty}(\mathbb{Z}^n_{\operatorname{lex}})$ has non-inner automorphisms. In \cite{Gutik-Pozdnyakova-2014} we also proved that for every positive integer $n$ the quotient semigroup
$\mathscr{I\!O}\!_{\infty}(\mathbb{Z}^n_{\operatorname{lex}})/\sigma$, where $\sigma$ is a least group congruence on $\mathscr{I\!O}\!_{\infty}(\mathbb{Z}^n_{\operatorname{lex}})$, is isomorphic to the direct power $\left(\mathbb{Z}(+)\right)^{2n}$. The structure of the sublattice of congruences on $\mathscr{I\!O}\!_{\infty}(\mathbb{Z}^n_{\operatorname{lex}})$ that are contained in the least group congruence is described in \cite{Gutik-Pozdniakova-2014}.

In the paper \cite{Gutik-Pozdniakova-2016??} we studied algebraic properties of the semigroup $\mathscr{P\!O}\!_{\infty}(\mathbb{N}^2_{\leqslant})$. We described properties of elements of the semigroup $\mathscr{P\!O}\!_{\infty}(\mathbb{N}^2_{\leqslant})$ as monotone partial bijection of $\mathbb{N}^{2}_{\leqslant}$ and showed that the group of units of $\mathscr{P\!O}\!_{\infty}(\mathbb{N}^2_{\leqslant})$ is isomorphic to the cyclic group of order two. Also in \cite{Gutik-Pozdniakova-2016??} the subsemigroup of idempotents of $\mathscr{P\!O}\!_{\infty}(\mathbb{N}^2_{\leqslant})$ and the Green relations on $\mathscr{P\!O}\!_{\infty}(\mathbb{N}^2_{\leqslant})$ are described. In particular, here we proved that $\mathscr{D}=\mathscr{J}$  in $\mathscr{P\!O}\!_{\infty}(\mathbb{N}^2_{\leqslant})$.

The present paper is a continuation of \cite{Gutik-Pozdniakova-2016??}. We describe the natural partial order $\preccurlyeq$ on the semigroup $\mathscr{P\!O}\!_{\infty}(\mathbb{N}^2_{\leqslant})$ and show that it coincides with the natural partial order which is induced from symmetric inverse monoid $\mathscr{I}_{\mathbb{N}\times\mathbb{N}}$ over the set $\mathbb{N}\times\mathbb{N}$ onto the semigroup $\mathscr{P\!O}\!_{\infty}(\mathbb{N}^2_{\leqslant})$. We proved that the semigroup $\mathscr{P\!O}\!_{\infty}(\mathbb{N}^2_{\leqslant})$ is isomorphic to the semidirect product $\mathscr{P\!O}\!_{\infty}^{\,+}(\mathbb{N}^2_{\leqslant})\rtimes \mathbb{Z}_2$ of the monoid $\mathscr{P\!O}\!_{\infty}^{\,+}(\mathbb{N}^2_{\leqslant})$ of orientation-preserving monotone injective partial selfmaps of $\mathbb{N}^{2}_{\leqslant}$ with cofinite domains and images by the cyclic group $\mathbb{Z}_2$ of the order two. Also we describe the congruence $\sigma$ on the semigroup $\mathscr{P\!O}\!_{\infty}(\mathbb{N}^2_{\leqslant})$, which is generated by the natural order $\preccurlyeq$ on the semigroup $\mathscr{P\!O}\!_{\infty}(\mathbb{N}^2_{\leqslant})$: $\alpha\sigma\beta$ if and only if $\alpha$ and $\beta$ are comparable in $\left(\mathscr{P\!O}\!_{\infty}(\mathbb{N}^2_{\leqslant}),\preccurlyeq\right)$. We prove that the quotient semigroup $\mathscr{P\!O}\!_{\infty}^{\,+}(\mathbb{N}^2_{\leqslant})/\sigma$ is isomorphic to the free commutative monoid $\mathfrak{AM}_\omega$ over an infinite countable set and show that quotient semigroup $\mathscr{P\!O}\!_{\infty}(\mathbb{N}^2_{\leqslant})/\sigma$ is isomorphic to the semidirect product of the free commutative monoid $\mathfrak{AM}_\omega$ by the group $\mathbb{Z}_2$.


\medskip


The following proposition implies that the equations of the form $a\cdot x=b$ and $x\cdot c=d$ in the semigroup $\mathscr{P\!O}\!_{\infty}(\mathbb{N}^2_{\leqslant})$ have finitely many solutions. This property holds for the bicyclic monoid, many its generalizations and other semigroups (see corresponding results in \cite{Bardyla-Gutik-2016, Eberhart-Selden-1969, Gutik-Pozdnyakova-2014, Gutik-Repovs-2011, Gutik-Repovs-2012, Gutik-Repovs-2015}).

\begin{proposition}\label{proposition-2.1}
For every $\alpha,\beta\in\mathscr{P\!O}\!_{\infty}(\mathbb{N}^2_{\leqslant})$, both sets
\begin{equation*}
\left\{\chi\in\mathscr{P\!O}\!_{\infty}(\mathbb{N}^2_{\leqslant})\mid \alpha\cdot\chi=\beta\right\}\qquad \mbox{ and } \qquad
\left\{\chi\in\mathscr{P\!O}\!_{\infty}(\mathbb{N}^2_{\leqslant})\mid \chi\cdot\alpha=\beta\right\}
\end{equation*}
are finite. Consequently, every right translation and every left translation by an element of the semigroup $\mathscr{P\!O}\!_{\infty}(\mathbb{N}^2_{\leqslant})$ is a finite-to-one map.
\end{proposition}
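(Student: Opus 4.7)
The plan is to show that in both equations the unknown $\chi$ is forced to agree with a uniquely determined partial map on a cofinite subset of $\mathbb{N}\times\mathbb{N}$, and that the remaining freedom lies in the choice of a partial injection between two finite sets; finitely many such partial injections then yield finitely many candidates for $\chi$, and the monotonicity and cofiniteness requirements on $\chi$ can only further restrict this already finite set.

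For the first set, suppose $\alpha\cdot\chi=\beta$. Since $\operatorname{dom}(\alpha\chi)=\operatorname{dom}\beta$, for every $x\in\operatorname{dom}\beta$ one has $x\alpha\in\operatorname{dom}\chi$ and $(x\alpha)\chi=x\beta$. Consequently, on the set $A=(\operatorname{dom}\beta)\alpha$ the map $\chi$ is forced to coincide with $y\mapsto(y\alpha^{-1})\beta$. Conversely, if $y\in\operatorname{dom}\chi\cap\operatorname{ran}\alpha$, then $y=x\alpha$ for some $x\in\operatorname{dom}\alpha$, and since $x\alpha\in\operatorname{dom}\chi$ we get $x\in\operatorname{dom}(\alpha\chi)=\operatorname{dom}\beta$, hence $y\in A$. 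Thus $\operatorname{dom}\chi\cap\operatorname{ran}\alpha=A$ exactly, so $\chi$ is forced to be undefined on the finite set $\operatorname{ran}\alpha\setminus A=(\operatorname{dom}\alpha\setminus\operatorname{dom}\beta)\alpha$. Because $\operatorname{dom}\beta$ and $\operatorname{ran}\alpha$ are cofinite in $\mathbb{N}\times\mathbb{N}$, the set $A$ is itself cofinite.

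The only remaining freedom is the restriction of $\chi$ to $(\mathbb{N}\times\mathbb{N})\setminus\operatorname{ran}\alpha$. Injectivity of $\chi$ together with $A\chi=\operatorname{ran}\beta$ forces the values of $\chi$ on this set to lie in $(\mathbb{N}\times\mathbb{N})\setminus\operatorname{ran}\beta$. Hence the free part of $\chi$ is a partial injection from the finite set $(\mathbb{N}\times\mathbb{N})\setminus\operatorname{ran}\alpha$ into the finite set $(\mathbb{N}\times\mathbb{N})\setminus\operatorname{ran}\beta$; there are only finitely many such partial injections, and at most finitely many of them extend the forced part to a monotone element of $\mathscr{P\!O}\!_{\infty}(\mathbb{N}^2_{\leqslant})$. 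Cofiniteness of $\operatorname{dom}\chi$ and $\operatorname{ran}\chi$ is automatic from $A\subseteq\operatorname{dom}\chi$ and $\operatorname{ran}\beta\subseteq\operatorname{ran}\chi$.

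The second set is handled symmetrically: assuming $\chi\cdot\alpha=\beta$, the condition forces $x\beta\in\operatorname{ran}\alpha$ for every $x\in\operatorname{dom}\beta$ (if this fails, the set is empty) and $\chi$ to coincide on the cofinite set $\operatorname{dom}\beta$ with $x\mapsto(x\beta)\alpha^{-1}$, so that the restriction of $\chi$ to the finite complement of $\operatorname{dom}\beta$ becomes a partial injection into the finite complement of $(\operatorname{ran}\beta)\alpha^{-1}$, again yielding only finitely many possibilities. The main point requiring care in both arguments is the bookkeeping of which cofinite set arises as the forced part of $\chi$ and which finite complements bound the free part; once these are correctly identified, the finite-to-one property of the translations is immediate.
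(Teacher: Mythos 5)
Your proof is correct and follows essentially the same route as the paper's: any solution $\chi$ of $\alpha\cdot\chi=\beta$ is forced to agree with $\alpha^{-1}\beta$ on a cofinite set and can only vary on a finite set, whence finiteness of the solution set. Your write-up is in fact slightly more complete than the paper's, since you explicitly bound the residual freedom by the finitely many partial injections from $(\mathbb{N}\times\mathbb{N})\setminus\operatorname{ran}\alpha$ into $(\mathbb{N}\times\mathbb{N})\setminus\operatorname{ran}\beta$, whereas the paper only asserts that any two solutions differ at finitely many points.
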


\begin{proof}
We consider the case of the equation $\alpha\cdot\chi=\beta$. In the case of the equation $\chi\cdot\alpha=\beta$ the proof is similar.

The definition of the semigroup $\mathscr{P\!O}\!_{\infty}(\mathbb{N}^2_{\leqslant})$ and the equality $\alpha\cdot\chi=\beta$ imply that $\operatorname{dom}\beta\subseteq\operatorname{dom}\alpha$ and $\operatorname{ran}\chi\subseteq\operatorname{ran}\alpha$. Since any element of the semigroup $\mathscr{P\!O}\!_{\infty}(\mathbb{N}^2_{\leqslant})$ has a cofinite domain and a cofinite image in $\mathbb{N}\times\mathbb{N}$, we conclude that if an element $\chi_0$ satisfies the equality $\alpha\cdot\chi=\beta$ then for every other root $\chi$ of the equation $\alpha\cdot\chi=\beta$ there exist finitely many $(i,j)\in(\mathbb{N}\times\mathbb{N})\setminus\operatorname{ran}\beta$ such that one of the following conditions holds:
\begin{itemize}
  \item[$(1)$] $(i,j)\chi\neq(i,j)\chi_0$;
  \item[$(2)$] $(i,j)\chi$ is determined and $(i,j)\chi_0$ is undetermined;
  \item[$(3)$] $(i,j)\chi_0$ is determined and $(i,j)\chi$ is undetermined.
\end{itemize}
This implies that the equation $\alpha\cdot\chi=\beta$ has finitely many solutions, which completes the proof of the proposition.
\end{proof}

Later we shall describe the natural partial order ``$\preccurlyeq$'' on the semigroup $\mathscr{P\!O}\!_{\infty}(\mathbb{N}^2_{\leqslant})$. For   $\alpha,\beta\in\mathscr{P\!O}\!_{\infty}(\mathbb{N}^2_{\leqslant})$ we put
\begin{equation*}
  \alpha\preccurlyeq\beta \qquad \hbox{if and only if} \qquad \alpha=\beta\varepsilon \quad \hbox{for some} \quad \varepsilon\in E(\mathscr{P\!O}\!_{\infty}(\mathbb{N}^2_{\leqslant})).
\end{equation*}

We need the following proposition from \cite{Mitsch-1986}.

\begin{proposition}[{\cite[p.~387, Corollary]{Mitsch-1986}}]\label{proposition-2.2}
For any semigroup $S$ and its natural partial order $\preccurlyeq$ the following conditions are equivalent:
\begin{itemize}
  \item[$(i)$] $a\preccurlyeq b$;
  \item[$(ii)$] $a=wb=bz$, $az=a$ for some $w,z\in  S^1$;
  \item[$(iii)$] $a=xb=by$, $xa=ay=a$ for some $x,y\in  S^1$.
\end{itemize}
\end{proposition}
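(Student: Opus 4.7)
The plan is to prove the equivalence (ii) $\Leftrightarrow$ (iii) directly, with (i) identified with (iii) by definition of the natural partial order on a semigroup: in Mitsch's original formulation, $a \preccurlyeq b$ means that there exist $x, y \in S^1$ satisfying exactly the conditions listed in (iii). The whole argument turns on one elementary identity, which I would establish first.

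The key observation is that whenever $a = xb = by$ for some $x, y \in S^1$, one has $xa = ay$. Indeed, by associativity,
\begin{equation*}
xa = x(by) = (xb)y = ay.
\end{equation*}
Consequently, given such $x$ and $y$, the conditions $xa = a$, $ay = a$, and $xa = ay = a$ are mutually equivalent. This is what lets the seemingly one-sided hypothesis $az = a$ in (ii) carry the same content as the two-sided hypothesis $xa = ay = a$ in (iii).

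The implication (iii) $\Rightarrow$ (ii) is then immediate: simply take $w := x$ and $z := y$, so that $a = wb = bz$ and $az = ay = a$. For the reverse implication (ii) $\Rightarrow$ (iii), set $x := w$ and $y := z$; the equalities $a = xb = by$ and $ay = az = a$ hold by hypothesis, while the remaining condition $xa = a$ follows from the key observation (or, explicitly, from $xa = w(bz) = (wb)z = az = a$, using $a = wb$ and $az = a$).

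The only genuine step is the single-line identity $xa = x(by) = (xb)y = ay$; everything else amounts to relabelling witnesses. There is no serious obstacle: the proposition's content is simply that, in the presence of the sandwich equations $a = xb = by$, the one-sided fixpoint condition of (ii) and the two-sided fixpoint condition of (iii) describe the same relation, so each of (ii) and (iii) can be taken as a characterization of the natural partial order $\preccurlyeq$ on an arbitrary semigroup.
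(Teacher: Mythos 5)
Your argument is correct, but there is nothing in the paper to compare it against: Proposition~\ref{proposition-2.2} is quoted verbatim from Mitsch's paper (p.~387, Corollary of \cite{Mitsch-1986}) and is used as a black box in the proof of Proposition~\ref{proposition-2.3}, so the paper contains no proof of this statement at all. Your derivation therefore supplies what the paper deliberately omits (the paper's citation buys brevity; your proof buys self-containedness at the cost of a few lines), and it rests on exactly the right tool: the one-line identity $xa=x(by)=(xb)y=ay$, valid whenever $a=xb=by$, which makes the fixpoint conditions $xa=a$, $ay=a$ and $xa=ay=a$ interchangeable, after which $(ii)\Leftrightarrow(iii)$ is pure relabelling of witnesses. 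The one caveat is your treatment of $(i)$: you assert that Mitsch's original definition of $a\preccurlyeq b$ is literally condition $(iii)$, whereas in \cite{Mitsch-1986} the order is introduced by a one-sided condition, $a=xb=by$ and $xa=a$ for some $x,y\in S^1$ (the mirror image of $(ii)$), and the content of the quoted Corollary is precisely that the one-sided and two-sided formulations coincide. This costs you nothing mathematically: your key identity shows that each of these sandwich-plus-fixpoint conditions implies all the others, so the chain $(i)\Leftrightarrow(ii)\Leftrightarrow(iii)$ closes no matter which variant is taken as the definition of $\preccurlyeq$. But the phrase ``$(i)$ is $(iii)$ by definition'' should be weakened to ``$(i)$ is, by definition, one of these mutually equivalent sandwich-plus-fixpoint conditions,'' with the equivalence supplied by your lemma.
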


\begin{proposition}\label{proposition-2.3}
The relation $\preccurlyeq$ is the natural partial order on the semigroup $\mathscr{P\!O}\!_{\infty}(\mathbb{N}^2_{\leqslant})$.
\end{proposition}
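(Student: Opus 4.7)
The plan is to verify that the relation $\preccurlyeq$ (defined by $\alpha=\beta\varepsilon$ with $\varepsilon\in E(\mathscr{P\!O}\!_{\infty}(\mathbb{N}^2_{\leqslant}))$) coincides with Mitsch's natural partial order by establishing the equivalence with condition $(iii)$ of Proposition~\ref{proposition-2.2}. Two auxiliary idempotents will do all the work: the partial identity $\iota_{\operatorname{dom}\alpha}$ on $\operatorname{dom}\alpha$ and the partial identity $\iota_{\operatorname{ran}\alpha}$ on $\operatorname{ran}\alpha$. Each of these lies in $E(\mathscr{P\!O}\!_{\infty}(\mathbb{N}^2_{\leqslant}))$ because $\operatorname{dom}\alpha$ and $\operatorname{ran}\alpha$ are cofinite subsets of $\mathbb{N}\times\mathbb{N}$ and a partial identity is trivially monotone and injective.

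For the forward direction, I assume $\alpha=\beta\varepsilon$ with $\varepsilon\in E(\mathscr{P\!O}\!_{\infty}(\mathbb{N}^2_{\leqslant}))$ and set $x=\iota_{\operatorname{dom}\alpha}$, $y=\varepsilon$. The equalities $\beta y=\beta\varepsilon=\alpha$ and $\alpha y=\beta\varepsilon^{2}=\beta\varepsilon=\alpha$ are immediate from $\varepsilon^{2}=\varepsilon$. The defining identity $\alpha=\beta\varepsilon$ forces $\operatorname{dom}\alpha\subseteq\operatorname{dom}\beta$ and $(i,j)\alpha=(i,j)\beta$ for every $(i,j)\in\operatorname{dom}\alpha$ (since $\varepsilon$ acts as the identity on its own domain). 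Hence $x\beta=\iota_{\operatorname{dom}\alpha}\beta$ is precisely $\beta$ restricted to $\operatorname{dom}\alpha$, which coincides with $\alpha$, while $x\alpha=\iota_{\operatorname{dom}\alpha}\alpha=\alpha$.

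For the converse, I assume condition $(iii)$ of Proposition~\ref{proposition-2.2} and set $\varepsilon=\iota_{\operatorname{ran}\alpha}$. The assumption $\alpha y=\alpha$ forces $\operatorname{ran}\alpha\subseteq\operatorname{dom}y$ and that $y$ acts as the identity on $\operatorname{ran}\alpha$. Combining this with $\alpha=\beta y$, one obtains $((i,j)\beta)y=(i,j)\alpha=((i,j)\alpha)y$ for every $(i,j)\in\operatorname{dom}\alpha$, and the injectivity of the partial bijection $y$ then yields $(i,j)\beta=(i,j)\alpha\in\operatorname{ran}\alpha$. A short bookkeeping argument, using the injectivity of $\beta$, shows that $\operatorname{dom}(\beta\varepsilon)=\{(i,j)\in\operatorname{dom}\beta:(i,j)\beta\in\operatorname{ran}\alpha\}=\operatorname{dom}\alpha$, and on this set $\beta\varepsilon$ agrees with $\beta$, hence with $\alpha$. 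Thus $\alpha=\beta\varepsilon$.

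The main obstacle is the reverse direction, where one must carefully combine $\alpha y=\alpha$, injectivity of the partial injection $y$, and injectivity of $\beta$ to identify $\operatorname{dom}(\beta\varepsilon)$ with $\operatorname{dom}\alpha$; the rest is direct application of the definitions of composition in $\mathscr{I}_\omega$. Once the equivalence with Mitsch's condition is established, the reflexivity, antisymmetry and transitivity of $\preccurlyeq$ follow automatically, since Mitsch's natural partial order is always a partial order on any semigroup.
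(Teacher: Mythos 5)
Your proof is correct. The forward direction is essentially the paper's own argument: from $\alpha=\beta\varepsilon$ you extract the partial identity on $\operatorname{dom}\alpha$ as the left factor, note $\alpha\varepsilon=\beta\varepsilon^{2}=\alpha$, and feed the resulting data into condition $(iii)$ of Proposition~\ref{proposition-2.2}; the paper does exactly this with $\iota$ the identity map of $\operatorname{dom}(\beta\varepsilon)$. Where you go beyond the paper is the converse inclusion. The paper's proof establishes only that $\alpha=\beta\varepsilon$ implies the Mitsch conditions and then says ``we apply Proposition~\ref{proposition-2.2}'', leaving unproved the implication that any pair comparable in Mitsch's natural partial order can be written as $\alpha=\beta\varepsilon$ with $\varepsilon\in E(\mathscr{P\!O}\!_{\infty}(\mathbb{N}^2_{\leqslant}))$ --- and since $\mathscr{P\!O}\!_{\infty}(\mathbb{N}^2_{\leqslant})$ is not an inverse semigroup, this is not automatic. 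Your argument with $\varepsilon=\iota_{\operatorname{ran}\alpha}$ supplies exactly this missing half: $\alpha y=\alpha$ shows $y$ fixes $\operatorname{ran}\alpha$ pointwise, injectivity of $y$ combined with $\alpha=\beta y$ gives that $\beta$ agrees with $\alpha$ on $\operatorname{dom}\alpha$, and injectivity of $\beta$ identifies $\operatorname{dom}(\beta\varepsilon)$ with $\operatorname{dom}\alpha$, whence $\alpha=\beta\varepsilon$. All steps check out (the idempotents of $\mathscr{P\!O}\!_{\infty}(\mathbb{N}^2_{\leqslant})$ are indeed the partial identities of cofinite subsets, so both auxiliary maps lie in $E(\mathscr{P\!O}\!_{\infty}(\mathbb{N}^2_{\leqslant}))$). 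In short, your writeup is, if anything, more complete than the paper's.
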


\begin{proof}
Suppose that $\alpha=\beta\varepsilon$ for some idempotent $\varepsilon\in E(\mathscr{P\!O}\!_{\infty}(\mathbb{N}^2_{\leqslant}))$. Then we have that
\begin{equation*}
\alpha\varepsilon=(\beta\varepsilon)\varepsilon=\beta(\varepsilon\varepsilon)=\beta\varepsilon=\alpha.
\end{equation*}
Let $\iota\colon \operatorname{dom}(\beta\varepsilon)\to \operatorname{dom}(\beta\varepsilon)$ be the identity map of the set $\operatorname{dom}(\beta\varepsilon)$. Then $\iota\in E(\mathscr{P\!O}\!_{\infty}(\mathbb{N}^2_{\leqslant}))$ and the definition of the semigroup $\mathscr{P\!O}\!_{\infty}(\mathbb{N}^2_{\leqslant})$ implies that $\operatorname{dom}(\beta\varepsilon)=\operatorname{dom}(\iota\beta)$, because $\varepsilon$ is an idempotent of $\mathscr{P\!O}\!_{\infty}(\mathbb{N}^2_{\leqslant})$. This implies that $(i,j)\iota\beta=(i,j)\beta\varepsilon$ for each $(i,j)\in \operatorname{dom}(\iota\beta)$ and hence we get that $\alpha=\beta\varepsilon=\iota\beta$. Next we apply Proposition~\ref{proposition-2.2}.
\end{proof}

\begin{remark}\label{remark-2.4}
Proposition~\ref{proposition-2.3} implies that the natural partial order on the semigroup $\mathscr{P\!O}\!_{\infty}(\mathbb{N}^2_{\leqslant})$ coincides with the natural partial order which is induced from symmetric inverse monoid $\mathscr{I}_{\mathbb{N}\times\mathbb{N}}$ over the set $\mathbb{N}\times\mathbb{N}$ onto the semigroup $\mathscr{P\!O}\!_{\infty}(\mathbb{N}^2_{\leqslant})$.
\end{remark}

We define a relation $\sigma$ on the semigroup $\mathscr{P\!O}\!_{\infty}(\mathbb{N}^2_{\leqslant})$ in the following way:
\begin{equation*}
  \alpha\sigma\beta \qquad \hbox{if and only if there exists} \qquad \varepsilon\in E(\mathscr{P\!O}\!_{\infty}(\mathbb{N}^2_{\leqslant})) \quad \hbox{such that} \quad \alpha\varepsilon=\beta\varepsilon,
\end{equation*}
for $\alpha,\beta\in \mathscr{P\!O}\!_{\infty}(\mathbb{N}^2_{\leqslant})$.

\begin{proposition}\label{proposition-2.5}
For $\alpha,\beta\in \mathscr{P\!O}\!_{\infty}(\mathbb{N}^2_{\leqslant})$ the following conditions are equivalent:
\begin{itemize}
  \item[$(i)$] $\alpha\sigma\beta$;
  \item[$(ii)$] there exist $\varsigma,\upsilon \in E(\mathscr{P\!O}\!_{\infty}(\mathbb{N}^2_{\leqslant}))$ such that $\alpha\varsigma=\beta\upsilon$;
  \item[$(iii)$] there exist $\varsigma,\upsilon \in E(\mathscr{P\!O}\!_{\infty}(\mathbb{N}^2_{\leqslant}))$ such that $\alpha\varsigma=\upsilon\beta$;
  \item[$(iv)$] there exists $\iota\in E(\mathscr{P\!O}\!_{\infty}(\mathbb{N}^2_{\leqslant}))$ such that $\iota\alpha=\iota\beta$;
  \item[$(v)$] there exist $\varsigma,\upsilon \in E(\mathscr{P\!O}\!_{\infty}(\mathbb{N}^2_{\leqslant}))$ such that $\varsigma\alpha=\upsilon\beta$.
\end{itemize}
Thus $\sigma$ is a congruence on the semigroup $\mathscr{P\!O}\!_{\infty}(\mathbb{N}^2_{\leqslant})$.
\end{proposition}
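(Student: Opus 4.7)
The plan is to exploit the fact (established in~\cite{Gutik-Pozdniakova-2016??}) that every idempotent of $\mathscr{P\!O}\!_{\infty}(\mathbb{N}^2_{\leqslant})$ is the partial identity on some cofinite subset of $\mathbb{N}\times\mathbb{N}$, and therefore any two idempotents commute. For any $\gamma\in\mathscr{P\!O}\!_{\infty}(\mathbb{N}^2_{\leqslant})$ and any idempotent $\varepsilon$, the product $\gamma\varepsilon$ is $\gamma$ restricted to $\{x\in\operatorname{dom}\gamma:(x)\gamma\in\operatorname{dom}\varepsilon\}$, while $\varepsilon\gamma$ is $\gamma$ restricted to $\operatorname{dom}\varepsilon\cap\operatorname{dom}\gamma$; moreover, by the rewrite in the proof of Proposition~\ref{proposition-2.3}, any right multiplication $\gamma\varepsilon$ can be rewritten as a left multiplication $\iota\gamma$ with $\iota$ the partial identity on $\operatorname{dom}(\gamma\varepsilon)$, and dually.

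For the equivalences, first (i)~$\Rightarrow$~(ii) and (iv)~$\Rightarrow$~(v) are immediate (take $\varsigma=\upsilon=\varepsilon$ or $=\iota$). For (ii)~$\Rightarrow$~(i), given $\alpha\varsigma=\beta\upsilon$ I put $\varepsilon:=\varsigma\upsilon$; then $\alpha\varepsilon=(\alpha\varsigma)\upsilon=(\beta\upsilon)\upsilon=\beta\upsilon$, while $\beta\varepsilon=\beta\varsigma\upsilon=\beta\upsilon\varsigma=\beta\upsilon$, where the last step uses $\operatorname{ran}(\beta\upsilon)=\operatorname{ran}(\alpha\varsigma)\subseteq\operatorname{dom}\varsigma$, so that postcomposition with $\varsigma$ is trivial. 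The implication (v)~$\Rightarrow$~(iv) is completely dual. For (ii)~$\Leftrightarrow$~(iii), I apply the rewrite from Proposition~\ref{proposition-2.3}: $\beta\upsilon=\upsilon'\beta$ with $\upsilon'$ the partial identity on $\operatorname{dom}(\beta\upsilon)$, and conversely $\upsilon\beta=\beta\upsilon''$ with $\upsilon''$ the partial identity on $\beta(\operatorname{dom}\upsilon\cap\operatorname{dom}\beta)$, which is cofinite in $\mathbb{N}\times\mathbb{N}$ since $\beta$ is injective with cofinite domain and image.

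The central step is (i)~$\Leftrightarrow$~(iv). Given $\alpha\varepsilon=\beta\varepsilon$, let $D=\operatorname{dom}(\alpha\varepsilon)=\operatorname{dom}(\beta\varepsilon)$, a cofinite subset of $\mathbb{N}\times\mathbb{N}$ on which $\alpha$ and $\beta$ coincide, and let $\iota$ be the partial identity on $D$; then $\iota\alpha=\alpha|_D=\alpha\varepsilon=\beta\varepsilon=\beta|_D=\iota\beta$. Conversely, given $\iota\alpha=\iota\beta$, set $D=\operatorname{dom}\iota\cap\operatorname{dom}\alpha=\operatorname{dom}\iota\cap\operatorname{dom}\beta$ (equality forced by $\operatorname{dom}(\iota\alpha)=\operatorname{dom}(\iota\beta)$); then $\alpha|_D=\beta|_D$, the set $E:=(D)\alpha=(D)\beta$ is cofinite, and for $\varepsilon$ the partial identity on $E$ the injectivity of $\alpha,\beta$ yields $\operatorname{dom}(\alpha\varepsilon)=\operatorname{dom}(\beta\varepsilon)=D$, whence $\alpha\varepsilon=\beta\varepsilon$.

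Once the five conditions are equivalent, $\sigma$ is a congruence by a standard argument: reflexivity and symmetry are immediate from the definition; transitivity follows by taking $\varepsilon:=\varepsilon_1\varepsilon_2$ when $\alpha\varepsilon_1=\beta\varepsilon_1$ and $\beta\varepsilon_2=\gamma\varepsilon_2$, using commutativity of idempotents; left compatibility ($\delta\alpha\,\sigma\,\delta\beta$) is obtained by premultiplying $\alpha\varepsilon=\beta\varepsilon$ by $\delta$, and right compatibility ($\alpha\delta\,\sigma\,\beta\delta$) follows by invoking (iv) to get $\iota\alpha=\iota\beta$, postmultiplying by $\delta$, and reading the resulting identity via (v)~$\Rightarrow$~(i). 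The main obstacle I expect is the bookkeeping around (i)~$\Leftrightarrow$~(iv)---verifying that the partial identities constructed genuinely belong to $\mathscr{P\!O}\!_{\infty}(\mathbb{N}^2_{\leqslant})$ (i.e.\ have cofinite domain) and that the relevant products really share a common domain---but these verifications are routine once one internalises the restriction formulas for $\gamma\varepsilon$ and $\varepsilon\gamma$ together with the cofiniteness of all sets in play.
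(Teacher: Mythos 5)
Your proposal is correct and follows essentially the same route as the paper: both arguments rest on the facts that idempotents of $\mathscr{P\!O}\!_{\infty}(\mathbb{N}^2_{\leqslant})$ are partial identities on cofinite sets (hence commute) and that a product $\gamma\varepsilon$ can be rewritten as $\iota\gamma$ for the partial identity $\iota$ on $\operatorname{dom}(\gamma\varepsilon)$ (and dually), and the congruence verification (transitivity via $\varepsilon_1\varepsilon_2$, compatibility via condition $(iv)$) is the same. The only differences are cosmetic: you link the conditions in a slightly different order, and you prove $(i)\Leftrightarrow(iv)$ directly where the paper passes through $(iii)$.
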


\begin{proof}
Implication $(i)\Rightarrow(ii)$ is trivial.

$(ii)\Rightarrow(i)$ If we have that $\alpha\varsigma=\beta\upsilon$ for some $\varsigma,\upsilon \in E(\mathscr{P\!O}\!_{\infty}(\mathbb{N}^2_{\leqslant}))$ then $\alpha\varsigma(\varsigma\upsilon)=\beta\upsilon(\varsigma\upsilon)$. Since $\mathscr{P\!O}\!_{\infty}(\mathbb{N}^2_{\leqslant})$ is a subsemigroup of the symmetric inverse monoid $\mathscr{I}_{|\mathbb{N}\times\mathbb{N}|}$, the idempotents in the semigroup $\mathscr{P\!O}\!_{\infty}(\mathbb{N}^2_{\leqslant})$ commute and hence $\alpha(\varsigma\upsilon)=\beta(\varsigma\upsilon)$. This implies that $\alpha\sigma\beta$.

$(ii)\Rightarrow(iii)$ Suppose that $\alpha\varsigma=\beta\upsilon$ for some $\varsigma,\upsilon \in E(\mathscr{P\!O}\!_{\infty}(\mathbb{N}^2_{\leqslant}))$. Let $\iota\colon \operatorname{dom}(\beta\upsilon)\to \operatorname{dom}(\beta\upsilon)$ be the identity map of the set $\operatorname{dom}(\beta\upsilon)$. Then $\iota\in E(\mathscr{P\!O}\!_{\infty}(\mathbb{N}^2_{\leqslant}))$ and the definition of the semigroup $\mathscr{P\!O}\!_{\infty}(\mathbb{N}^2_{\leqslant})$ implies that $\operatorname{dom}(\beta\upsilon)=\operatorname{dom}(\iota\beta)$, because $\upsilon$ is an idempotent of $\mathscr{P\!O}\!_{\infty}(\mathbb{N}^2_{\leqslant})$. This implies that $(i,j)\iota\beta=(i,j)\beta\upsilon$ for each $(i,j)\in \operatorname{dom}(\iota\beta)$ and hence we get that $\alpha\varsigma=\beta\upsilon=\iota\beta$.

$(iii)\Rightarrow(ii)$ Suppose that $\alpha\varsigma=\upsilon\beta$ for some $\varsigma,\upsilon \in E(\mathscr{P\!O}\!_{\infty}(\mathbb{N}^2_{\leqslant}))$. Let $\iota\colon \operatorname{ran}(\upsilon\beta)\to \operatorname{ran}(\upsilon\beta)$ be the identity map of the set $\operatorname{ran}(\upsilon\beta)$. Then $\iota\in E(\mathscr{P\!O}\!_{\infty}(\mathbb{N}^2_{\leqslant}))$ and the definition of the semigroup $\mathscr{P\!O}\!_{\infty}(\mathbb{N}^2_{\leqslant})$ implies that $\operatorname{ran}(\upsilon\beta)=\operatorname{ran}(\beta\iota)$, because $\upsilon$ is an idempotent of $\mathscr{P\!O}\!_{\infty}(\mathbb{N}^2_{\leqslant})$. Since all elements of the semigroup $\mathscr{P\!O}\!_{\infty}(\mathbb{N}^2_{\leqslant})$ are partial bijections of $\mathbb{N}\times\mathbb{N}$ we get that $\operatorname{dom}(\upsilon\beta)=\operatorname{dom}(\beta\iota)$. This implies that $(i,j)\beta\iota=(i,j)\upsilon\beta$ for each $(i,j)\in \operatorname{dom}(\beta\iota)$ and hence we get that $\alpha\varsigma=\upsilon\beta=\beta\iota$.

The proofs of equivalences $(iii)\Leftrightarrow(iv)$ and $(iv)\Leftrightarrow(v)$ are similar.

It is obvious that $\sigma$ is a reflexive and symmetric relation on $\mathscr{P\!O}\!_{\infty}(\mathbb{N}^2_{\leqslant})$. Suppose that $\alpha\sigma\beta$ and $\beta\sigma\gamma$ in $\mathscr{P\!O}\!_{\infty}(\mathbb{N}^2_{\leqslant})$. Then there exist $\varsigma,\upsilon \in E(\mathscr{P\!O}\!_{\infty}(\mathbb{N}^2_{\leqslant}))$ such that $\alpha\varsigma=\beta\varsigma$ and $\beta\upsilon=\gamma\upsilon$. This implies that $\alpha\varsigma\upsilon=\beta\varsigma\upsilon$ and $\beta\upsilon\varsigma=\gamma\upsilon\varsigma$, and since the idempotents in $\mathscr{P\!O}\!_{\infty}(\mathbb{N}^2_{\leqslant})$ commute we get that $\alpha\varsigma\upsilon=\beta\varsigma\upsilon=\beta\upsilon\varsigma=\gamma\upsilon\varsigma$, and hence $\alpha\sigma\gamma$.

Suppose that $\alpha\sigma\beta$ for some $\alpha,\beta\in\mathscr{P\!O}\!_{\infty}(\mathbb{N}^2_{\leqslant})$. Then by $(iv)$ there exists $\iota\in E(\mathscr{P\!O}\!_{\infty}(\mathbb{N}^2_{\leqslant}))$ such that $\iota\alpha=\iota\beta$. This implies that $\iota\alpha\gamma=\iota\beta\gamma$ for each $\gamma\in\mathscr{P\!O}\!_{\infty}(\mathbb{N}^2_{\leqslant})$ and hence by item $(iv)$ we get that $(\alpha\gamma)\sigma(\beta\gamma)$. The proof of the statement that $(\gamma\alpha)\sigma(\gamma\beta)$ for each $\gamma\in\mathscr{P\!O}\!_{\infty}(\mathbb{N}^2_{\leqslant})$ is similar, and hence $\sigma$ is a congruence on the semigroup $\mathscr{P\!O}\!_{\infty}(\mathbb{N}^2_{\leqslant})$.
\end{proof}


\begin{corollary}\label{corollary-2.6}
For $\alpha,\beta\in \mathscr{P\!O}\!_{\infty}(\mathbb{N}^2_{\leqslant})$ the following condition are equivalent:
\begin{itemize}
  \item[$(i)$] $\alpha\sigma\beta$;
  \item[$(ii)$] $\alpha\varpi\sigma\beta\varpi$;
  \item[$(iii)$] $\varpi\alpha\sigma\varpi\beta$.
\end{itemize}
\end{corollary}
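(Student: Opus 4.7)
The plan is to obtain the corollary as a short consequence of Proposition~\ref{proposition-2.5}. The forward implications (i)$\Rightarrow$(ii) and (i)$\Rightarrow$(iii) are immediate, since Proposition~\ref{proposition-2.5} establishes that $\sigma$ is a congruence: the substitution property at once gives $\alpha\varpi\sigma\beta\varpi$ and $\varpi\alpha\sigma\varpi\beta$ whenever $\alpha\sigma\beta$. So the real content is the two cancellation assertions (ii)$\Rightarrow$(i) and (iii)$\Rightarrow$(i), which I would prove by exploiting the fact that every element of $\mathscr{P\!O}\!_{\infty}(\mathbb{N}^2_{\leqslant})$ is an injective partial map of $\mathbb{N}\times\mathbb{N}$ with cofinite domain and image.

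For (ii)$\Rightarrow$(i), I would assume $\alpha\varpi\sigma\beta\varpi$ and use the definition of $\sigma$ to pick an idempotent $\varepsilon\in E(\mathscr{P\!O}\!_{\infty}(\mathbb{N}^2_{\leqslant}))$ with $(\alpha\varpi)\varepsilon=(\beta\varpi)\varepsilon$. Let $D=\operatorname{dom}(\alpha\varpi\varepsilon)=\operatorname{dom}(\beta\varpi\varepsilon)$. On every $(i,j)\in D$ the equality $(i,j)\alpha\varpi\varepsilon=(i,j)\beta\varpi\varepsilon$ together with the injectivity of $\varpi\varepsilon$ forces $(i,j)\alpha=(i,j)\beta$. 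Since $D$ is cut out from the cofinite set $\operatorname{dom}\alpha$ by pulling back the cofinite set $\operatorname{dom}(\varpi\varepsilon)\cap\operatorname{ran}\alpha$ under the partial bijection $\alpha$, $D$ is cofinite, so the identity map $\iota$ on $D$ is an idempotent of $\mathscr{P\!O}\!_{\infty}(\mathbb{N}^2_{\leqslant})$ with $\iota\alpha=\iota\beta$; then Proposition~\ref{proposition-2.5}(iv) gives $\alpha\sigma\beta$.

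For (iii)$\Rightarrow$(i), I would argue symmetrically on the range side. Assuming $\varpi\alpha\sigma\varpi\beta$, use Proposition~\ref{proposition-2.5}(iv) to pick $\varepsilon\in E(\mathscr{P\!O}\!_{\infty}(\mathbb{N}^2_{\leqslant}))$ with $\varepsilon(\varpi\alpha)=\varepsilon(\varpi\beta)$, and set $D=\operatorname{dom}(\varepsilon\varpi\alpha)=\operatorname{dom}(\varepsilon\varpi\beta)$ and $R=\{(i,j)\varepsilon\varpi:(i,j)\in D\}$. For every $(k,\ell)\in R$ we have $(k,\ell)\alpha=(k,\ell)\beta$. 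Since $\varepsilon\varpi$ is a partial bijection with cofinite image and $D$ is cofinite, the set $R\subseteq\operatorname{dom}\alpha\cap\operatorname{dom}\beta$ is cofinite; hence the identity $\iota'$ on $R$ lies in $E(\mathscr{P\!O}\!_{\infty}(\mathbb{N}^2_{\leqslant}))$ and satisfies $\iota'\alpha=\iota'\beta$, so Proposition~\ref{proposition-2.5}(iv) again gives $\alpha\sigma\beta$.

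The main obstacle is purely bookkeeping: ensuring at each step that the sets $D$ and $R$ one constructs are genuinely cofinite in $\mathbb{N}\times\mathbb{N}$, so that the corresponding identity maps are bona fide elements of $\mathscr{P\!O}\!_{\infty}(\mathbb{N}^2_{\leqslant})$. Once that is verified, the injectivity of the elements of $\mathscr{P\!O}\!_{\infty}(\mathbb{N}^2_{\leqslant})$ and the characterizations gathered in Proposition~\ref{proposition-2.5} do all the work.
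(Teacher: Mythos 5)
Your argument is correct, but the reverse implications are handled quite differently from the paper. For $(i)\Rightarrow(ii)$ and $(i)\Rightarrow(iii)$ you and the paper do essentially the same thing (invoke the congruence property from Proposition~\ref{proposition-2.5}). For $(ii)\Rightarrow(i)$ and $(iii)\Rightarrow(i)$, however, the paper exploits the single algebraic fact that $\varpi$ is an involution: from $\nu\alpha\varpi=\nu\beta\varpi$ it simply multiplies on the right by $\varpi$ and uses $\varpi\varpi=\mathbb{I}$ to get $\nu\alpha=\nu\beta$, so the whole cancellation is a one-line computation inside the semigroup. You instead build a fresh idempotent witness by hand: you restrict to the cofinite set where the two composites agree, use injectivity of $\varpi\varepsilon$ (resp.\ of $\varepsilon\varpi$) to transport the equality back to $\alpha$ and $\beta$, and check cofiniteness of the resulting domain so that its identity map lies in $E(\mathscr{P\!O}\!_{\infty}(\mathbb{N}^2_{\leqslant}))$. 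Your route is longer and carries the bookkeeping burden you yourself flag, but it buys more: nowhere do you use that $\varpi$ is a unit, so the same argument shows that $\alpha\gamma\,\sigma\,\beta\gamma$ (or $\gamma\alpha\,\sigma\,\gamma\beta$) implies $\alpha\sigma\beta$ for an \emph{arbitrary} $\gamma\in\mathscr{P\!O}\!_{\infty}(\mathbb{N}^2_{\leqslant})$, a genuine cancellation property of $\sigma$ of which the corollary is the special case $\gamma=\varpi$. The paper's proof is the economical one given that only $\varpi$ is needed later; yours is the more general one.
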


\begin{proof}
$(i)\Leftrightarrow (ii)$ If $\alpha\sigma\beta$ in $\mathscr{P\!O}\!_{\infty}(\mathbb{N}^2_{\leqslant})$ then by Proposition~\ref{proposition-2.5} there exists $\iota\in E(\mathscr{P\!O}\!_{\infty}(\mathbb{N}^2_{\leqslant}))$ such that $\iota\alpha=\iota\beta$. This implies that $\iota\alpha\varpi=\iota\beta\varpi$ and hence $(\alpha\varpi)\sigma(\beta\varpi)$. Conversely, if $(\alpha\varpi)\sigma(\beta\varpi)$ then by Proposition~\ref{proposition-2.5} we have that $\nu\alpha\varpi=\nu\beta\varpi$ for some $\nu\in E(\mathscr{P\!O}\!_{\infty}(\mathbb{N}^2_{\leqslant}))$, and hence
$
  \nu\alpha=\nu\alpha\varpi\varpi=\nu\beta\varpi\varpi=\nu\beta,
$
which implies that $\alpha\sigma\beta$.

The proof of $(i)\Leftrightarrow (ii)$ is similar.
\end{proof}

Also the definition of the congruence $\sigma$ on the semigroup $\mathscr{P\!O}\!_{\infty}(\mathbb{N}^2_{\leqslant})$ implies the following simple property of $\sigma$-equivalent elements of $\mathscr{P\!O}\!_{\infty}(\mathbb{N}^2_{\leqslant})$:

\begin{corollary}\label{corollary-2.6-1}
Let $\alpha,\beta$ be elements of the semigroup $\mathscr{P\!O}\!_{\infty}(\mathbb{N}^2_{\leqslant})$ such that $\alpha\sigma\beta$. Then the following assertions hold:
\begin{itemize}
  \item[$(i)$] \emph{$(\textsf{H}^{1}_{\operatorname{dom}\alpha})\alpha{\subseteq} \textsf{H}^1$} if and only if \emph{$(\textsf{H}^{1}_{\operatorname{dom}\beta})\beta{\subseteq} \textsf{H}^1$};
  \item[$(ii)$] \emph{$(\textsf{H}^{1}_{\operatorname{dom}\alpha})\alpha{\subseteq} \textsf{V}^1$} if and only if \emph{$(\textsf{H}^{1}_{\operatorname{dom}\beta})\beta{\subseteq} \textsf{V}^1$}.
\end{itemize}
\end{corollary}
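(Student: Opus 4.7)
The plan is to use Proposition~\ref{proposition-2.5}(iv) to reduce $\alpha\sigma\beta$ to the existence of an idempotent $\iota\in E(\mathscr{P\!O}\!_{\infty}(\mathbb{N}^2_{\leqslant}))$ with $\iota\alpha=\iota\beta$. Since $\iota$ acts as the identity on $\operatorname{dom}\iota$, the domain of $\iota\alpha$ is $\operatorname{dom}\iota\cap\operatorname{dom}\alpha$ and similarly for $\beta$, so the equality $\iota\alpha=\iota\beta$ yields
\begin{equation*}
  \operatorname{dom}\iota\cap\operatorname{dom}\alpha=\operatorname{dom}\iota\cap\operatorname{dom}\beta, \qquad
  (x)\alpha=(x)\beta \quad\hbox{for every}\quad x\in\operatorname{dom}\iota\cap\operatorname{dom}\alpha.
\end{equation*}
All three sets $\operatorname{dom}\iota$, $\operatorname{dom}\alpha$, $\operatorname{dom}\beta$ are cofinite in $\mathbb{N}\times\mathbb{N}$, hence the intersection $A=\operatorname{dom}\iota\cap\operatorname{dom}\alpha\cap\operatorname{dom}\beta\cap\textsf{H}^1$ is cofinite in $\textsf{H}^1$, and $\alpha$ and $\beta$ agree on $A$.

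For item~$(i)$, by symmetry it suffices to show that $(\textsf{H}^{1}_{\operatorname{dom}\alpha})\alpha\subseteq\textsf{H}^1$ implies $(\textsf{H}^{1}_{\operatorname{dom}\beta})\beta\subseteq\textsf{H}^1$. I would argue by contradiction: suppose there exists $(j_0,1)\in\textsf{H}^{1}_{\operatorname{dom}\beta}$ with $(j_0,1)\beta=(a,b)$ and $b\geqslant 2$. Monotonicity of $\beta$ together with $(j_0,1)\leqslant(j,1)$ for every $j\geqslant j_0$ yields $(a,b)\leqslant(j,1)\beta$ whenever $(j,1)\in\textsf{H}^{1}_{\operatorname{dom}\beta}$, so the second coordinate of $(j,1)\beta$ is at least $b\geqslant 2$, i.e., $(j,1)\beta\notin\textsf{H}^1$. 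Since $A$ is cofinite in $\textsf{H}^1$, there exist infinitely many $j\geqslant j_0$ with $(j,1)\in A$; for any such $j$ we have $(j,1)\beta=(j,1)\alpha\in\textsf{H}^1$ by hypothesis, contradicting $(j,1)\beta\notin\textsf{H}^1$.

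For item~$(ii)$ the argument is completely analogous with the roles of the coordinates exchanged: if $(j_0,1)\beta=(a,b)$ with $a\geqslant 2$, then monotonicity of $\beta$ forces the first coordinate of $(j,1)\beta$ to be at least $a\geqslant 2$ for all $(j,1)\in\textsf{H}^{1}_{\operatorname{dom}\beta}$ with $j\geqslant j_0$, hence $(j,1)\beta\notin\textsf{V}^1$, which again contradicts the coincidence $(j,1)\beta=(j,1)\alpha\in\textsf{V}^1$ on the cofinite set $A$. The main obstacle is purely bookkeeping---making sure the cofinite set on which $\alpha$ and $\beta$ coincide still intersects $\textsf{H}^1$ above any fixed threshold $j_0$---but the key structural input is just the combination of monotonicity of $\beta$ on the chain $\textsf{H}^1$ with the cofiniteness of $\operatorname{dom}\iota$.
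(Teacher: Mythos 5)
Your proof is correct and follows the route the paper intends: the paper states this corollary without proof as an immediate consequence of the definition of $\sigma$, and your argument (reduce to $\iota\alpha=\iota\beta$ via Proposition~\ref{proposition-2.5}$(iv)$, note that $\alpha$ and $\beta$ agree on a cofinite subset of $\textsf{H}^1$, and use monotonicity along the chain $\textsf{H}^1$ to rule out a mixed behaviour) is exactly the natural filling-in of that claim.
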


We define
\begin{equation*}
  \mathscr{P\!O}\!_{\infty}^{\,+}(\mathbb{N}^2_{\leqslant})=\left\{\alpha\in\mathscr{P\!O}\!_{\infty}(\mathbb{N}^2_{\leqslant})\colon (\textsf{H}^1_{\operatorname{dom}\alpha})\alpha\subseteq\textsf{H}^1\right\}.
\end{equation*}

Then Lemma~3 and Theorem~1 from \cite{Gutik-Pozdniakova-2016??} imply that $\mathscr{P\!O}\!_{\infty}^{\,+}(\mathbb{N}^2_{\leqslant})$ is a subsemigroup of $\mathscr{P\!O}\!_{\infty}(\mathbb{N}^2_{\leqslant})$. The subsemigroup $\mathscr{P\!O}\!_{\infty}^{\,+}(\mathbb{N}^2_{\leqslant})$ is called the \emph{monoid of orientation-preserving monotone injective partial selfmaps of} $\mathbb{N}^{2}_{\leqslant}$ with cofinite domains and images.  Moreover it is obvious that $E(\mathscr{P\!O}\!_{\infty}^{\,+}(\mathbb{N}^2_{\leqslant})= E(\mathscr{P\!O}\!_{\infty}(\mathbb{N}^2_{\leqslant}))$. Also, later by $\preccurlyeq$ and $\sigma$ we denote the corresponding induced relations of the relations $\preccurlyeq$ and $\sigma$ from the semigroup $\mathscr{P\!O}\!_{\infty}(\mathbb{N}^2_{\leqslant})$ onto its subsemigroup $\mathscr{P\!O}\!_{\infty}^{\,+}(\mathbb{N}^2_{\leqslant})$.

The proofs of the following propositions are similar to those of Propositions~\ref{proposition-2.3} and \ref{proposition-2.5}, respectively.

\begin{proposition}\label{proposition-2.7}
The relation $\preccurlyeq$ is the natural partial order on the semigroup $\mathscr{P\!O}\!_{\infty}^{\,+}(\mathbb{N}^2_{\leqslant})$.
\end{proposition}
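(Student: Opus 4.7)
The plan is to mirror the proof of Proposition~\ref{proposition-2.3} essentially verbatim, with the single new ingredient being the coincidence of bands
\[
E\bigl(\mathscr{P\!O}\!_{\infty}^{\,+}(\mathbb{N}^2_{\leqslant})\bigr)=E\bigl(\mathscr{P\!O}\!_{\infty}(\mathbb{N}^2_{\leqslant})\bigr)
\]
recorded just before the statement. That coincidence guarantees that every idempotent produced in the earlier argument already lies inside the subsemigroup and may therefore be invoked without modification.

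Concretely, I would assume $\alpha=\beta\varepsilon$ for some $\varepsilon\in E(\mathscr{P\!O}\!_{\infty}^{\,+}(\mathbb{N}^2_{\leqslant}))$. The identity $\varepsilon^{2}=\varepsilon$ immediately gives $\alpha\varepsilon=\beta\varepsilon\varepsilon=\beta\varepsilon=\alpha$, which supplies the relation $ay=a$ of Proposition~\ref{proposition-2.2}\,(iii) with $y=\varepsilon$. Next I would introduce the partial identity $\iota\colon\operatorname{dom}(\beta\varepsilon)\to\operatorname{dom}(\beta\varepsilon)$: it is an idempotent of the ambient semigroup, hence, by the equality of bands, also an idempotent of $\mathscr{P\!O}\!_{\infty}^{\,+}(\mathbb{N}^2_{\leqslant})$. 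Repeating the short domain-and-value comparison from the proof of Proposition~\ref{proposition-2.3}, I obtain $\operatorname{dom}(\iota\beta)=\operatorname{dom}(\beta\varepsilon)$ together with $(i,j)\iota\beta=(i,j)\beta\varepsilon$ on this common domain, whence $\alpha=\iota\beta$. Since $\iota$ is the identity on $\operatorname{dom}\alpha$, we also have $\iota\alpha=\alpha$. Proposition~\ref{proposition-2.2}\,(iii), applied with $x=\iota$ and $y=\varepsilon$, then places $\alpha$ below $\beta$ in the Mitsch natural partial order of $\mathscr{P\!O}\!_{\infty}^{\,+}(\mathbb{N}^2_{\leqslant})$. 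The reverse implication runs symmetrically: from Mitsch-type data $\alpha=xb=by$ with $xa=ay=a$, the same recipe as in Proposition~\ref{proposition-2.3} yields an idempotent $\varepsilon$ of the ambient semigroup such that $\alpha=\beta\varepsilon$, and the coincidence of bands places $\varepsilon$ inside the subsemigroup.

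I do not expect any real obstacle: the only point at which the argument could in principle leave $\mathscr{P\!O}\!_{\infty}^{\,+}(\mathbb{N}^2_{\leqslant})$ is the construction of the auxiliary idempotent $\iota$, and that risk is precisely blocked by the equality $E(\mathscr{P\!O}\!_{\infty}^{\,+}(\mathbb{N}^2_{\leqslant}))=E(\mathscr{P\!O}\!_{\infty}(\mathbb{N}^2_{\leqslant}))$. All the delicate partial-function bookkeeping has already been discharged in the proof of Proposition~\ref{proposition-2.3}, so the present proof should reduce to a single bookkeeping remark plus a direct transcription of that earlier argument.
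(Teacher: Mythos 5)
Your proposal is correct and is exactly what the paper intends: the paper's own ``proof'' of Proposition~\ref{proposition-2.7} is just the remark that the argument of Proposition~\ref{proposition-2.3} carries over, and you reproduce that argument while making explicit the one point that needs checking, namely that the auxiliary partial identity $\iota$ stays inside $\mathscr{P\!O}\!_{\infty}^{\,+}(\mathbb{N}^2_{\leqslant})$ because $E(\mathscr{P\!O}\!_{\infty}^{\,+}(\mathbb{N}^2_{\leqslant}))=E(\mathscr{P\!O}\!_{\infty}(\mathbb{N}^2_{\leqslant}))$.
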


\begin{proposition}\label{proposition-2.8}
The relation $\sigma$ is a congruence on the semigroup $\mathscr{P\!O}\!_{\infty}^{\,+}(\mathbb{N}^2_{\leqslant})$.
\end{proposition}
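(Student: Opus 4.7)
The plan is to mirror the proof of Proposition~\ref{proposition-2.5} verbatim inside the subsemigroup, leaning on the identity $E(\mathscr{P\!O}\!_{\infty}^{\,+}(\mathbb{N}^2_{\leqslant}))=E(\mathscr{P\!O}\!_{\infty}(\mathbb{N}^2_{\leqslant}))$ stated immediately before the proposition. Concretely, I would first reproduce the five-way equivalence $(i)\Leftrightarrow\cdots\Leftrightarrow(v)$ of Proposition~\ref{proposition-2.5} with the ambient semigroup replaced by $\mathscr{P\!O}\!_{\infty}^{\,+}(\mathbb{N}^2_{\leqslant})$. Each step transfers without modification: the implications $(ii)\Rightarrow(i)$ and its analogues use only that idempotents commute, which holds in $\mathscr{P\!O}\!_{\infty}^{\,+}(\mathbb{N}^2_{\leqslant})$ because its band coincides with that of $\mathscr{P\!O}\!_{\infty}(\mathbb{N}^2_{\leqslant})$ (a subsemilattice of $E(\mathscr{I}_\omega)$); the implications $(ii)\Rightarrow(iii)$ and $(iii)\Rightarrow(ii)$ rely on replacing a product $\beta\upsilon$ or $\upsilon\beta$ by $\iota\beta$ or $\beta\iota$ where $\iota$ is the identity map on $\operatorname{dom}(\beta\upsilon)$ or $\operatorname{ran}(\upsilon\beta)$, and such an $\iota$ sits in $E(\mathscr{P\!O}\!_{\infty}^{\,+}(\mathbb{N}^2_{\leqslant}))$ precisely because the band is unchanged; finally, $(iii)\Leftrightarrow(iv)\Leftrightarrow(v)$ use in addition that elements are partial bijections of $\mathbb{N}\times\mathbb{N}$, which is still true in the subsemigroup.

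Next I would verify that $\sigma$ restricted to $\mathscr{P\!O}\!_{\infty}^{\,+}(\mathbb{N}^2_{\leqslant})$ is an equivalence relation: reflexivity and symmetry are immediate from the definition, while transitivity follows the same line as in Proposition~\ref{proposition-2.5}, namely if $\alpha\varsigma=\beta\varsigma$ and $\beta\upsilon=\gamma\upsilon$ for $\varsigma,\upsilon\in E(\mathscr{P\!O}\!_{\infty}^{\,+}(\mathbb{N}^2_{\leqslant}))$, then $\alpha(\varsigma\upsilon)=\beta(\varsigma\upsilon)=\beta(\upsilon\varsigma)=\gamma(\upsilon\varsigma)=\gamma(\varsigma\upsilon)$ via commutativity of idempotents, so $\alpha\sigma\gamma$. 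For compatibility with multiplication, I would use characterization $(iv)$: given $\alpha\sigma\beta$, pick $\iota\in E(\mathscr{P\!O}\!_{\infty}^{\,+}(\mathbb{N}^2_{\leqslant}))$ with $\iota\alpha=\iota\beta$; then for any $\gamma\in\mathscr{P\!O}\!_{\infty}^{\,+}(\mathbb{N}^2_{\leqslant})$ we have $\iota(\alpha\gamma)=\iota(\beta\gamma)$, hence $(\alpha\gamma)\sigma(\beta\gamma)$, and the symmetric argument using characterization $(v)$ yields left compatibility.

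The main conceptual obstacle is not a calculation but the bookkeeping point that every idempotent invoked in the argument must actually live in the subsemigroup $\mathscr{P\!O}\!_{\infty}^{\,+}(\mathbb{N}^2_{\leqslant})$ and not only in the ambient $\mathscr{P\!O}\!_{\infty}(\mathbb{N}^2_{\leqslant})$. This is precisely the content of the equality $E(\mathscr{P\!O}\!_{\infty}^{\,+}(\mathbb{N}^2_{\leqslant}))=E(\mathscr{P\!O}\!_{\infty}(\mathbb{N}^2_{\leqslant}))$ recorded in the paragraph preceding the proposition, and once it is invoked every step of the proof of Proposition~\ref{proposition-2.5} carries over unchanged. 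I would therefore, in the actual write-up, simply cite Proposition~\ref{proposition-2.5} and remark that, because the bands of the two semigroups agree and $\mathscr{P\!O}\!_{\infty}^{\,+}(\mathbb{N}^2_{\leqslant})$ is closed under the operations used there, the relation $\sigma$ induced from $\mathscr{P\!O}\!_{\infty}(\mathbb{N}^2_{\leqslant})$ is the same as the one defined intrinsically on $\mathscr{P\!O}\!_{\infty}^{\,+}(\mathbb{N}^2_{\leqslant})$ and is a congruence.
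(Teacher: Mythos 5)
Your proposal is correct and matches the paper's intent exactly: the paper gives no separate proof, stating only that the argument is similar to that of Proposition~\ref{proposition-2.5}, and the adaptation you describe --- rerunning that proof inside $\mathscr{P\!O}\!_{\infty}^{\,+}(\mathbb{N}^2_{\leqslant})$ and using $E(\mathscr{P\!O}\!_{\infty}^{\,+}(\mathbb{N}^2_{\leqslant}))=E(\mathscr{P\!O}\!_{\infty}(\mathbb{N}^2_{\leqslant}))$ to keep every witnessing idempotent in the subsemigroup --- is precisely what is needed. Your closing observation that the induced and intrinsic relations coincide is a useful extra check, not a deviation.
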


By $\varpi$ we denote the bijective transformation of $\mathbb{N}\times\mathbb{N}$ defined by the formula $(i,j)\varpi=(j,i)$, for any $(i,j)\in\mathbb{N}\times\mathbb{N}$. It is obvious that $\varpi$ is an element of the semigroup $\mathscr{P\!O}\!_{\infty}(\mathbb{N}^2_{\leqslant})$ and $\varpi\varpi=\mathbb{I}$.

\begin{remark}\label{remark-2.9}
We observe that
\begin{itemize}
  \item[$(i)$] $\alpha\in\mathscr{P\!O}\!_{\infty}^{\,+}(\mathbb{N}^2_{\leqslant})$ if and only if $\alpha\varpi,\varpi\alpha\in \mathscr{P\!O}\!_{\infty}(\mathbb{N}^2_{\leqslant})\setminus \mathscr{P\!O}\!_{\infty}^{\,+}(\mathbb{N}^2_{\leqslant})$;
  \item[$(ii)$] $\alpha\in\mathscr{P\!O}\!_{\infty}^{\,+}(\mathbb{N}^2_{\leqslant})$ if and only if $\varpi\alpha\varpi\in \mathscr{P\!O}\!_{\infty}^{\,+}(\mathbb{N}^2_{\leqslant})$.
\end{itemize}
\end{remark}

We define a map $\mathfrak{h}\colon \mathscr{P\!O}\!_{\infty}(\mathbb{N}^2_{\leqslant})\to \mathscr{P\!O}\!_{\infty}(\mathbb{N}^2_{\leqslant})$ by the formula $(\alpha)\mathfrak{h}=\varpi\alpha\varpi$, for $\alpha\in\mathscr{P\!O}\!_{\infty}(\mathbb{N}^2_{\leqslant})$.

\begin{proposition}\label{proposition-2.10}
The map $\mathfrak{h}\colon \mathscr{P\!O}\!_{\infty}(\mathbb{N}^2_{\leqslant})\to \mathscr{P\!O}\!_{\infty}(\mathbb{N}^2_{\leqslant})$ is an automorphism of the semigroup $\mathscr{P\!O}\!_{\infty}(\mathbb{N}^2_{\leqslant})$. Moreover its restriction $\mathfrak{h}|_{\mathscr{P\!O}\!_{\infty}^{\,+}(\mathbb{N}^2_{\leqslant})}\colon \mathscr{P\!O}\!_{\infty}^{\,+}(\mathbb{N}^2_{\leqslant})\to \mathscr{P\!O}\!_{\infty}^{\,+}(\mathbb{N}^2_{\leqslant})$ is an automorphism of the subsemigroup $\mathscr{P\!O}\!_{\infty}^{\,+}(\mathbb{N}^2_{\leqslant})$.
\end{proposition}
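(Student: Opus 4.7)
The plan is to exploit the identity $\varpi\varpi=\mathbb{I}$, which makes $\varpi$ a self-inverse element of the semigroup $\mathscr{P\!O}\!_{\infty}(\mathbb{N}^2_{\leqslant})$, so that conjugation by $\varpi$ is essentially a group-theoretic inner automorphism (with $\varpi$ serving as its own inverse). The verification splits into three routine checks: well-definedness, homomorphism property, and bijectivity; the extension to the subsemigroup will then be immediate from Remark~\ref{remark-2.9}(ii).

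First I would observe that $\mathfrak{h}$ is well-defined, because $\varpi\in\mathscr{P\!O}\!_{\infty}(\mathbb{N}^2_{\leqslant})$ and hence the product $\varpi\alpha\varpi$ lies in the semigroup. Next, for any $\alpha,\beta\in\mathscr{P\!O}\!_{\infty}(\mathbb{N}^2_{\leqslant})$, the homomorphism property follows from inserting $\varpi\varpi=\mathbb{I}$ in the middle:
\begin{equation*}
(\alpha\beta)\mathfrak{h}=\varpi(\alpha\beta)\varpi=\varpi\alpha(\varpi\varpi)\beta\varpi=(\varpi\alpha\varpi)(\varpi\beta\varpi)=(\alpha)\mathfrak{h}\cdot(\beta)\mathfrak{h}.
\end{equation*}
Bijectivity is immediate by showing that $\mathfrak{h}$ is its own inverse: $((\alpha)\mathfrak{h})\mathfrak{h}=\varpi(\varpi\alpha\varpi)\varpi=(\varpi\varpi)\alpha(\varpi\varpi)=\alpha$, so $\mathfrak{h}\circ\mathfrak{h}$ is the identity map on $\mathscr{P\!O}\!_{\infty}(\mathbb{N}^2_{\leqslant})$, which forces $\mathfrak{h}$ to be bijective. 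Combined with the preceding step, this shows that $\mathfrak{h}$ is an automorphism of $\mathscr{P\!O}\!_{\infty}(\mathbb{N}^2_{\leqslant})$.

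For the restriction statement, I would invoke Remark~\ref{remark-2.9}(ii), which asserts exactly that $\alpha\in\mathscr{P\!O}\!_{\infty}^{\,+}(\mathbb{N}^2_{\leqslant})$ if and only if $\varpi\alpha\varpi\in\mathscr{P\!O}\!_{\infty}^{\,+}(\mathbb{N}^2_{\leqslant})$. This means $(\alpha)\mathfrak{h}\in\mathscr{P\!O}\!_{\infty}^{\,+}(\mathbb{N}^2_{\leqslant})$ whenever $\alpha\in\mathscr{P\!O}\!_{\infty}^{\,+}(\mathbb{N}^2_{\leqslant})$, so $\mathfrak{h}$ maps the subsemigroup into itself; since $\mathfrak{h}$ is an involution, the same remark applied to $(\alpha)\mathfrak{h}$ yields surjectivity of the restriction. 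The homomorphism property is inherited from $\mathfrak{h}$ on the whole semigroup, completing the proof.

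There is essentially no obstacle here: everything is controlled by the involution identity $\varpi\varpi=\mathbb{I}$ together with the already-stated Remark~\ref{remark-2.9}(ii). If any step warrants attention it is only the confirmation that $\varpi\in\mathscr{P\!O}\!_{\infty}(\mathbb{N}^2_{\leqslant})$ (ensuring $\mathfrak{h}$ maps into the right semigroup), but this is recorded explicitly in the definition of $\varpi$ preceding the proposition.
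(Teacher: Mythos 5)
Your proposal is correct and follows essentially the same route as the paper: the homomorphism property via inserting $\varpi\varpi=\mathbb{I}$, bijectivity from $\mathfrak{h}$ being an involution (the paper verifies surjectivity and injectivity separately but both by the same identity), and Remark~\ref{remark-2.9}(ii) for the restriction to $\mathscr{P\!O}\!_{\infty}^{\,+}(\mathbb{N}^2_{\leqslant})$.
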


\begin{proof}
First we show that $\mathfrak{h}\colon \mathscr{P\!O}\!_{\infty}(\mathbb{N}^2_{\leqslant})\to \mathscr{P\!O}\!_{\infty}(\mathbb{N}^2_{\leqslant})$ is a homomorphism. Fix arbitrary $\alpha,\beta\in\mathscr{P\!O}\!_{\infty}(\mathbb{N}^2_{\leqslant})$. Then we have that
\begin{equation*}
  (\alpha\beta)\mathfrak{h}=\varpi(\alpha\beta)\varpi=\varpi(\alpha\mathbb{I}\beta)\varpi=\varpi(\alpha\varpi\varpi\beta)\varpi =(\varpi\alpha\varpi)(\varpi\beta\varpi)=(\alpha)\mathfrak{h}(\beta)\mathfrak{h},
\end{equation*}
and hence $\mathfrak{h}\colon \mathscr{P\!O}\!_{\infty}(\mathbb{N}^2_{\leqslant})\to \mathscr{P\!O}\!_{\infty}(\mathbb{N}^2_{\leqslant})$ is a homomorphism.

Fix an arbitrary $\alpha\in\mathscr{P\!O}\!_{\infty}(\mathbb{N}^2_{\leqslant})$. Then the definition of $\mathfrak{h}$ implies that
\begin{equation*}
(\varpi\alpha\varpi)\mathfrak{h}=\varpi\varpi\alpha\varpi\varpi=\mathbb{I}\alpha\mathbb{I}=\alpha,
\end{equation*}
and hence the map $\mathfrak{h}\colon \mathscr{P\!O}\!_{\infty}(\mathbb{N}^2_{\leqslant})\to \mathscr{P\!O}\!_{\infty}(\mathbb{N}^2_{\leqslant})$ is surjective. Suppose that $(\alpha)\mathfrak{h}=(\beta)\mathfrak{h}$ for some $\alpha,\beta\in\mathscr{P\!O}\!_{\infty}(\mathbb{N}^2_{\leqslant})$. Then
\begin{equation*}
  \alpha=\mathbb{I}\alpha\mathbb{I}=\varpi\varpi\alpha\varpi\varpi=((\alpha)\mathfrak{h})\mathfrak{h}=((\beta)\mathfrak{h})\mathfrak{h}= \varpi\varpi\beta\varpi\varpi=\mathbb{I}\beta\mathbb{I}=\beta,
\end{equation*}
and hence the map $\mathfrak{h}\colon \mathscr{P\!O}\!_{\infty}(\mathbb{N}^2_{\leqslant})\to \mathscr{P\!O}\!_{\infty}(\mathbb{N}^2_{\leqslant})$ is injective. Thus the map $\mathfrak{h}$ is an automorphism of the semigroup $\mathscr{P\!O}\!_{\infty}(\mathbb{N}^2_{\leqslant})$.

Now, Remark~\ref{remark-2.9} implies that the restriction $\mathfrak{h}|_{\mathscr{P\!O}\!_{\infty}^{\,+}(\mathbb{N}^2_{\leqslant})}\colon \mathscr{P\!O}\!_{\infty}^{\,+}(\mathbb{N}^2_{\leqslant})\to \mathscr{P\!O}\!_{\infty}^{\,+}(\mathbb{N}^2_{\leqslant})$ is an automorphism of the semigroup $\mathscr{P\!O}\!_{\infty}^{\,+}(\mathbb{N}^2_{\leqslant})$, too.
\end{proof}

For the automorphism $\mathfrak{h}\colon \mathscr{P\!O}\!_{\infty}^{\,+}(\mathbb{N}^2_{\leqslant})\to \mathscr{P\!O}\!_{\infty}^{\,+}(\mathbb{N}^2_{\leqslant})$ of the semigroup $\mathscr{P\!O}\!_{\infty}^{\,+}(\mathbb{N}^2_{\leqslant})$ we have that $\mathfrak{h}^2=\operatorname{\textsf{Id}}_{\mathscr{P\!O}\!_{\infty}^{\,+}(\mathbb{N}^2_{\leqslant})}$ is the identity automorphism of $\mathscr{P\!O}\!_{\infty}^{\,+}(\mathbb{N}^2_{\leqslant})$. This implies that the element $\mathfrak{h}$ generates the group which is isomorphic to the cyclic group of order two $\mathbb{Z}_2$. By Proposition~4 from \cite{Gutik-Pozdniakova-2016??} the group of units $H(\mathbb{I})$ of the semigroup $\mathscr{P\!O}\!_{\infty}(\mathbb{N}^2_{\leqslant})$ is isomorphic to $\mathbb{Z}_2$. We define a map $\mathfrak{Q}$ from $H(\mathbb{I})$ into the group $\operatorname{Aut}(\mathscr{P\!O}\!_{\infty}^{\,+}(\mathbb{N}^2_{\leqslant}))$ of automorphisms of the semigroup $\mathscr{P\!O}\!_{\infty}^{\,+}(\mathbb{N}^2_{\leqslant})$ in the following way $(\mathbb{I})\mathfrak{Q}=\operatorname{\textsf{Id}}_{\mathscr{P\!O}\!_{\infty}^{\,+}(\mathbb{N}^2_{\leqslant})}$ and $(\varpi)\mathfrak{Q}=\mathfrak{h}$. It is obvious that so defined map $\mathfrak{Q}\colon H(\mathbb{I})\to\operatorname{Aut}(\mathscr{P\!O}\!_{\infty}^{\,+}(\mathbb{N}^2_{\leqslant}))$ is an injective homomorphism.

Let $S$ and $T$ be semigroups and let $\mathfrak{H}$ be a homomorphism from $T$ into the semigroup of endomorphisms $\operatorname{\textsf{End}}(S)$ of $S$, $\mathfrak{H}\colon t\mapsto \mathfrak{h}_t$. Then the Cartesian product $S\times T$ with the following semigroup operation
\begin{equation*}
  \left(s_1,t_1\right)\cdot \left(s_2,t_2\right)=\left(s_1\cdot(s_2)\mathfrak{h}_{t_1},t_1\cdot t_2\right), \qquad s_1,s_2\in S, \; t_1,t_2\in T,
\end{equation*}
is called a \emph{semidirect product} of the semigroup $S$ by $T$ and is denoted by $S\rtimes_\mathfrak{H}T$. We remark that if $1_T$ is the unit of the semigroup $T$ then $(1_T)\mathfrak{H}=\mathfrak{h}_{1_T}$ is the identity homomorphism of $S$ and in the case when $T$ is a group then $(t)\mathfrak{H}=\mathfrak{h}_{t}$ is an automorphism of $S$ for any $t\in T$.

\begin{theorem}\label{theorem-2.11}
The semigroup $\mathscr{P\!O}\!_{\infty}(\mathbb{N}^2_{\leqslant})$ is isomorphic to the semidirect product $\mathscr{P\!O}\!_{\infty}^{\,+}(\mathbb{N}^2_{\leqslant})\rtimes_{\mathfrak{Q}} H(\mathbb{I})$ of the semigroup $\mathscr{P\!O}\!_{\infty}^{\,+}(\mathbb{N}^2_{\leqslant})$ by the group $H(\mathbb{I})$.
\end{theorem}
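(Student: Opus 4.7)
The plan is to define an explicit map
\begin{equation*}
\Phi\colon \mathscr{P\!O}\!_{\infty}(\mathbb{N}^2_{\leqslant})\to \mathscr{P\!O}\!_{\infty}^{\,+}(\mathbb{N}^2_{\leqslant})\rtimes_{\mathfrak{Q}}H(\mathbb{I})
\end{equation*}
piecewise, sending $\alpha$ to $(\alpha,\mathbb{I})$ when $\alpha\in\mathscr{P\!O}\!_{\infty}^{\,+}(\mathbb{N}^2_{\leqslant})$ and to $(\alpha\varpi,\varpi)$ otherwise. Remark~\ref{remark-2.9}(i) guarantees that $\alpha\varpi\in\mathscr{P\!O}\!_{\infty}^{\,+}(\mathbb{N}^2_{\leqslant})$ in the second case, so $\Phi$ is well-defined, and the same remark exhibits $\mathscr{P\!O}\!_{\infty}(\mathbb{N}^2_{\leqslant})$ as the disjoint union $\mathscr{P\!O}\!_{\infty}^{\,+}(\mathbb{N}^2_{\leqslant})\sqcup \mathscr{P\!O}\!_{\infty}^{\,+}(\mathbb{N}^2_{\leqslant})\varpi$, from which bijectivity is immediate via the explicit inverse $(s,\mathbb{I})\mapsto s$, $(s,\varpi)\mapsto s\varpi$.

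The substantive content is the homomorphism identity $\Phi(\alpha\beta)=\Phi(\alpha)\Phi(\beta)$. I would first prove a short parity lemma: $\alpha\beta\in\mathscr{P\!O}\!_{\infty}^{\,+}(\mathbb{N}^2_{\leqslant})$ if and only if $\alpha$ and $\beta$ lie on the same side of the partition. The case of both factors in $\mathscr{P\!O}\!_{\infty}^{\,+}$ is the subsemigroup statement noted immediately after the definition of $\mathscr{P\!O}\!_{\infty}^{\,+}$. The case of both factors outside follows by writing $\alpha=\alpha'\varpi$ and $\beta=\beta'\varpi$ with $\alpha',\beta'\in\mathscr{P\!O}\!_{\infty}^{\,+}$ and using Remark~\ref{remark-2.9}(ii) to conclude $\varpi\beta'\varpi\in\mathscr{P\!O}\!_{\infty}^{\,+}$, so that $\alpha\beta=\alpha'(\varpi\beta'\varpi)$ stays inside $\mathscr{P\!O}\!_{\infty}^{\,+}$. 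Each mixed case is treated by the same two-step manoeuvre, closed off by one application of Remark~\ref{remark-2.9}(i) to conclude $\alpha\beta\notin\mathscr{P\!O}\!_{\infty}^{\,+}$.

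Granting the parity lemma, the homomorphism property reduces to a four-case computation using the semidirect product rule $(s_1,t_1)(s_2,t_2)=(s_1\cdot\mathfrak{h}_{t_1}(s_2),t_1t_2)$ together with $(\gamma)\mathfrak{h}_\varpi=\varpi\gamma\varpi$ and $\varpi^2=\mathbb{I}$. As a representative sample, when $\alpha,\beta\notin\mathscr{P\!O}\!_{\infty}^{\,+}(\mathbb{N}^2_{\leqslant})$ one has, by the parity lemma, $\alpha\beta\in\mathscr{P\!O}\!_{\infty}^{\,+}(\mathbb{N}^2_{\leqslant})$, and
\begin{equation*}
\Phi(\alpha)\Phi(\beta)=(\alpha\varpi,\varpi)(\beta\varpi,\varpi)=\bigl(\alpha\varpi\cdot\varpi(\beta\varpi)\varpi,\mathbb{I}\bigr)=(\alpha\beta,\mathbb{I})=\Phi(\alpha\beta);
\end{equation*}
the remaining three cases are entirely analogous and each collapses by a single cancellation of $\varpi^2=\mathbb{I}$.

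The main obstacle is purely bookkeeping: keeping the four cases disciplined and invoking Remarks~\ref{remark-2.9}(i) and \ref{remark-2.9}(ii) in the right order when proving the parity lemma. There is no conceptual difficulty, because Remark~\ref{remark-2.9} already encapsulates the $\mathbb{Z}_2$-symmetry that drives the semidirect product decomposition, and the automorphism $\mathfrak{h}$ inside $\mathfrak{Q}$ is precisely the conjugation by $\varpi$ that allows the second coordinate of $\Phi$ to be extracted cleanly.
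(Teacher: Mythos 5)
Your proof is correct, and it is essentially the paper's argument run in the opposite direction: the paper defines the inverse of your $\Phi$, namely $\mathfrak{I}\colon(\alpha,g)\mapsto\alpha g$ from $\mathscr{P\!O}\!_{\infty}^{\,+}(\mathbb{N}^2_{\leqslant})\rtimes_{\mathfrak{Q}} H(\mathbb{I})$ into $\mathscr{P\!O}\!_{\infty}(\mathbb{N}^2_{\leqslant})$. The choice of direction redistributes the work in a way worth noting. In the paper's direction the homomorphism identity is a purely formal two-line computation from the semidirect product rule together with $g^2=\mathbb{I}$, with no case analysis at all, and the effort sits in bijectivity: surjectivity is the decomposition $\alpha=\alpha^{+}g_\alpha$, and injectivity uses the cited Lemma~3 to force $g_\alpha=g_\beta$ from $\alpha^{+}g_\alpha=\beta^{+}g_\beta$. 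In your direction bijectivity is immediate from the disjoint union $\mathscr{P\!O}\!_{\infty}(\mathbb{N}^2_{\leqslant})=\mathscr{P\!O}\!_{\infty}^{\,+}(\mathbb{N}^2_{\leqslant})\sqcup\mathscr{P\!O}\!_{\infty}^{\,+}(\mathbb{N}^2_{\leqslant})\varpi$, but the homomorphism identity costs you the parity lemma plus a four-case verification; note that the parity lemma is precisely what the paper obtains for free once $\mathfrak{I}$ is known to be a bijective homomorphism. Your parity argument is sound (Remark~\ref{remark-2.9} does supply both that $\alpha\varpi\in\mathscr{P\!O}\!_{\infty}^{\,+}(\mathbb{N}^2_{\leqslant})$ whenever $\alpha\notin\mathscr{P\!O}\!_{\infty}^{\,+}(\mathbb{N}^2_{\leqslant})$, by applying part $(i)$ to $\alpha\varpi$, and the closure facts you need), and your sample computation collapsing $\varpi^2=\mathbb{I}$ is right, so the argument goes through; but if you want to avoid the case analysis entirely, verifying instead that your explicit inverse is a homomorphism --- the paper's route --- is strictly less work.
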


\begin{proof}
We define a map $\mathfrak{I}\colon\mathscr{P\!O}\!_{\infty}^{\,+}(\mathbb{N}^2_{\leqslant})\rtimes_{\mathfrak{Q}} H(\mathbb{I})\to \mathscr{P\!O}\!_{\infty}(\mathbb{N}^2_{\leqslant})$ by the formula $(\alpha,g)\mathfrak{I}=\alpha g$. Then for all $\alpha_1,\alpha_2\in \mathscr{P\!O}\!_{\infty}^{\,+}(\mathbb{N}^2_{\leqslant})$ and $g_1,g_2\in H(\mathbb{I})$ we have that
\begin{equation*}
\begin{split}
  \left((\alpha_1,g_1)\cdot(\alpha_2,g_2)\right)\mathfrak{I}&= \left(\alpha_1\cdot(\alpha_2)(g_1)\mathfrak{Q},g_1\cdot g_2\right)\mathfrak{I}= \left(\alpha_1\cdot g_1\cdot\alpha_2\cdot g_1,g_1\cdot g_2\right)\mathfrak{I}=\\
    &= \alpha_1\cdot g_1\cdot\alpha_2\cdot g_1 \cdot g_1 \cdot g_2=  \alpha_1\cdot g_1\cdot \alpha_2\cdot g_2=\\
    &= (\alpha_1,g_1)\mathfrak{I}\cdot (\alpha_2,g_2)\mathfrak{I},
\end{split}
\end{equation*}
because $g^2=\mathbb{I}$ for any $g\in H(\mathbb{I})$, and hence the map $\mathfrak{I}\colon\mathscr{P\!O}\!_{\infty}^{\,+}(\mathbb{N}^2_{\leqslant})\rtimes_{\mathfrak{Q}} H(\mathbb{I})\to \mathscr{P\!O}\!_{\infty}(\mathbb{N}^2_{\leqslant})$ is a homomorphism.

By Lemma~3 from \cite{Gutik-Pozdniakova-2016??} for every $\alpha\in \mathscr{P\!O}\!_{\infty}(\mathbb{N}^2_{\leqslant})$ there exist $\alpha^+\in \mathscr{P\!O}\!_{\infty}^{\,+}(\mathbb{N}^2_{\leqslant})$ and $g_\alpha\in H(\mathbb{I})$ such that $\alpha=\alpha^+g_\alpha$. Indeed,
\begin{itemize}
  \item[$(a)$] in the case when $(\textsf{H}^1_{\operatorname{dom}\alpha})\alpha\subseteq\textsf{H}^1$ we put $\alpha^+=\alpha$ and $g_\alpha=\mathbb{I}$;
  \item[$(b)$] in the case when $(\textsf{H}^1_{\operatorname{dom}\alpha})\alpha\subseteq\textsf{V}^1$ we put $\alpha^+=\alpha\omega$ and $g_\alpha=\omega$.
\end{itemize}

Let $\alpha^+,\beta^+\in \mathscr{P\!O}\!_{\infty}^{\,+}(\mathbb{N}^2_{\leqslant})$ and $g_\alpha,g_\beta\in H(\mathbb{I})$ be such that $\alpha^+g_\alpha=(\alpha^+,g_\alpha)\mathfrak{I}=(\beta^+,g_\beta)\mathfrak{I}=\beta^+g_\beta$. Since $(\textsf{H}^1_{\operatorname{dom}\alpha^+})\alpha^+\subseteq\textsf{H}^1$ and $(\textsf{H}^1_{\operatorname{dom}\beta^+})\beta^+\subseteq\textsf{H}^1$, Lemma~3 from \cite{Gutik-Pozdniakova-2016??} implies that $g_\alpha=g_\beta$. By Proposition~4 from \cite{Gutik-Pozdniakova-2016??} the group of units $H(\mathbb{I})$ of the semigroup $\mathscr{P\!O}\!_{\infty}(\mathbb{N}^2_{\leqslant})$ is isomorphic to $\mathbb{Z}_2$ and hence   $\alpha^+=\alpha^+g_\alpha^2=\alpha^+g_\alpha g_\beta=\beta^+g_\beta^2=\beta^+$. Therefore, we get that so defined map   $\mathfrak{I}\colon\mathscr{P\!O}\!_{\infty}^{\,+}(\mathbb{N}^2_{\leqslant})\rtimes_{\mathfrak{Q}} H(\mathbb{I})\to \mathscr{P\!O}\!_{\infty}(\mathbb{N}^2_{\leqslant})$ is an isomorphism.
\end{proof}

By Theorem~2($ii_1$) from \cite{Gutik-Pozdniakova-2016??} for every $\alpha\in \mathscr{P\!O}\!_{\infty}^{\,+}(\mathbb{N}^2_{\leqslant})$ there exists a smallest positive integer $n_{\alpha}$ such that $(i,j)\alpha=(i,j)$ for each $(i,j)\in\operatorname{dom}\alpha\cap{\uparrow}(n_{\alpha},n_{\alpha})$.

\begin{lemma}\label{lemma-2.12}
For every $\alpha\in \mathscr{P\!O}\!_{\infty}^{\,+}(\mathbb{N}^2_{\leqslant})$ there exists \emph{$\alpha_{\texttt{f}}\in \mathscr{P\!O}\!_{\infty}^{\,+}(\mathbb{N}^2_{\leqslant})$} such that the following assertions hold:
\begin{itemize}
   \item[$(i)$] \emph{$\alpha\sigma\alpha_{\texttt{f}}$};
   \item[$(ii)$] \emph{$(i+1)_{\alpha_{\texttt{f}}[*,j]}-(i+1)=i_{\alpha_{\texttt{f}}[*,j]}-i$} for arbitrary \emph{$(i,j)\in \operatorname{dom}\alpha_{\texttt{f}}$} with $j<n_{\alpha}$, \emph{$(i,j)\alpha_{\texttt{f}}=\big(i_{\alpha_{\texttt{f}}[*,j]},j_{\alpha_{\texttt{f}}[i,*]}\big)$} and \emph{$(i+1,j)\alpha_{\texttt{f}}=\big((i+1)_{\alpha_{\texttt{f}}[*,j]},j_{\alpha_{\texttt{f}}[i+1,*]}\big)$}, i.e., \emph{$\alpha_{\texttt{f}}$} acts as a partial shift on the set \emph{$\textsf{H}^j$};
   \item[$(iii)$] \emph{$(j+1)_{\alpha_{\texttt{f}}[i,*]}-(j+1)=j_{\alpha_{\texttt{f}}[i,*]}-j$} for arbitrary \emph{$(i,j)\in \operatorname{dom}\alpha_{\texttt{f}}$} with $i<n_{\alpha}$, \emph{$(i,j)\alpha_{\texttt{f}}=\big(i_{\alpha_{\texttt{f}}[*,j]},j_{\alpha_{\texttt{f}}[i,*]}\big)$} and \emph{$(i,j+1)\alpha_{\texttt{f}}=\big(i_{\alpha_{\texttt{f}}[*,j+1]},(j+1)_{\alpha_{\texttt{f}}[i,*]}\big)$}, i.e., \emph{$\alpha_{\texttt{f}}$} acts as a partial shift on the set \emph{$\textsf{V}^i$}.
\end{itemize}
Moreover, there exist smallest positive integers $\widehat{h}_\alpha,\widehat{v}_\alpha\leqslant n_\alpha$ such that \emph{$(i,j)\alpha_{\texttt{f}}=(i,j)$} for arbitrary \emph{$(i,j)\in\operatorname{dom}\alpha_{\texttt{f}}$} with $i\geqslant \widehat{h}_\alpha$ and \emph{$(k,l)\alpha_{\texttt{f}}=(k,l)$} for arbitrary \emph{$(k,l)\in\operatorname{dom}\alpha_{\texttt{f}}$} with $l\geqslant \widehat{v}_\alpha$.
\end{lemma}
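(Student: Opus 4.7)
The plan is to exploit Theorem~2$(ii_1)$ of~\cite{Gutik-Pozdniakova-2016??} to localise the non-identity action of $\alpha$ to the L-shaped region $\mathbb{N}^{2}\setminus\uparrow(n_{\alpha},n_{\alpha})$, which is the union of the finitely many rows $\textsf{H}^{j}$ with $j<n_{\alpha}$ and columns $\textsf{V}^{i}$ with $i<n_{\alpha}$. The construction of $\alpha_{\texttt{f}}$ will stabilise the asymptotic row- and column-shifts of $\alpha$ into honest constant shifts by modifying $\alpha$ on a finite subset of this L-shape, which is permitted by Proposition~\ref{proposition-2.5}$(iv)$ because $\sigma$-equivalence is exactly agreement modulo a cofinite identity idempotent.

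First, by combining the identity behaviour of $\alpha$ on $\uparrow(n_{\alpha},n_{\alpha})\cap\operatorname{dom}\alpha$ with monotonicity applied to the pair $(i,j)\leqslant(i,n_{\alpha})$ and with the orientation-preserving condition of $\mathscr{P\!O}\!_{\infty}^{\,+}(\mathbb{N}^{2}_{\leqslant})$, I would show that for every $j<n_{\alpha}$ there exist an integer $c_{j}^{h}$ and a threshold $M_{j}^{h}$ with $(i,j)\alpha=(i+c_{j}^{h},j)$ for every $(i,j)\in\operatorname{dom}\alpha$ with $i\geqslant M_{j}^{h}$; symmetrically, for every $i<n_{\alpha}$ there exist $c_{i}^{v}$ and $M_{i}^{v}$ giving the constant column shift. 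The underlying point is that the restriction of $\alpha$ to $\textsf{H}^{j}\cap\operatorname{dom}\alpha$ is eventually an order-preserving bijection onto $\textsf{H}^{j}\cap\operatorname{ran}\alpha$, and such a bijection between two cofinite subsets of $\mathbb{N}$ is eventually a translation; an analogous statement holds column-wise.

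I would then define $\alpha_{\texttt{f}}$ by leaving it equal to $\alpha$ on the stable tails of every row and every column and on $\uparrow(n_{\alpha},n_{\alpha})\cap\operatorname{dom}\alpha$, and redefining it inside the corner square $[1,n_{\alpha})\times[1,n_{\alpha})$ by propagating the constant shifts $c_{j}^{h}$ and $c_{i}^{v}$ as far inward as is compatible with monotonicity and injectivity, removing the finitely many bad points from $\operatorname{dom}\alpha_{\texttt{f}}$. By construction $\alpha_{\texttt{f}}\in\mathscr{P\!O}\!_{\infty}^{\,+}(\mathbb{N}^{2}_{\leqslant})$, conditions (ii) and (iii) hold, and $\alpha,\alpha_{\texttt{f}}$ agree on a cofinite subset of $\mathbb{N}\times\mathbb{N}$, so Proposition~\ref{proposition-2.5}$(iv)$ applied to the identity idempotent on that set delivers~(i). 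The "moreover" part falls out of the construction: outside the corner $[1,n_{\alpha})\times[1,n_{\alpha})$ the map $\alpha_{\texttt{f}}$ coincides with the identity, so the smallest integers $\widehat{h}_{\alpha},\widehat{v}_{\alpha}$ for which the stated fixing property holds automatically satisfy $\widehat{h}_{\alpha},\widehat{v}_{\alpha}\leqslant n_{\alpha}$.

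The principal obstacle is the combinatorial case analysis inside the corner square: one has to verify that propagating the row shifts and the column shifts simultaneously inwards can be carried out without violating monotonicity or injectivity, and that the finitely many potential conflicts can all be resolved by excising points from $\operatorname{dom}\alpha_{\texttt{f}}$ while keeping the domain and image cofinite. This is where the fine structural properties of the elements of $\mathscr{P\!O}\!_{\infty}^{\,+}(\mathbb{N}^{2}_{\leqslant})$ established in~\cite{Gutik-Pozdniakova-2016??} are indispensable, in particular to control the interaction between the row-shift pattern and the column-shift pattern at their common intersection points $(i,j)$ with $i<n_\alpha$ and $j<n_\alpha$.
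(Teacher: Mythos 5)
Your first step (each low row $\textsf{H}^{j}$, $j<n_{\alpha}$, and each low column $\textsf{V}^{i}$, $i<n_{\alpha}$, is eventually mapped by $\alpha$ as a constant translation of its tail, because the tail of the range fills the whole tail of the line and a monotone injection between cofinite subsets of $\mathbb{N}$ is eventually a shift) is exactly the paper's preparation, and your identification of $\sigma$-equivalence with agreement after multiplying by a cofinite identity idempotent is also correct. The gap is in the construction of $\alpha_{\texttt{f}}$ itself. You propose to \emph{redefine} $\alpha$ inside the corner square $[1,n_{\alpha})\times[1,n_{\alpha})$ by ``propagating the row shifts and the column shifts inward,'' and you defer the compatibility of the two propagations to a ``principal obstacle.'' That obstacle is not a technical case analysis to be worked out --- it is insurmountable as posed: a point $(i,j)$ with $i,j<n_{\alpha}$ would have to be sent simultaneously to $(i+c^{h}_{j},j)$ (to continue the row shift) and to $(i,j+c^{v}_{i})$ (to continue the column shift), which is impossible unless both shift constants vanish; and they do not vanish in general, for otherwise every element would be $\sigma$-equivalent to $\mathbb{I}$ and the quotient in Theorem~\ref{theorem-2.15} would be trivial. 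So no assignment of values in the corner can work, and the only resolution is excision of the entire problematic region --- at which point your construction collapses to something you have not actually described.

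The paper's proof avoids all of this by never changing a single value of $\alpha$: it sets $\alpha_{\texttt{f}}=\alpha|_{D_{\alpha}}$, where $D_{\alpha}$ deletes the finite set $\{(i,j)\colon i\leqslant\bar{h}_{\alpha},\ j\leqslant n_{\alpha}\}\cup\{(i,j)\colon i\leqslant n_{\alpha},\ j\leqslant\bar{v}_{\alpha}\}$ with $\bar{h}_{\alpha},\bar{v}_{\alpha}$ the maxima of the row and column stability thresholds. Then $\alpha_{\texttt{f}}=\varepsilon_{\alpha}\alpha$ for the identity idempotent $\varepsilon_{\alpha}$ on $D_{\alpha}$, giving $(i)$ at once, and conditions $(ii)$ and $(iii)$ hold because every point of $\operatorname{dom}\alpha_{\texttt{f}}$ lying on a low row or low column already sits in the stable tail, so there is nothing left to ``propagate.'' Note also that your justification of the final clause is wrong: outside the corner square $\alpha_{\texttt{f}}$ is \emph{not} the identity --- on the tails of the low rows and columns it is a genuinely nontrivial shift (these shifts are precisely what produce the generators $\gamma_{k}$, $\upsilon_{l}$ in Lemma~\ref{lemma-2.13}); the ``moreover'' clause concerns only the lines on which the shift constant happens to be zero, and $\widehat{h}_{\alpha},\widehat{v}_{\alpha}\leqslant n_{\alpha}$ because all lines of index at least $n_{\alpha}$ are eventually pointwise fixed by the definition of $n_{\alpha}$.
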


\begin{proof}
Fix an arbitrary element $\alpha$ of the semigroup $\mathscr{P\!O}\!_{\infty}^{\,+}(\mathbb{N}^2_{\leqslant})$. Then by Theorem~1(1) from  \cite{Gutik-Pozdniakova-2016??} we get that  $(\textsf{H}^{n}_{\operatorname{dom}\alpha})\alpha{\subseteq^*} \textsf{H}^n$ and $(\textsf{V}^{n}_{\operatorname{dom}\alpha})\alpha{\subseteq^*} \textsf{V}^n$ for any positive integer $n$. Also, the definition of the semigroup
$\mathscr{P\!O}\!_{\infty}(\mathbb{N}^2_{\leqslant})$ and  Theorem~2($ii_1$) of \cite{Gutik-Pozdniakova-2016??} imply that there exists a smallest positive integer $n_{\alpha}$ such that $(i,j)\alpha=(i,j)$ for each $(i,j)\in\operatorname{dom}\alpha\cap{\uparrow}(n_{\alpha},n_{\alpha})$, and hence for arbitrary positive integers $i,j<n_\alpha$ there exist smallest positive integers $h^i_\alpha$ and $v^j_\alpha$ such that the following conditions hold:
\begin{equation*}
\begin{split}
   & \textsf{H}^{i}_{\operatorname{ran}\alpha}\cap\left\{(p,i)\colon p\geqslant h^i_\alpha\right\}=\left\{(p,i)\colon p\geqslant h^i_\alpha\right\};\\
   & \textsf{V}^{j}_{\operatorname{ran}\alpha}\cap\left\{(j,q)\colon q\geqslant v^j_\alpha\right\}=\left\{(j,q)\colon q\geqslant v^j_\alpha\right\},
\end{split}
\end{equation*}
and
\begin{equation*}
(k,i), (j,l)\in \operatorname{dom}\alpha, \qquad  (k,i)\alpha\in \textsf{H}^i, \qquad (j,l)\alpha\in \textsf{V}^j,
\end{equation*}
for all positive integers $k\geqslant h^i_\alpha$ and $l\geqslant v^j_\alpha$.

We put
\begin{equation*}
  \bar{h}_\alpha=\max\left\{h^i_\alpha\colon i=1,\ldots,n_\alpha-1\right\} \qquad \hbox{and} \qquad \bar{v}_\alpha=\max\left\{v^j_\alpha\colon j=1,\ldots,n_\alpha-1\right\}.
\end{equation*}
The above arguments imply that
\begin{equation}\label{eq-2.1}
   \textsf{H}^{i}_{\operatorname{ran}\alpha}\cap\left\{(p,i)\colon p\geqslant\bar{h}_\alpha\right\}=\left\{(p,i)\colon p\geqslant\bar{h}_\alpha\right\};\\
\end{equation}
\begin{equation}\label{eq-2.2}
\textsf{V}^{j}_{\operatorname{ran}\alpha}\cap\left\{(j,q)\colon q\geqslant\bar{v}_\alpha\right\}=\left\{(j,q)\colon q\geqslant\bar{v}_\alpha\right\},
\end{equation}
and
\begin{equation*}
(k,i), (j,l)\in \operatorname{dom}\alpha, \qquad  (k,i)\alpha\in \textsf{H}^i, \qquad (j,l)\alpha\in \textsf{V}^j,
\end{equation*}
for all positive integers $k\geqslant\bar{h}_\alpha$ and $l\geqslant\bar{v}_\alpha$.

Next we put
\begin{equation}\label{eq-2.2a}
  D_\alpha=\left(\mathbb{N}\times\mathbb{N}\right)\setminus \left(\left\{(i,j)\colon i\leqslant \bar{h}_\alpha \hbox{~and~} j\leqslant n_\alpha \right\}\cup
 \left\{(i,j)\colon i\leqslant n_\alpha \hbox{~and~} j\leqslant\bar{v}_\alpha \right\}\right).
\end{equation}

We define $\alpha_{\!\texttt{f}}=\alpha|_{D_\alpha}$, i.e.,
\begin{equation*}
\operatorname{dom}\alpha_{\!\texttt{f}}=D_\alpha, \qquad \operatorname{ran}\alpha_{\!\texttt{f}}=(D_\alpha)\alpha \qquad \hbox{and} \qquad  (i,j)\alpha_{\!\texttt{f}}=(i,j)\alpha \quad \hbox{for all} \quad (i,j)\in \operatorname{dom}\alpha_{\!\texttt{f}}.
\end{equation*}
Since $\alpha_{\!\texttt{f}}=\varepsilon_\alpha\alpha_{\!\texttt{f}}=\varepsilon_\alpha\alpha$ for the identity partial map $\varepsilon_\alpha\colon \mathbb{N}\times\mathbb{N}\rightharpoonup\mathbb{N}\times\mathbb{N}$ with $\operatorname{dom}\varepsilon_\alpha=\operatorname{ran}\varepsilon_\alpha= D_\alpha$, Proposition~\ref{proposition-2.5} implies that $\alpha\sigma\alpha_{\!\texttt{f}}$.

Then condition (\ref{eq-2.1}) and the definition of the positive integer $\bar{h}_\alpha$ imply that
\begin{equation*}
\big(\bar{h}_\alpha+2\big)_{\alpha_{\!\texttt{f}}[*,1]}=\big(\bar{h}_\alpha+1\big)_{\alpha_{\!\texttt{f}}[*,1]}+1,
\end{equation*}
and by similar arguments and induction we have that $(i+1)_{\alpha_{\!\texttt{f}}[*,1]}=(i,1)_{\alpha_{\!\texttt{f}}[*,1]}+1$ for arbitrary $i\geqslant \bar{h}_\alpha+1$. Next, if we apply condition (\ref{eq-2.1}) and induction for arbitrary $j<n_\alpha$ then we get that $(i+1)_{\alpha_{\!\texttt{f}}[*,j]}=(i)_{\alpha_{\!\texttt{f}}[*,j]}+1$ for arbitrary $i\geqslant \bar{h}_\alpha+1$. This implies assertion $(ii)$.

The proof of item $(iii)$ is similar to $(ii)$.

The last statement of the lemma follows from the above arguments and Theorem~2($1$) from \cite{Gutik-Pozdniakova-2016??}.
\end{proof}

For every positive integer $n$ we define partial maps $\gamma_n\colon \mathbb{N}\times\mathbb{N}\rightharpoonup \mathbb{N}\times\mathbb{N}$ and $\upsilon_n\colon \mathbb{N}\times\mathbb{N}\rightharpoonup \mathbb{N}\times\mathbb{N}$ in the following way:
\begin{equation*}
\begin{array}{l}
  \operatorname{dom}\gamma_n=\mathbb{N}\times\mathbb{N}\setminus\left\{(1,i)\colon i=1,\ldots,n\right\}, \\
  \operatorname{dom}\upsilon_n=\mathbb{N}\times\mathbb{N}\setminus\left\{(i,1)\colon i=1,\ldots,n\right\},  \\
  \operatorname{ran}\gamma_n=\operatorname{ran}\upsilon_n=\mathbb{N}\times\mathbb{N}
\end{array}
\end{equation*}
and
\begin{equation*}
\begin{array}{l}
  (i,j)\gamma_n=
\left\{
  \begin{array}{ll}
    (i-1,j), & \hbox{if}~j\leqslant n; \\
    (i,j),   & \hbox{if}~j>n
  \end{array}
\right.
\qquad \hbox{for} \quad (i,j)\in\operatorname{dom}\gamma_n,\\ \\
  (i,j)\upsilon_n=
\left\{
  \begin{array}{ll}
    (i,j-1), & \hbox{if}~i\leqslant n; \\
    (i,j),   & \hbox{if}~i>n
  \end{array}
\right.
\qquad \hbox{for} \quad (i,j)\in\operatorname{dom}\upsilon_n.
\end{array}
\end{equation*}

Simple verifications show that $\gamma_n,\upsilon_n\in \mathscr{P\!O}\!_{\infty}^{\,+}(\mathbb{N}^2_{\leqslant})$ for every positive integer $n$, and moreover the subsemigroups $\left\langle\gamma_k\mid k\in\mathbb{N}\right\rangle$ and $\left\langle\upsilon_k\mid k\in\mathbb{N}\right\rangle$ of the semigroup $\mathscr{P\!O}\!_{\infty}^{\,+}(\mathbb{N}^2_{\leqslant})$, generated by the sets $\left\{\gamma_k\colon k\in\mathbb{N}\right\}$ and $\left\{\upsilon_k\colon k\in\mathbb{N}\right\}$, respectively, are isomorphic to the free Abelian semigroup over an infinite countable set.

\begin{lemma}\label{lemma-2.13}
For every $\alpha\in \mathscr{P\!O}\!_{\infty}^{\,+}(\mathbb{N}^2_{\leqslant})$ there exist finitely many elements $\gamma_{k_1},\ldots,\gamma_{k_i}$ and $\upsilon_{l_1},\ldots,\upsilon_{l_j}$ of the semigroup $\mathscr{P\!O}\!_{\infty}^{\,+}(\mathbb{N}^2_{\leqslant})$, with $k_1<\ldots<k_i$, $l_1<\ldots<l_j$, such that
\begin{equation}\label{eq-2.3}
\alpha\sigma\big(\gamma_{k_1}^{p_1}\ldots\gamma_{k_i}^{p_i}\upsilon_{l_1}^{q_1}\ldots\upsilon_{l_j}^{q_j}\big),
\end{equation}
for some positive integers $p_1,\ldots,p_i,q_1,\ldots,q_j$. Moreover if
\begin{equation*}
\alpha\sigma\big(\gamma_{k_1}^{p_1}\ldots\gamma_{k_i}^{p_i}\upsilon_{l_1}^{q_1}\ldots\upsilon_{l_j}^{q_j}\big) \qquad \hbox{and} \qquad
\beta\sigma\big(\gamma_{a_1}^{b_1}\ldots\gamma_{a_i}^{b_i}\upsilon_{c_1}^{d_1}\ldots\upsilon_{c_j}^{d_j}\big)
\end{equation*}
for some $\alpha,\beta\in \mathscr{P\!O}\!_{\infty}^{\,+}(\mathbb{N}^2_{\leqslant})$ then $(\alpha,\beta)\notin\sigma$ if and only if
\begin{equation*}
\iota\gamma_{k_1}^{p_1}\ldots\gamma_{k_i}^{p_i}\upsilon_{l_1}^{q_1}\ldots\upsilon_{l_j}^{q_j}\neq \iota\gamma_{a_1}^{b_1}\ldots\gamma_{a_i}^{b_i}\upsilon_{c_1}^{d_1}\ldots\upsilon_{c_j}^{d_j}
\end{equation*}
for any idempotent $\iota\in\mathscr{P\!O}\!_{\infty}^{\,+}(\mathbb{N}^2_{\leqslant})$.
\end{lemma}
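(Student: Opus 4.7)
The plan is to invoke Lemma~\ref{lemma-2.12} to reduce $\alpha$ to its ``shift normal form'' $\alpha_{\texttt{f}}$, extract the row- and column-shift profiles of $\alpha_{\texttt{f}}$, and then realise these profiles as a product of the generators $\gamma_k$ and $\upsilon_l$ modulo $\sigma$. The second assertion will then follow from the transitivity of $\sigma$ together with Proposition~\ref{proposition-2.5}$(iv)$.

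First I would apply Lemma~\ref{lemma-2.12} to produce $\alpha_{\texttt{f}}\in\mathscr{P\!O}\!_{\infty}^{\,+}(\mathbb{N}^{2}_{\leqslant})$ with $\alpha\sigma\alpha_{\texttt{f}}$. By assertion $(ii)$ of that lemma, for each $m<\widehat{v}_\alpha$ the map $\alpha_{\texttt{f}}$ acts as a horizontal shift $(i,m)\mapsto(i-s_m,m)$ on the cofinite part of $\textsf{H}^m\cap\operatorname{dom}\alpha_{\texttt{f}}$ for some non-negative integer $s_m$; set $s_m=0$ for $m\geqslant\widehat{v}_\alpha$. Analogously, by assertion $(iii)$, define column shifts $t_n$ with $t_n=0$ for $n\geqslant\widehat{h}_\alpha$. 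Applying the order-preservation of $\alpha_{\texttt{f}}$ to $(i,m)\leqslant(i,m+1)$ for $i$ large forces $s_m\geqslant s_{m+1}$, and by symmetry $t_n\geqslant t_{n+1}$; both sequences are thus non-increasing and eventually zero.

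Next, let $k_1<\ldots<k_i$ enumerate those indices $m$ for which $s_m>s_{m+1}$, and set $p_t:=s_{k_t}-s_{k_t+1}>0$; analogously define $l_1<\ldots<l_j$ and $q_r:=t_{l_r}-t_{l_r+1}>0$. Since $\gamma_{k_t}^{p_t}$ shifts each of $\textsf{H}^1,\ldots,\textsf{H}^{k_t}$ by $p_t$ and fixes all higher rows, the composite $\gamma_{k_1}^{p_1}\cdots\gamma_{k_i}^{p_i}$ shifts $\textsf{H}^m$ by $\sum_{t:\,k_t\geqslant m}p_t=s_m$, with the analogous statement holding for the $\upsilon$'s on columns. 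A direct inspection then confirms that the element
\begin{equation*}
\chi := \gamma_{k_1}^{p_1}\cdots\gamma_{k_i}^{p_i}\upsilon_{l_1}^{q_1}\cdots\upsilon_{l_j}^{q_j}
\end{equation*}
agrees with $\alpha_{\texttt{f}}$ outside a finite corner (where both coordinates are small and the row- and column-shifts interfere). Choosing $\iota$ to be the identity idempotent on this cofinite agreement set yields $\iota\chi=\iota\alpha_{\texttt{f}}$, so $\chi\sigma\alpha_{\texttt{f}}\sigma\alpha$ by Proposition~\ref{proposition-2.5}$(iv)$, proving (\ref{eq-2.3}).

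The second assertion follows by transitivity: $(\alpha,\beta)\in\sigma$ if and only if the two canonical products are $\sigma$-related, which by Proposition~\ref{proposition-2.5}$(iv)$ is equivalent to the existence of some idempotent $\iota$ equating them after left multiplication; negating yields the stated characterisation. The principal obstacle is the verification in the third paragraph that $\chi$ realises the shift profile of $\alpha_{\texttt{f}}$ up to a finite discrepancy: one must track how applying a $\upsilon_l$ after a $\gamma_k$ perturbs the first coordinate of the intermediate point, and appeal to the bounds $\widehat{h}_\alpha,\widehat{v}_\alpha$ from Lemma~\ref{lemma-2.12} to confirm that these perturbations generate only finitely many disagreements with $\alpha_{\texttt{f}}$, safely absorbed by $\iota$.
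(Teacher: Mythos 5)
Your proposal is correct and follows essentially the same route as the paper: reduce $\alpha$ to $\alpha_{\texttt{f}}$ via Lemma~\ref{lemma-2.12}, read off the (non-increasing, eventually zero) row and column shift profiles, take their successive differences as the exponents $p_t,q_r$, check that the resulting product of $\gamma$'s and $\upsilon$'s agrees with $\alpha_{\texttt{f}}$ on a cofinite set and absorb the finite discrepancy into an idempotent, then deduce the second assertion from Proposition~\ref{proposition-2.5}. The only differences are cosmetic (you enumerate the indices where the shift profile drops rather than allowing zero exponents and discarding them afterwards, and your roles of $\widehat{h}_\alpha$ and $\widehat{v}_\alpha$ are interchanged relative to the paper's), and the final ``direct inspection'' you defer is exactly the step the paper likewise leaves as a simple verification.
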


\begin{proof}
Fix an arbitrary element $\alpha$ of the semigroup $\mathscr{P\!O}\!_{\infty}^{\,+}(\mathbb{N}^2_{\leqslant})$. Let $\alpha_{\!\texttt{f}}$ be the element of $\mathscr{P\!O}\!_{\infty}^{\,+}(\mathbb{N}^2_{\leqslant})$ defined in the proof of Lemma~\ref{lemma-2.12}. By Theorem~3 from \cite{Gutik-Pozdniakova-2016??} and the second statement of Lemma~\ref{lemma-2.12} there exist smallest positive integers $\widehat{h}_\alpha,\widehat{v}_\alpha\leqslant n_\alpha$ such that $(i,j)\alpha_{\!\texttt{f}}=(i,j)$ for arbitrary $(i,j)\in\operatorname{dom}\alpha_{\!\texttt{f}}$ with $i\geqslant \widehat{h}_\alpha$ and $(k,l)\alpha_{\!\texttt{f}}=(k,l)$ for arbitrary $(k,l)\in\operatorname{dom}\alpha_{\!\texttt{f}}$ with $l\geqslant \widehat{v}_\alpha$.

By Lemma~\ref{lemma-2.12} and Theorem~1(1) of \cite{Gutik-Pozdniakova-2016??} we have that
\begin{equation*}
\big(j,\widehat{h}_\alpha-1\big)\alpha_{\!\texttt{f}}=\big(j_{\alpha_{\!\texttt{f}}[*,\widehat{h}_\alpha-1]},\widehat{h}_\alpha-1\big)<(j,\widehat{h}_\alpha-1) \quad \hbox{and} \quad (j+1)_{\alpha_{\!\texttt{f}}[*,\widehat{h}_\alpha-1]}-j_{\alpha_{\!\texttt{f}}[*,\widehat{h}_\alpha-1]}=1,
\end{equation*}
for arbitrary $\big(j,\widehat{h}_\alpha-1\big), \big(j+1,\widehat{h}_\alpha-1\big)\in\operatorname{dom}\alpha_{\!\texttt{f}}$. Then we put $p_{\widehat{h}_\alpha-1}=j-j_{\alpha_{\!\texttt{f}}[*,\widehat{h}_\alpha-1]}$. Next, for $s=2,\ldots,\widehat{h}_\alpha-2$ we define integers $p_{\widehat{h}_\alpha-s},\ldots,p_{1}$ by induction,
\begin{equation*}
p_{\widehat{h}_\alpha-s}= j-j_{\alpha_{\!\texttt{f}}[*,\widehat{h}_\alpha-s]}-\left(p_{\widehat{h}_\alpha-1}+\ldots+p_{\widehat{h}_\alpha-s+1}\right),
\end{equation*}
where $\big(j,\widehat{h}_\alpha-s\big)\alpha_{\!\texttt{f}}=
\big(j_{\alpha_{\!\texttt{f}}[*,\widehat{h}_\alpha-s]},\widehat{h}_\alpha-s\big)\leqslant(j,\widehat{h}_\alpha-s)$ for arbitrary $\big(j,\widehat{h}_\alpha-s\big)\in\operatorname{dom}\alpha_{\!\texttt{f}}$.

Similarly, by Lemma~\ref{lemma-2.12} and Theorem~1(1) of \cite{Gutik-Pozdniakova-2016??} we have that
\begin{equation*}
\big(\widehat{v}_\alpha-1, i\big)\alpha_{\!\texttt{f}}= \big(\widehat{v}_\alpha-1,i_{\alpha_{\!\texttt{f}}[\widehat{v}_\alpha-1,*]}\big)<(\widehat{v}_\alpha-1,i) \quad \hbox{and} \quad (i+1)_{\alpha_{\!\texttt{f}}[\widehat{v}_\alpha-1,*]}-i_{\alpha_{\!\texttt{f}}[\widehat{v}_\alpha-1,*]}=1,
\end{equation*}
for arbitrary $\big(\widehat{v}_\alpha-1, i\big), \big(\widehat{v}_\alpha-1, i+1\big)\in\operatorname{dom}\alpha_{\!\texttt{f}}$. Then we put $q_{\widehat{v}_\alpha-1}=i-i_{\alpha_{\!\texttt{f}}[\widehat{v}_\alpha-1,*]}$. Next, for $t=2,\ldots,\widehat{v}_\alpha-2$ we define integers $q_{\widehat{v}_\alpha-t},\ldots,q_{1}$ by induction
\begin{equation*}
q_{\widehat{v}_\alpha-t}= i-i_{\alpha_{\!\texttt{f}}[\widehat{v}_\alpha-t,*]}-\left(q_{\widehat{v}_\alpha-1}+\ldots+q_{\widehat{v}_\alpha-t+1}\right),
\end{equation*}
where $\big(\widehat{v}_\alpha-t,i\big)\alpha_{\!\texttt{f}}= \big(\widehat{v}_\alpha-t,i_{\alpha_{\!\texttt{f}}[\widehat{v}_\alpha-t,*]}\big)\leqslant\big(\widehat{v}_\alpha-t, i\big)$ for arbitrary $\big(\widehat{v}_\alpha-t,i\big)\in\operatorname{dom}\alpha_{\!\texttt{f}}$.

For any $\alpha\in\mathscr{P\!O}\!_{\infty}^{\,+}(\mathbb{N}^2_{\leqslant})$ put $\varepsilon_\alpha\colon \mathbb{N}\times\mathbb{N}$ be the identity partial map with $\operatorname{dom}\varepsilon_\alpha=\operatorname{ran}\varepsilon_\alpha= D_\alpha$, where the set $D_\alpha$ is defined by formula (\ref{eq-2.2a}).
Simple verification shows that $\varepsilon_\alpha\alpha= \varepsilon_\alpha\big(\gamma_{1}^{p_1}\ldots\gamma_{\widehat{h}_\alpha-1}^{p_{\widehat{h}_\alpha-1}} \upsilon_{1}^{q_1}\ldots\upsilon_{l_j}^{q_{\widehat{v}_\alpha-1}}\big)$ and hence
\begin{equation*}
\alpha\sigma\big(\gamma_{1}^{p_1}\ldots\gamma_{\widehat{h}_\alpha-1}^{p_{\widehat{h}_\alpha-1}} \upsilon_{1}^{q_1}\ldots\upsilon_{l_j}^{q_{\widehat{v}_\alpha-1}}\big),
\end{equation*}
which implies that relation (\ref{eq-2.3}) holds.

Since $\gamma^0_{m}=\upsilon^0_{m}=\mathbb{I}$ for any positive integer $m$, without loss of generality we may assume that $p_1,\ldots,{p_i},q_1,\ldots,q_j$ are positive integers in formula (\ref{eq-2.3}).

Also, the last statement of the lemma follows from the definition of the congruence $\sigma$ on the semigroup $\mathscr{P\!O}\!_{\infty}^{\,+}(\mathbb{N}^2_{\leqslant})$.
\end{proof}

\begin{lemma}\label{lemma-2.14}
Let be $\alpha\sigma\big(\gamma_{k_1}^{p_1}\ldots\gamma_{k_i}^{p_i}\upsilon_{l_1}^{q_1}\ldots\upsilon_{l_j}^{q_j}\big)$ for $\alpha\in \mathscr{P\!O}\!_{\infty}^{\,+}(\mathbb{N}^2_{\leqslant})$ and positive integers $p_1,\ldots,p_i$, $q_1,\ldots,q_j$, $k_1<\ldots<k_i$, $l_1<\ldots<l_j$. Then there exists an idempotent
$\widehat{\varepsilon}_\alpha\in\mathscr{P\!O}\!_{\infty}^{\,+}(\mathbb{N}^2_{\leqslant})$ such that
\begin{equation*}
  \widehat{\varepsilon}_\alpha\alpha= \widehat{\varepsilon}_\alpha\gamma_{k_1}^{p_1}\ldots\gamma_{k_i}^{p_i}\upsilon_{l_1}^{q_1}\ldots\upsilon_{l_j}^{q_j}=
  \widehat{\varepsilon}_\alpha\upsilon_{l_1}^{q_1}\ldots\upsilon_{l_j}^{q_j}\gamma_{k_1}^{p_1}\ldots\gamma_{k_i}^{p_i}.
\end{equation*}
\end{lemma}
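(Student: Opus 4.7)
The plan is to combine two separate observations, each yielding an idempotent, and then glue them with the fact that the band of $\mathscr{P\!O}\!_{\infty}^{\,+}(\mathbb{N}^2_{\leqslant})$ is commutative (since it embeds in $\mathscr{I}_{\mathbb{N}\times\mathbb{N}}$).

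First, I would invoke Proposition~\ref{proposition-2.5}$(iv)$ applied to the hypothesis $\alpha\sigma A$, where $A=\gamma_{k_1}^{p_1}\cdots\gamma_{k_i}^{p_i}\upsilon_{l_1}^{q_1}\cdots\upsilon_{l_j}^{q_j}$, to produce an idempotent $\iota_1\in E(\mathscr{P\!O}\!_{\infty}^{\,+}(\mathbb{N}^2_{\leqslant}))$ with $\iota_1\alpha=\iota_1 A$. This takes care of the first desired equality up to multiplying by an idempotent.

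The technical heart is to show that $A$ and $B:=\upsilon_{l_1}^{q_1}\cdots\upsilon_{l_j}^{q_j}\gamma_{k_1}^{p_1}\cdots\gamma_{k_i}^{p_i}$ agree on a cofinite subset of $\mathbb{N}\times\mathbb{N}$. From the definitions, $\gamma_n$ acts as the identity on $\{(x,y):y>n\}$, and $\upsilon_n$ acts as the identity on $\{(x,y):x>n\}$. Set $M=\max\{k_i,l_j\}+\sum_{a}p_a+\sum_{b}q_b$. On $\{(x,y):y>M\}$ every $\gamma_{k_a}^{p_a}$ acts trivially (its action preserves $y$, and $y$ stays above all $k_a$), so both $A$ and $B$ reduce to $\upsilon_{l_1}^{q_1}\cdots\upsilon_{l_j}^{q_j}$ on this set. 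Symmetrically, on $\{(x,y):x>M\}$ every $\upsilon_{l_b}^{q_b}$ is trivial and both $A$ and $B$ reduce to $\gamma_{k_1}^{p_1}\cdots\gamma_{k_i}^{p_i}$. The union of these two regions is cofinite in $\mathbb{N}\times\mathbb{N}$. Let $\iota_2$ be the identity partial map on $(\operatorname{dom}A\cap\operatorname{dom}B)\cap\bigl(\{(x,y):x>M\}\cup\{(x,y):y>M\}\bigr)$; this set is cofinite, so $\iota_2\in E(\mathscr{P\!O}\!_{\infty}^{\,+}(\mathbb{N}^2_{\leqslant}))$, and by construction $\iota_2 A=\iota_2 B$.

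Finally, put $\widehat{\varepsilon}_\alpha=\iota_1\iota_2$. Because idempotents of $\mathscr{P\!O}\!_{\infty}^{\,+}(\mathbb{N}^2_{\leqslant})$ commute, $\widehat{\varepsilon}_\alpha$ is an idempotent, and
\begin{equation*}
\widehat{\varepsilon}_\alpha\alpha=\iota_2(\iota_1\alpha)=\iota_2(\iota_1 A)=\iota_1(\iota_2 A)=\iota_1(\iota_2 B)=\widehat{\varepsilon}_\alpha B,
\end{equation*}
and trivially $\widehat{\varepsilon}_\alpha A=\iota_1(\iota_2 A)=\iota_1(\iota_2 B)=\widehat{\varepsilon}_\alpha B$, giving the required triple equality.

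The main obstacle I expect is the cofinite-agreement claim for $A$ and $B$: one must be careful that the intermediate images produced by the iterated shifts stay inside the relevant domains, so the chosen threshold $M$ has to absorb not only the $k_a$ and $l_b$ but also the total shift amounts $\sum p_a$ and $\sum q_b$. Once that bookkeeping is done correctly, the rest is formal manipulation using commutativity of idempotents.
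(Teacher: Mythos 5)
Your proposal is correct and is essentially the paper's argument: the paper likewise takes $\widehat{\varepsilon}_\alpha$ to be the identity partial map off a square $\{(i,j)\colon i,j\leqslant\overline{m}_\alpha\}$ whose side $\overline{m}_\alpha$ absorbs the indices and the total shift amounts $\sum p_a+\sum q_b$, which is exactly the complement of your union of half-planes $\{x>M\}\cup\{y>M\}$. The only cosmetic difference is that you factor the idempotent as $\iota_1\iota_2$ (getting $\iota_1$ abstractly from Proposition~\ref{proposition-2.5}$(iv)$ and $\iota_2$ from the explicit cofinite-agreement bookkeeping, then using commutativity of idempotents), whereas the paper uses a single idempotent below the $\varepsilon_\alpha$ of Lemma~\ref{lemma-2.12}; your write-up actually supplies more detail for the commutation step $\widehat{\varepsilon}_\alpha A=\widehat{\varepsilon}_\alpha B$ than the paper does.
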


\begin{proof}
Put
\begin{equation*}
\overline{m}_\alpha=n_\alpha+\bar{h}_\alpha+\bar{v}_\alpha+p_1+\ldots+p_i+q_1+\ldots+q_j,
\end{equation*}
where $\bar{h}_\alpha$ and $\bar{v}_\alpha$ are the positive integers defined in the proof of Lemma~\ref{lemma-2.12}. We define the identity partial map $\widehat{\varepsilon}_\alpha\colon \mathbb{N}\times\mathbb{N}\rightharpoonup\mathbb{N}\times\mathbb{N}$ with $\operatorname{dom}\widehat{\varepsilon}_\alpha=\operatorname{ran}\widehat{\varepsilon}_\alpha=M_\alpha$, where
\begin{equation*}
  M_\alpha=\left(\mathbb{N}\times\mathbb{N}\right)\setminus \left(\left\{(i,j)\colon i\leqslant \overline{m}_\alpha \hbox{~and~} j\leqslant \overline{m}_\alpha \right\}\right).
\end{equation*}
Then $\widehat{\varepsilon}_\alpha\preccurlyeq\varepsilon_\alpha$ where $\varepsilon_\alpha$ is the idempotent of the semigroup $\mathscr{P\!O}\!_{\infty}^{\,+}(\mathbb{N}^2_{\leqslant})$ defined in the proof of Lem\-ma~\ref{lemma-2.12}. This implies that
\begin{equation*}
  \widehat{\varepsilon}_\alpha\alpha=\widehat{\varepsilon}_\alpha\varepsilon_\alpha\alpha= \widehat{\varepsilon}_\alpha\varepsilon_\alpha\gamma_{k_1}^{p_1}\ldots\gamma_{k_i}^{p_i}\upsilon_{l_1}^{q_1}\ldots\upsilon_{l_j}^{q_j}= \widehat{\varepsilon}_\alpha\gamma_{k_1}^{p_1}\ldots\gamma_{k_i}^{p_i}\upsilon_{l_1}^{q_1}\ldots\upsilon_{l_j}^{q_j},
\end{equation*}
and the equlity
\begin{equation*}
  \widehat{\varepsilon}_\alpha\gamma_{k_1}^{p_1}\ldots\gamma_{k_i}^{p_i}\upsilon_{l_1}^{q_1}\ldots\upsilon_{l_j}^{q_j}=
  \widehat{\varepsilon}_\alpha\upsilon_{l_1}^{q_1}\ldots\upsilon_{l_j}^{q_j}\gamma_{k_1}^{p_1}\ldots\gamma_{k_i}^{p_i}
\end{equation*}
follows from the definition of the idempotent $\widehat{\varepsilon}_\alpha\in\mathscr{P\!O}\!_{\infty}^{\,+}(\mathbb{N}^2_{\leqslant})$.
\end{proof}

The following theorem describes the quotient semigroup $\mathscr{P\!O}\!_{\infty}^{\,+}(\mathbb{N}^2_{\leqslant})/\sigma$.

\begin{theorem}\label{theorem-2.15}
The quotient semigroup $\mathscr{P\!O}\!_{\infty}^{\,+}(\mathbb{N}^2_{\leqslant})/\sigma$ is isomorphic to the free commutative monoid $\mathfrak{AM}_\omega$ over an infinite countable set.
\end{theorem}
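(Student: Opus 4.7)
The plan is to use Lemmas~\ref{lemma-2.13} and \ref{lemma-2.14} to identify the quotient $\mathscr{P\!O}\!_{\infty}^{\,+}(\mathbb{N}^2_{\leqslant})/\sigma$ with the free commutative monoid on the countable set $\{[\gamma_k]_\sigma,[\upsilon_l]_\sigma\colon k,l\in\mathbb{N}\}$. Realize $\mathfrak{AM}_\omega$ as the free commutative monoid on formal generators $\widetilde{\gamma}_k,\widetilde{\upsilon}_l$ ($k,l\in\mathbb{N}$), and define
\[
\Phi\colon\mathfrak{AM}_\omega\to\mathscr{P\!O}\!_{\infty}^{\,+}(\mathbb{N}^2_{\leqslant})/\sigma, \qquad \widetilde{\gamma}_k\mapsto[\gamma_k]_\sigma,\quad \widetilde{\upsilon}_l\mapsto[\upsilon_l]_\sigma,
\]
extended multiplicatively with the empty product sent to $[\mathbb{I}]_\sigma$.

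To see that $\Phi$ is a well-defined monoid homomorphism I must verify the commutation relations of $\mathfrak{AM}_\omega$ on the image. The text observes that $\left\langle\gamma_k\mid k\in\mathbb{N}\right\rangle$ and $\left\langle\upsilon_l\mid l\in\mathbb{N}\right\rangle$ are each free abelian, so the like-type commutations $\gamma_k\gamma_{k'}=\gamma_{k'}\gamma_k$ and $\upsilon_l\upsilon_{l'}=\upsilon_{l'}\upsilon_l$ already hold in $\mathscr{P\!O}\!_{\infty}^{\,+}(\mathbb{N}^2_{\leqslant})$. For the mixed relation $[\gamma_k]_\sigma[\upsilon_l]_\sigma=[\upsilon_l]_\sigma[\gamma_k]_\sigma$, apply Lemma~\ref{lemma-2.14} with $\alpha=\gamma_k\upsilon_l$: it produces an idempotent $\widehat{\varepsilon}$ with $\widehat{\varepsilon}\gamma_k\upsilon_l=\widehat{\varepsilon}\upsilon_l\gamma_k$, whence $(\gamma_k\upsilon_l)\,\sigma\,(\upsilon_l\gamma_k)$ by Proposition~\ref{proposition-2.5}$(iv)$. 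Surjectivity of $\Phi$ is then immediate from the first half of Lemma~\ref{lemma-2.13}, which supplies, for every $\sigma$-class, a representative of the form $\gamma_{k_1}^{p_1}\cdots\gamma_{k_i}^{p_i}\upsilon_{l_1}^{q_1}\cdots\upsilon_{l_j}^{q_j}$.

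The main obstacle is injectivity. By the final statement of Lemma~\ref{lemma-2.13}, it suffices to show that if two normal forms $\gamma_{k_1}^{p_1}\cdots\gamma_{k_i}^{p_i}\upsilon_{l_1}^{q_1}\cdots\upsilon_{l_j}^{q_j}$ and $\gamma_{a_1}^{b_1}\cdots\gamma_{a_{i'}}^{b_{i'}}\upsilon_{c_1}^{d_1}\cdots\upsilon_{c_{j'}}^{d_{j'}}$ differ as elements of $\mathfrak{AM}_\omega$, then no idempotent $\iota\in E(\mathscr{P\!O}\!_{\infty}^{\,+}(\mathbb{N}^2_{\leqslant}))$ equalizes the two corresponding products. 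I plan to extract the exponent data from the shift action on each horizontal line. A direct check on the definitions shows that $\gamma_{k_1}^{p_1}\cdots\gamma_{k_i}^{p_i}\upsilon_{l_1}^{q_1}\cdots\upsilon_{l_j}^{q_j}$ acts on all sufficiently large points of $\textsf{H}^j$ as the horizontal left-shift $(n,j)\mapsto(n-S(j),j)$, where $S(j)=\sum_{r\colon k_r\geqslant j}p_r$ (the $\upsilon$-factors are invisible on the cofinite tail of $\textsf{H}^j$ once the first coordinate exceeds $\max\{l_s\}+S(j)$). Since $\iota$ has cofinite domain, for every $j\in\mathbb{N}$ there are infinitely many $(n,j)\in\textsf{H}^j\cap\operatorname{dom}\iota$, so equating the composed images on these points forces $S(j)=S'(j)$ for every $j\in\mathbb{N}$. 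The step function $S$ is non-increasing with a downward jump of exactly $p_r$ at $j=k_r$, so $S$ determines $\{(k_r,p_r)\}$ uniquely; the symmetric argument on vertical lines $\textsf{V}^i$ recovers $\{(l_s,q_s)\}$. Hence the two words must coincide, contradicting the assumption.

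The bulk of the genuine work lies in verifying the shift formula $S(j)$, which is a short induction on the number of $\gamma$-factors (multiplying by $\gamma_{k_r}^{p_r}$ adds $p_r$ to the shift on each row $j\leqslant k_r$ and leaves the rest unchanged), together with checking that the $\upsilon$-factors are trivial on the relevant cofinite tails; once these are in place the inversion of the step function $S$ to recover $\{(k_r,p_r)\}$ is routine, and combining the three steps yields that $\Phi$ is an isomorphism of monoids.
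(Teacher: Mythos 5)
Your argument is correct, and it follows the same overall strategy as the paper -- reduce everything to the normal forms $\gamma_{k_1}^{p_1}\cdots\gamma_{k_i}^{p_i}\upsilon_{l_1}^{q_1}\cdots\upsilon_{l_j}^{q_j}$ supplied by Lemma~\ref{lemma-2.13} and use Lemma~\ref{lemma-2.14} for the mixed commutation -- but you build the isomorphism in the opposite direction and, more importantly, you prove a point the paper only asserts. The paper defines $\mathfrak{H}_{\sigma}$ from the semigroup onto $\mathfrak{AM}_X$ via the normal form of $\alpha$ and then states that ``Lemmas~\ref{lemma-2.13} and~\ref{lemma-2.14} imply that $(\alpha)\mathfrak{H}_{\sigma}=(\beta)\mathfrak{H}_{\sigma}$ if and only if $\alpha\sigma\beta$''; but the last statement of Lemma~\ref{lemma-2.13} is essentially a restatement of Proposition~\ref{proposition-2.5}$(iv)$, so the substantive claim -- that two \emph{distinct} words in the $\gamma$'s and $\upsilon$'s are never equalized by an idempotent (equivalently, that $\mathfrak{H}_{\sigma}$ is well defined and separates $\sigma$-classes) -- is left to the reader. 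Your shift-function computation $S(j)=\sum_{r\colon k_r\geqslant j}p_r$ on the cofinite tails of the rows $\textsf{H}^j$, combined with the cofiniteness of $\operatorname{dom}\iota$ and the recovery of $\{(k_r,p_r)\}$ from the jumps of $S$, is exactly the missing injectivity argument, and the symmetric computation on the columns recovers the $\upsilon$-data. Your use of Lemma~\ref{lemma-2.14} for $[\gamma_k]_\sigma[\upsilon_l]_\sigma=[\upsilon_l]_\sigma[\gamma_k]_\sigma$ is also genuinely necessary rather than cosmetic: $\gamma_k$ and $\upsilon_l$ do not commute on the nose (e.g.\ $(2,1)\in\operatorname{dom}(\upsilon_1\gamma_1)\setminus\operatorname{dom}(\gamma_1\upsilon_1)$), so the commutativity only holds modulo $\sigma$, and the universal property of the free commutative monoid then makes $\Phi$ well defined. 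In short, your route buys a self-contained verification of the two facts the paper's two-line proof compresses away, at the cost of the (routine, as you note) induction establishing the shift formula.
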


\begin{proof}
Let $X=\left\{a_i\colon i\in\mathbb{N}\right\}\cup\left\{b_j\colon j\in\mathbb{N}\right\}$ be a countable infinite set.

We define the map $\mathfrak{H}_{\sigma}\colon \mathscr{P\!O}\!_{\infty}^{\,+}(\mathbb{N}^2_{\leqslant})\rightarrow\mathfrak{AM}_X$ in the following way:
\begin{itemize}
  \item[$(a)$] if $\alpha\sigma\big(\gamma_{k_1}^{p_1}\ldots\gamma_{k_i}^{p_i}\upsilon_{l_1}^{q_1}\ldots\upsilon_{l_j}^{q_j}\big)$ for some  positive integers $p_1,\ldots,p_i$, $q_1,\ldots,q_j$, $k_1<\ldots<k_i$, $l_1<\ldots<l_j$, then
      \begin{equation*}
        (\alpha)\mathfrak{H}_{\sigma}=
        \big(\gamma_{k_1}^{p_1}\ldots\gamma_{k_i}^{p_i}\upsilon_{l_1}^{q_1}\ldots\upsilon_{l_j}^{q_j}\big)\mathfrak{H}_{\sigma}=
        a_{k_1}^{p_1}\ldots a_{k_i}^{p_i}b_{l_1}^{q_1}\ldots b_{l_j}^{q_j};
      \end{equation*}
  \item[$(b)$] $(\mathbb{I})\mathfrak{H}_{\sigma}=e$, where $e$ is the unit of the free commutative monoid $\mathfrak{AM}_X$.
\end{itemize}

Then Lemmas~\ref{lemma-2.13} and~\ref{lemma-2.14} imply that $(\alpha)\mathfrak{H}_{\sigma}=(\beta)\mathfrak{H}_{\sigma}$ if and only if $\alpha\sigma\beta$ in $\mathscr{P\!O}\!_{\infty}^{\,+}(\mathbb{N}^2_{\leqslant})$ and hence the quotient semigroup $\mathscr{P\!O}\!_{\infty}^{\,+}(\mathbb{N}^2_{\leqslant})/\sigma$ is isomorphic to the free commutative monoid $\mathfrak{AM}_X$.
\end{proof}

 The following corollary of Theorem~\ref{theorem-2.15} shows that the semigroup $\mathscr{P\!O}\!_{\infty}^{\,+}(\mathbb{N}^2_{\leqslant})$ has infinitely many congruences similar as the free commutative monoid $\mathfrak{AM}_\omega$ over an infinite countable set.

\begin{corollary}\label{corollary-2.16}
Every countable \emph{(}infinite or finite\emph{)} commutative monoid is a homomorphic image of the semigroup $\mathscr{P\!O}\!_{\infty}^{\,+}(\mathbb{N}^2_{\leqslant})$.
\end{corollary}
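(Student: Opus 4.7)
The plan is to exploit the universal property of the free commutative monoid together with Theorem~\ref{theorem-2.15}. Let $M$ be any countable commutative monoid. First I would pick a countable generating set $\{m_i : i \in I\} \subseteq M$ (for instance, take $I$ to be all of $M$ if one wants a canonical choice, or any countable generating subset; since $M$ is countable such a set exists and is at most countable). Let $X = \{x_i : i \in I\}$ be a set of formal symbols in bijection with this generating set, and let $\mathfrak{AM}_X$ denote the free commutative monoid on $X$.

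By the universal property of $\mathfrak{AM}_X$, the set-theoretic assignment $x_i \mapsto m_i$ extends uniquely to a monoid homomorphism $\varphi \colon \mathfrak{AM}_X \to M$, and because $\{m_i\}$ generates $M$ this $\varphi$ is surjective. Since $X$ is countable, $\mathfrak{AM}_X$ is isomorphic to $\mathfrak{AM}_\omega$ when $X$ is infinite, and is a retract (hence a homomorphic image) of $\mathfrak{AM}_\omega$ when $X$ is finite --- in the finite case we simply map the extra free generators of $\mathfrak{AM}_\omega$ to the identity of $\mathfrak{AM}_X$. Either way, $M$ is a homomorphic image of $\mathfrak{AM}_\omega$.

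Now apply Theorem~\ref{theorem-2.15}: there is a surjective homomorphism (namely the canonical projection onto the quotient)
\begin{equation*}
  \pi \colon \mathscr{P\!O}\!_{\infty}^{\,+}(\mathbb{N}^2_{\leqslant}) \longrightarrow \mathscr{P\!O}\!_{\infty}^{\,+}(\mathbb{N}^2_{\leqslant})/\sigma \cong \mathfrak{AM}_\omega.
\end{equation*}
Composing $\pi$ with the surjection $\mathfrak{AM}_\omega \twoheadrightarrow M$ produced above yields a surjective homomorphism from $\mathscr{P\!O}\!_{\infty}^{\,+}(\mathbb{N}^2_{\leqslant})$ onto $M$, which is exactly what the corollary asserts.

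There is essentially no technical obstacle here: the entire content is packaged in Theorem~\ref{theorem-2.15}, and the remaining step is the standard observation that every countable commutative monoid is a quotient of the free commutative monoid on a countable set. The only mild care needed is to cover both the finite and infinite cases of $|X|$ uniformly, which is handled by either adjoining redundant generators that are mapped to the identity, or by replacing $\mathfrak{AM}_\omega$ with $\mathfrak{AM}_X$ and noting the retraction described above.
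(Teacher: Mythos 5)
Your argument is correct and is precisely the intended one: the paper states this corollary without proof as an immediate consequence of Theorem~\ref{theorem-2.15}, relying on exactly the standard fact that every countable commutative monoid is a quotient of the free commutative monoid on a countable set, composed with the canonical projection $\mathscr{P\!O}\!_{\infty}^{\,+}(\mathbb{N}^2_{\leqslant}) \to \mathscr{P\!O}\!_{\infty}^{\,+}(\mathbb{N}^2_{\leqslant})/\sigma \cong \mathfrak{AM}_\omega$. Your handling of the finite-generating-set case by sending redundant generators to the identity is the right way to make the statement uniform.
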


Its obvious that every non-unit element $u$ of the free commutative monoid $\mathfrak{AM}_\omega$ over the infinite countable set $\left\{a_i\colon i\in\omega\right\}\cup\left\{b_j\colon j\in\omega\right\}$ can be represented in the form $u=a_1^{i_1}\ldots a_k^{i_k}b_1^{j_1}\ldots b_l^{j_l}$, where $i_1,\ldots,i_k,j_1,\ldots,l_l$ are positive integers. We define a map $\mathfrak{f}\colon \mathfrak{AM}_\omega\to \mathfrak{AM}_\omega$  by the formula
\begin{equation}\label{eq-2.4}
(a_1^{i_1}\ldots a_k^{i_k}b_1^{j_1}\ldots b_l^{j_l})\mathfrak{f}=a_1^{j_1}\ldots a_l^{j_l}b_1^{i_1}\ldots b_k^{i_k},
\end{equation}
for $u=a_1^{i_1}\ldots a_k^{i_k}b_1^{j_1}\ldots b_l^{j_l}\in\mathfrak{AM}_\omega$ and $(e)\mathfrak{f}=e$, for unit element $e$ of $\mathfrak{AM}_\omega$.

\begin{proposition}\label{proposition-2.18}
The map $\mathfrak{f}\colon \mathfrak{AM}_\omega\to \mathfrak{AM}_\omega$ is an automorphism of the free commutative monoid $\mathfrak{AM}_\omega$.
\end{proposition}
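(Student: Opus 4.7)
The plan is to exploit the universal property of the free commutative monoid: $\mathfrak{AM}_\omega$ is generated freely (in the category of commutative monoids) by the set $X=\{a_i\colon i\in\omega\}\cup\{b_j\colon j\in\omega\}$, so any map $X\to \mathfrak{AM}_\omega$ extends uniquely to a monoid homomorphism. Define $\widetilde{\mathfrak{f}}\colon X\to \mathfrak{AM}_\omega$ by $(a_i)\widetilde{\mathfrak{f}}=b_i$ and $(b_j)\widetilde{\mathfrak{f}}=a_j$, and let $\mathfrak{f}'$ denote its unique monoid homomorphism extension to $\mathfrak{AM}_\omega$.

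Next I would check that $\mathfrak{f}'$ coincides with the map $\mathfrak{f}$ defined by formula~(\ref{eq-2.4}). Take a typical non-unit word $u=a_1^{i_1}\ldots a_k^{i_k}b_1^{j_1}\ldots b_l^{j_l}$. Since $\mathfrak{f}'$ is a homomorphism and $\mathfrak{AM}_\omega$ is commutative,
\begin{equation*}
(u)\mathfrak{f}'=\bigl((a_1)\mathfrak{f}'\bigr)^{i_1}\ldots\bigl((a_k)\mathfrak{f}'\bigr)^{i_k} \bigl((b_1)\mathfrak{f}'\bigr)^{j_1}\ldots\bigl((b_l)\mathfrak{f}'\bigr)^{j_l} =b_1^{i_1}\ldots b_k^{i_k}a_1^{j_1}\ldots a_l^{j_l},
\end{equation*}
which, after commuting factors, is precisely $a_1^{j_1}\ldots a_l^{j_l}b_1^{i_1}\ldots b_k^{i_k}=(u)\mathfrak{f}$. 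The same holds trivially for the unit $e$. Thus $\mathfrak{f}=\mathfrak{f}'$ is a monoid homomorphism.

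Finally, I would verify bijectivity by showing $\mathfrak{f}\circ\mathfrak{f}=\operatorname{\textsf{Id}}_{\mathfrak{AM}_\omega}$. On generators, $(a_i)\mathfrak{f}\mathfrak{f}=(b_i)\mathfrak{f}=a_i$ and similarly for each $b_j$, and since both $\mathfrak{f}\circ\mathfrak{f}$ and $\operatorname{\textsf{Id}}$ are endomorphisms agreeing on the free generating set $X$, they coincide on all of $\mathfrak{AM}_\omega$ by uniqueness in the universal property. Equivalently, one can see this directly from formula~(\ref{eq-2.4}): applying $\mathfrak{f}$ twice swaps the $a$- and $b$-exponents back. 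Hence $\mathfrak{f}$ is a bijective monoid homomorphism, i.e., an automorphism of $\mathfrak{AM}_\omega$.

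There is no real obstacle here; the only care required is in interpreting~(\ref{eq-2.4}) correctly so that it genuinely encodes the generator swap $a_i\leftrightarrow b_i$ (the notation in the formula, with the $a$'s indexed up to $k$ and $b$'s up to $l$, could otherwise suggest a non-involutive map). Once this reading is fixed, the proof is a routine application of the universal property together with the involution identity $\mathfrak{f}^2=\operatorname{\textsf{Id}}$.
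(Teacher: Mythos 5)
Your proof is correct, but it takes a genuinely different route from the paper's. The paper proves the homomorphism property by a direct computation: it writes two arbitrary elements $u=a_1^{i_1}\ldots a_p^{i_p}b_1^{j_1}\ldots b_p^{j_p}$ and $v=a_1^{s_1}\ldots a_p^{s_p}b_1^{t_1}\ldots b_p^{t_p}$ (padded with zero exponents to a common length $p$), expands $(uv)\mathfrak{f}$ exponent by exponent, and checks it equals $(u)\mathfrak{f}(v)\mathfrak{f}$; bijectivity is then simply declared ``obvious.'' You instead invoke the universal property of the free commutative monoid: the generator swap $a_i\mapsto b_i$, $b_j\mapsto a_j$ extends uniquely to an endomorphism, which you identify with $\mathfrak{f}$ as defined by formula~(\ref{eq-2.4}), and bijectivity follows from the involution identity $\mathfrak{f}\circ\mathfrak{f}=\operatorname{\textsf{Id}}$, verified on the free generating set. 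Your approach buys two things: the homomorphism property comes for free rather than by computation, and the bijectivity claim --- which the paper leaves unjustified --- gets an actual one-line proof. What it costs is the (mild) reliance on the universal property, where the paper's argument is entirely hands-on. Your closing caveat about reading~(\ref{eq-2.4}) as the generator swap is well taken: the formula's asymmetric indexing (the $a$'s up to $k$, the $b$'s up to $l$) is exactly the point where a misreading would break the involution identity, and fixing that reading is the only non-routine step in either proof.
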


\begin{proof}
First we show that $\mathfrak{f}\colon \mathfrak{AM}_\omega\to \mathfrak{AM}_\omega$ is a homomorphism. Fix arbitrary elements $u,v\in\mathfrak{AM}_\omega$.  Without loss of generality we may assume that
\begin{equation*}
u=a_1^{i_1}\ldots a_p^{i_p}b_1^{j_1}\ldots b_p^{j_p}\qquad \hbox{and} \qquad v=a_1^{s_1}\ldots a_p^{s_p}b_1^{t_1}\ldots b_p^{t_p}
\end{equation*}
for some non-negative integers $p,i_1,\ldots, i_p,j_1,\ldots,j_p,s_1,\ldots,s_p,t_1,\ldots,t_p$, where $a^i=b^i=e$ for $i=0$.

Then we have that
\begin{equation*}
\begin{split}
  (uv)\mathfrak{f}
    & = (a_1^{i_1}\ldots a_p^{i_p}b_1^{j_1}\ldots b_p^{j_p}a_1^{s_1}\ldots a_p^{s_p}b_1^{t_1}\ldots b_p^{t_p})\mathfrak{f}= \\
    & = (a_1^{i_1+s_1}\ldots a_p^{i_p+s_p}b_1^{j_1+t_1}\ldots b_p^{j_p+t_p})\mathfrak{f}=\\
    & = a_1^{j_1+t_1}\ldots a_p^{j_p+t_p}b_1^{i_1+s_1}\ldots b_p^{i_p+s_p}=\\
    & = a_1^{j_1}\ldots a_p^{j_p}b_1^{i_1}\ldots b_p^{i_p}a_1^{t_1}\ldots a_p^{t_p}b_1^{s_1}\ldots b_p^{s_p}=\\
    & = (a_1^{i_1}\ldots a_p^{i_p}b_p^{j_1}\ldots b_l^{j_p})\mathfrak{h} (a_1^{s_1}\ldots a_p^{s_p}b_1^{t_1}\ldots b_p^{t_p})\mathfrak{f}= \\
    & = (u)\mathfrak{f}(v)\mathfrak{f}.
\end{split}
\end{equation*}

It is obvious that $\mathfrak{f}\colon \mathfrak{AM}_\omega\to \mathfrak{AM}_\omega$ is a bijective map
and hence $\mathfrak{f}\colon \mathfrak{AM}_\omega\to \mathfrak{AM}_\omega$ is an automorph\-ism.
\end{proof}

The relationships between elements of the subsemigroup $\left\langle\gamma_k\mid k\in\mathbb{N}\right\rangle$ and of the subsemigroup $\left\langle\upsilon_k\mid k\in\mathbb{N}\right\rangle$ in $\mathscr{P\!O}\!_{\infty}^{\,+}(\mathbb{N}^2_{\leqslant})$ is described by the following proposition.

We observe that the cyclic group $\mathbb{Z}_2$ acts on the free commutative monoid $\mathfrak{AM}_\omega$ over the infinite countable set $\left\{a_i\colon i\in\omega\right\}\cup\left\{b_j\colon j\in\omega\right\}$ in the following way
\begin{equation*}
\mathfrak{AM}_\omega\times\mathbb{Z}_2\to \mathfrak{AM}_\omega\colon (u,g)\mapsto v=
\left\{
  \begin{array}{ll}
    u,               & \hbox{if~} g=\bar{0};\\
    (u)\mathfrak{f}, & \hbox{if~} g=\bar{1},
  \end{array}
\right.
\end{equation*}
where the map $\mathfrak{f}\colon \mathfrak{AM}_\omega\to \mathfrak{AM}_\omega$ is defined by formula(\ref{eq-2.4}). By Proposition~\ref{proposition-2.18} the map $\mathfrak{f}$ is an auto\-mor\-phism of the free commutative monoid $\mathfrak{AM}_\omega$.

\begin{proposition}\label{proposition-2.19}
Let $p_1,\ldots,p_i$, $k_1,\ldots,k_i$  be some  positive integers such that $k_1<\ldots<k_i$. Then the following assertions hold:
\begin{itemize}
  \item[$(i)$] $\varpi\gamma_{k_1}^{p_1}\ldots\gamma_{k_i}^{p_i}\varpi=\upsilon_{k_1}^{p_1}\ldots\upsilon_{k_i}^{p_i}$;
  \item[$(ii)$] $\gamma_{k_1}^{p_1}\ldots\gamma_{k_i}^{p_i}\varpi=\varpi\upsilon_{k_1}^{p_1}\ldots\upsilon_{k_i}^{p_i}$;
  \item[$(iii)$] $\varpi\gamma_{k_1}^{p_1}\ldots\gamma_{k_i}^{p_i}=\upsilon_{k_1}^{p_1}\ldots\upsilon_{k_i}^{p_i}\varpi$;
  \item[$(iv)$] $\varpi\upsilon_{k_1}^{p_1}\ldots\upsilon_{k_i}^{p_i}\varpi=\gamma_{k_1}^{p_1}\ldots\gamma_{k_i}^{p_i}$.
\end{itemize}
\end{proposition}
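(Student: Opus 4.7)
The plan is to reduce all four identities to a single base case, namely the conjugation formula $\varpi\gamma_n\varpi=\upsilon_n$ for each positive integer $n$, and then propagate it through products using the involution $\varpi^2=\mathbb{I}$.

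First I would verify the base case directly from the definitions. Since $\varpi$ is a total bijection of $\mathbb{N}\times\mathbb{N}$, the domain of $\varpi\gamma_n\varpi$ equals $\{(i,j):(j,i)\in\operatorname{dom}\gamma_n\}$, and excluding $j=1,\,i\leqslant n$ on the $\varpi$-swapped side amounts to excluding exactly the pairs $(i,1)$ with $i\leqslant n$; that is precisely $\operatorname{dom}\upsilon_n$. For the action one traces: if $i\leqslant n$, then $(i,j)\mapsto(j,i)\mapsto(j-1,i)\mapsto(i,j-1)$; if $i>n$, then the middle arrow is the identity and the composite fixes $(i,j)$. Both cases match $\upsilon_n$, so $\varpi\gamma_n\varpi=\upsilon_n$. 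Multiplying on both sides by $\varpi$ and using $\varpi^2=\mathbb{I}$ immediately gives the companion identity $\varpi\upsilon_n\varpi=\gamma_n$.

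Next, to prove (i), I would insert $\mathbb{I}=\varpi\varpi$ between every pair of consecutive $\gamma$-factors. Concretely,
\begin{equation*}
\varpi\gamma_{k_1}^{p_1}\ldots\gamma_{k_i}^{p_i}\varpi
=\bigl(\varpi\gamma_{k_1}\varpi\bigr)^{p_1}\bigl(\varpi\gamma_{k_2}\varpi\bigr)^{p_2}\cdots\bigl(\varpi\gamma_{k_i}\varpi\bigr)^{p_i}
=\upsilon_{k_1}^{p_1}\ldots\upsilon_{k_i}^{p_i},
\end{equation*}
where the first equality uses $\varpi^2=\mathbb{I}$ repeatedly and the second uses the base case. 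This settles (i). Statements (ii) and (iii) follow from (i) by left- and right-multiplying, respectively, by $\varpi$ and simplifying $\varpi\varpi=\mathbb{I}$; explicitly, $\gamma_{k_1}^{p_1}\ldots\gamma_{k_i}^{p_i}\varpi=\varpi\cdot\varpi\gamma_{k_1}^{p_1}\ldots\gamma_{k_i}^{p_i}\varpi=\varpi\upsilon_{k_1}^{p_1}\ldots\upsilon_{k_i}^{p_i}$, and a symmetric calculation gives (iii). Finally, (iv) follows from (i) by sandwiching with $\varpi$: $\varpi\upsilon_{k_1}^{p_1}\ldots\upsilon_{k_i}^{p_i}\varpi=\varpi\bigl(\varpi\gamma_{k_1}^{p_1}\ldots\gamma_{k_i}^{p_i}\varpi\bigr)\varpi=\gamma_{k_1}^{p_1}\ldots\gamma_{k_i}^{p_i}$.

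The only nontrivial step is the base-case verification $\varpi\gamma_n\varpi=\upsilon_n$, and even that is routine once one is careful about the partial domain; the restriction issued by removing $\{(1,i):i\leqslant n\}$ from $\operatorname{dom}\gamma_n$ is transported by $\varpi$ to the removal of $\{(i,1):i\leqslant n\}$, exactly matching $\operatorname{dom}\upsilon_n$. After that, all four identities are formal consequences of $\varpi^2=\mathbb{I}$, so no further combinatorics is required.
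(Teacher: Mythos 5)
Your proof is correct and follows essentially the same route as the paper, which likewise establishes $(i)$ directly from the definitions of the $\gamma_k$ and $\upsilon_k$ and then derives $(ii)$--$(iv)$ from $(i)$ together with $\varpi\varpi=\mathbb{I}$. Your write-up simply makes explicit the single-generator conjugation $\varpi\gamma_n\varpi=\upsilon_n$ and the telescoping insertion of $\varpi\varpi$, details the paper leaves to the reader.
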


\begin{proof}
Assertion $(i)$ follows from the definitions of the elements of the semigroups $\left\langle\gamma_k\mid k\in\mathbb{N}\right\rangle$ and $\left\langle\upsilon_k\mid k\in\mathbb{N}\right\rangle$. Other assertions follow from $(i)$ and the equality $\varpi\varpi=\mathbb{I}$.
\end{proof}

Later we assume that $\mathbb{Z}_2=\left\{\bar{0},\bar{1}\right\}$.

The following theorem describes the quotient semigroup $\mathscr{P\!O}\!_{\infty}(\mathbb{N}^2_{\leqslant})/\sigma$.

\begin{theorem}\label{theorem-2.20}
The semigroup $\mathscr{P\!O}\!_{\infty}(\mathbb{N}^2_{\leqslant})/\sigma$ is isomorphic to the semidirect product $\mathfrak{AM}_\omega\rtimes_{\mathfrak{Q}} \mathbb{Z}_2$ of the free commutative monoid $\mathfrak{AM}_\omega$ over an infinite countable set by the cyclic group $\mathbb{Z}_2$.
\end{theorem}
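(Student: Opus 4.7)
The plan is to bootstrap from Theorem~\ref{theorem-2.11} and Theorem~\ref{theorem-2.15}, by showing that the congruence $\sigma$ on $\mathscr{P\!O}\!_{\infty}(\mathbb{N}^2_{\leqslant})$ respects the semidirect product decomposition, and that the twisting action of $\varpi$ descends to the automorphism $\mathfrak{f}$ on the quotient. So I would first establish the following compatibility: under the isomorphism $\mathfrak{I}\colon \mathscr{P\!O}\!_{\infty}^{\,+}(\mathbb{N}^2_{\leqslant})\rtimes_{\mathfrak{Q}} H(\mathbb{I})\to \mathscr{P\!O}\!_{\infty}(\mathbb{N}^2_{\leqslant})$ of Theorem~\ref{theorem-2.11}, we have $\alpha\sigma\beta$ in $\mathscr{P\!O}\!_{\infty}(\mathbb{N}^2_{\leqslant})$ if and only if $g_\alpha=g_\beta$ and $\alpha^{+}\sigma\beta^{+}$ in $\mathscr{P\!O}\!_{\infty}^{\,+}(\mathbb{N}^2_{\leqslant})$. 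The equality $g_\alpha=g_\beta$ follows immediately from Corollary~\ref{corollary-2.6-1}. For the equivalence of the $+$-parts, note that by Proposition~\ref{proposition-2.5}$(iv)$, $\alpha\sigma\beta$ gives $\iota\alpha=\iota\beta$ for some idempotent $\iota$; substituting $\alpha=\alpha^{+}g_\alpha$ and $\beta=\beta^{+}g_\alpha$ and cancelling the group element $g_\alpha$ on the right (it is invertible in $H(\mathbb{I})$) yields $\iota\alpha^{+}=\iota\beta^{+}$. The converse is straightforward by multiplying on the right by $g_\alpha=g_\beta$.

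Next I would check that the automorphism $\mathfrak{h}\colon\alpha\mapsto\varpi\alpha\varpi$ preserves $\sigma$ (trivial, since it is an automorphism and $\sigma$ is definable purely algebraically) and hence descends to an automorphism $\overline{\mathfrak{h}}$ of the quotient $\mathscr{P\!O}\!_{\infty}^{\,+}(\mathbb{N}^2_{\leqslant})/\sigma$. The key point is to identify $\overline{\mathfrak{h}}$ with $\mathfrak{f}$ under the isomorphism of Theorem~\ref{theorem-2.15}. For a representative $\alpha=\gamma_{k_1}^{p_1}\ldots\gamma_{k_i}^{p_i}\upsilon_{l_1}^{q_1}\ldots\upsilon_{l_j}^{q_j}$, inserting $\varpi\varpi=\mathbb{I}$ between consecutive factors of $\varpi\alpha\varpi$ and applying Proposition~\ref{proposition-2.19} yields
\begin{equation*}
  (\alpha)\mathfrak{h}=\upsilon_{k_1}^{p_1}\ldots\upsilon_{k_i}^{p_i}\gamma_{l_1}^{q_1}\ldots\gamma_{l_j}^{q_j},
\end{equation*}
which by Lemma~\ref{lemma-2.14} is $\sigma$-equivalent to $\gamma_{l_1}^{q_1}\ldots\gamma_{l_j}^{q_j}\upsilon_{k_1}^{p_1}\ldots\upsilon_{k_i}^{p_i}$. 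Applying $\mathfrak{H}_\sigma$ gives $a_{l_1}^{q_1}\ldots a_{l_j}^{q_j}b_{k_1}^{p_1}\ldots b_{k_i}^{p_i}$, which is exactly $(a_{k_1}^{p_1}\ldots a_{k_i}^{p_i}b_{l_1}^{q_1}\ldots b_{l_j}^{q_j})\mathfrak{f}=((\alpha)\mathfrak{H}_\sigma)\mathfrak{f}$. This identifies $\overline{\mathfrak{h}}$ with $\mathfrak{f}$, and therefore the homomorphism $\mathfrak{Q}\colon H(\mathbb{I})\to\operatorname{Aut}(\mathscr{P\!O}\!_{\infty}^{\,+}(\mathbb{N}^2_{\leqslant}))$ corresponds on the quotient to the homomorphism $\mathbb{Z}_2\to\operatorname{Aut}(\mathfrak{AM}_\omega)$ sending $\bar{1}\mapsto\mathfrak{f}$.

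Finally I would assemble the isomorphism. Define the map
\begin{equation*}
\Phi\colon \mathscr{P\!O}\!_{\infty}(\mathbb{N}^2_{\leqslant})/\sigma\longrightarrow \mathfrak{AM}_\omega\rtimes_{\mathfrak{Q}} \mathbb{Z}_2,\qquad [\alpha]_\sigma\longmapsto \big(([\alpha^{+}]_\sigma)\mathfrak{H}_\sigma,\,g_\alpha\big),
\end{equation*}
which is well-defined and injective by the compatibility established in the first step, and surjective because $\mathfrak{H}_\sigma$ is surjective and $g_\alpha$ ranges over all of $H(\mathbb{I})\cong\mathbb{Z}_2$. The multiplicativity is a direct computation using the semidirect product rule in Theorem~\ref{theorem-2.11} combined with the identification $\overline{\mathfrak{h}}=\mathfrak{f}$ from the second step. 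The main technical obstacle is the first step: confirming that a single $\sigma$-equivalence in the large semigroup really does split into an equality of group parts together with a $\sigma$-equivalence of the $+$-parts; once that separation is in hand the rest is routine because the semidirect product construction is functorial in the base.
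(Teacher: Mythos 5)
Your proposal is correct and follows essentially the same route as the paper: the map you define is precisely the paper's $\mathfrak{I}$ of formula (\ref{eq-2.6}) (your split into $g_\alpha=\mathbb{I}$ versus $g_\alpha=\varpi$ is exactly the paper's case split on $(\textsf{H}^{1}_{\operatorname{dom}\alpha_x})\alpha_x\subseteq\textsf{H}^1$ versus $\textsf{V}^1$), and you rely on the same ingredients, namely Corollary~\ref{corollary-2.6-1}, Proposition~\ref{proposition-2.19}, Lemma~\ref{lemma-2.14} and Theorems~\ref{theorem-2.11} and~\ref{theorem-2.15}. The only difference is organizational: where the paper checks multiplicativity by an explicit four-case computation on monomial representatives, you isolate the splitting of $\sigma$ along the decomposition $\alpha=\alpha^{+}g_\alpha$ and the identification $\overline{\mathfrak{h}}=\mathfrak{f}$ as separate steps and then invoke the semidirect-product structure, which is a cleaner packaging of the same verification.
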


\begin{proof}
We define a map $\mathfrak{I}\colon \mathscr{P\!O}\!_{\infty}(\mathbb{N}^2_{\leqslant})/\sigma \to \mathfrak{AM}_\omega\rtimes_{\mathfrak{Q}} \mathbb{Z}_2\colon x\mapsto (u,g)$ in the following way. Let $\mathfrak{P}_{\sigma}\colon \mathscr{P\!O}\!_{\infty}(\mathbb{N}^2_{\leqslant})\rightarrow\mathscr{P\!O}\!_{\infty}(\mathbb{N}^2_{\leqslant})/\sigma$ be the natural homomorphism generated by the congruence $\sigma$ on the semigroup $\mathscr{P\!O}\!_{\infty}(\mathbb{N}^2_{\leqslant})$. Then for every $x\in \mathscr{P\!O}\!_{\infty}(\mathbb{N}^2_{\leqslant})/\sigma$ for any $\alpha_x\in\mathscr{P\!O}\!_{\infty}(\mathbb{N}^2_{\leqslant})$ such that $(\alpha_x)\mathfrak{P}_{\sigma}=x$ only one of the following conditions holds:
\begin{itemize}
  \item[(1)]  $(\textsf{H}^{1}_{\operatorname{dom}\alpha_x})\alpha_x{\subseteq} \textsf{H}^1$;
  \item[(2)]  $(\textsf{H}^{1}_{\operatorname{dom}\alpha_x})\alpha_x{\subseteq} \textsf{V}^1$.
\end{itemize}
We put
\begin{equation}\label{eq-2.6}
  (x)\mathfrak{I}=
\left\{
  \begin{array}{cl}
    \left((\alpha_x)\mathfrak{H}_{\sigma},\bar{0}\right),       & \hbox{if~} (\textsf{H}^{1}_{\operatorname{dom}\alpha_x})\alpha_x{\subseteq} \textsf{H}^1;\\
    \left((\alpha_x\varpi)\mathfrak{H}_{\sigma},\bar{1}\right), & \hbox{if~} (\textsf{H}^{1}_{\operatorname{dom}\alpha_x})\alpha_x{\subseteq} \textsf{V}^1.
  \end{array}
\right.
\end{equation}
for all $\alpha_x\in\mathscr{P\!O}\!_{\infty}(\mathbb{N}^2_{\leqslant})$ with $(\alpha_x)\mathfrak{P}_{\sigma}=x$. Then the definition of the congruence $\sigma$ on the semigroup $\mathscr{P\!O}\!_{\infty}(\mathbb{N}^2_{\leqslant})$ and Corollary~\ref{corollary-2.6-1} imply that the map
$\mathfrak{I}\colon \mathscr{P\!O}\!_{\infty}(\mathbb{N}^2_{\leqslant})/\sigma\to\mathfrak{AM}_\omega\times\mathbb{Z}_2$ is well defined.

We observe that formula~(\ref{eq-2.6}) implies that $(x_{\mathbb{I}})\mathfrak{I}=\left(e,\bar{0}\right)$ for $x_{\mathbb{I}}=(\mathbb{I})\mathfrak{P}_{\sigma}$ and $(x_{\varpi})\mathfrak{I}=\left(e,\bar{1}\right)$ for $x_{\varpi}=(\varpi)\mathfrak{P}_{\sigma}$. Hence we have that
\begin{equation*}
\begin{split}
  (x)\mathfrak{I}\cdot(x_\mathbb{I})\mathfrak{I}
& =
\left\{
  \begin{array}{cl}
    \left((\alpha_x)\mathfrak{H}_{\sigma},\bar{0}\right)\cdot\left(e,\bar{0}\right),       & \hbox{if~} (\textsf{H}^{1}_{\operatorname{dom}\alpha_x})\alpha_x{\subseteq} \textsf{H}^1;\\
    \left((\alpha_x\varpi)\mathfrak{H}_{\sigma},\bar{1}\right)\cdot\left(e,\bar{0}\right), & \hbox{if~} (\textsf{H}^{1}_{\operatorname{dom}\alpha_x})\alpha_x{\subseteq} \textsf{V}^1
  \end{array}
\right.
= \\
    & =
\left\{
  \begin{array}{cl}
    \left((\alpha_x)\mathfrak{H}_{\sigma}\cdot e,\bar{0}\cdot\bar{0}\right),       & \hbox{if~} (\textsf{H}^{1}_{\operatorname{dom}\alpha_x})\alpha_x{\subseteq} \textsf{H}^1;\\
    \left((\alpha_x\varpi)\mathfrak{H}_{\sigma}\cdot e,\bar{1}\cdot\bar{0}\right), & \hbox{if~} (\textsf{H}^{1}_{\operatorname{dom}\alpha_x})\alpha_x{\subseteq} \textsf{V}^1
  \end{array}
\right.
= \\
    & =
\left\{
  \begin{array}{cl}
    \left((\alpha_x)\mathfrak{H}_{\sigma},\bar{0}\right),       & \hbox{if~} (\textsf{H}^{1}_{\operatorname{dom}\alpha_x})\alpha_x{\subseteq} \textsf{H}^1;\\
    \left((\alpha_x\varpi)\mathfrak{H}_{\sigma},\bar{1}\right), & \hbox{if~} (\textsf{H}^{1}_{\operatorname{dom}\alpha_x})\alpha_x{\subseteq} \textsf{V}^1
  \end{array}
\right.
= \\
    & = (x)\mathfrak{I}
\end{split}
\end{equation*}
and
\begin{equation*}
\begin{split}
  (x_\mathbb{I})\mathfrak{I}\cdot(x)\mathfrak{I}
& =
\left\{
  \begin{array}{cl}
   \left(e,\bar{0}\right)\cdot\left((\alpha_x)\mathfrak{H}_{\sigma},\bar{0}\right),       & \hbox{if~} (\textsf{H}^{1}_{\operatorname{dom}\alpha_x})\alpha_x{\subseteq} \textsf{H}^1;\\
   \left(e,\bar{0}\right)\cdot\left((\alpha_x\varpi)\mathfrak{H}_{\sigma},\bar{1}\right), & \hbox{if~} (\textsf{H}^{1}_{\operatorname{dom}\alpha_x})\alpha_x{\subseteq} \textsf{V}^1
  \end{array}
\right.
= \\
    & =
\left\{
  \begin{array}{cl}
    \left(e\cdot (\alpha_x)\mathfrak{H}_{\sigma},\bar{0}\cdot\bar{0}\right),       & \hbox{if~} (\textsf{H}^{1}_{\operatorname{dom}\alpha_x})\alpha_x{\subseteq} \textsf{H}^1;\\
    \left(e\cdot (\alpha_x\varpi)\mathfrak{H}_{\sigma},\bar{0}\cdot\bar{1}\right), & \hbox{if~} (\textsf{H}^{1}_{\operatorname{dom}\alpha_x})\alpha_x{\subseteq} \textsf{V}^1
  \end{array}
\right.
= \\
    & =
\left\{
  \begin{array}{cl}
    \left((\alpha_x)\mathfrak{H}_{\sigma},\bar{0}\right),       & \hbox{if~} (\textsf{H}^{1}_{\operatorname{dom}\alpha_x})\alpha_x{\subseteq} \textsf{H}^1;\\
    \left((\alpha_x\varpi)\mathfrak{H}_{\sigma},\bar{1}\right), & \hbox{if~} (\textsf{H}^{1}_{\operatorname{dom}\alpha_x})\alpha_x{\subseteq} \textsf{V}^1
  \end{array}
\right.
= \\
    & = (x)\mathfrak{I}.
\end{split}
\end{equation*}
Also, since $\sigma$ is congruence on $\mathscr{P\!O}\!_{\infty}(\mathbb{N}^2_{\leqslant})$, we get
\begin{equation*}
\begin{split}
  (x)\mathfrak{I}\cdot(x_\varpi)\mathfrak{I}
& =
\left\{
  \begin{array}{cl}
    \left((\alpha_x)\mathfrak{H}_{\sigma},\bar{0}\right)\cdot\left(e,\bar{1}\right),       & \hbox{if~} (\textsf{H}^{1}_{\operatorname{dom}\alpha_x})\alpha_x{\subseteq} \textsf{H}^1;\\
    \left((\alpha_x\varpi)\mathfrak{H}_{\sigma},\bar{1}\right)\cdot\left(e,\bar{1}\right), & \hbox{if~} (\textsf{H}^{1}_{\operatorname{dom}\alpha_x})\alpha_x{\subseteq} \textsf{V}^1
  \end{array}
\right.
= \\
    & =
\left\{
  \begin{array}{cl}
    \left((\alpha_x)\mathfrak{H}_{\sigma}\cdot e,\bar{0}\cdot\bar{1}\right),       & \hbox{if~} (\textsf{H}^{1}_{\operatorname{dom}\alpha_x})\alpha_x{\subseteq} \textsf{H}^1;\\
    \left((\alpha_x\varpi)\mathfrak{H}_{\sigma}\cdot e,\bar{1}\cdot\bar{1}\right), & \hbox{if~} (\textsf{H}^{1}_{\operatorname{dom}\alpha_x})\alpha_x{\subseteq} \textsf{V}^1
  \end{array}
\right.
= \\
& =
\left\{
  \begin{array}{cl}
    \left((\alpha_x)\mathfrak{H}_{\sigma},\bar{1}\right),       & \hbox{if~} (\textsf{H}^{1}_{\operatorname{dom}\alpha_x})\alpha_x{\subseteq} \textsf{H}^1;\\
    \left((\alpha_x\varpi)\mathfrak{H}_{\sigma},\bar{0}\right), & \hbox{if~} (\textsf{H}^{1}_{\operatorname{dom}\alpha_x})\alpha_x{\subseteq} \textsf{V}^1
  \end{array}
\right.
= \\
& =
\left\{
  \begin{array}{cl}
    \left((\alpha_x\varpi\varpi)\mathfrak{H}_{\sigma},\bar{1}\right),       & \hbox{if~} (\textsf{H}^{1}_{\operatorname{dom}\alpha_x})\alpha_x{\subseteq} \textsf{H}^1;\\
    \left((\alpha_x\varpi)\mathfrak{H}_{\sigma},\bar{0}\right), & \hbox{if~} (\textsf{H}^{1}_{\operatorname{dom}\alpha_x})\alpha_x{\subseteq} \textsf{V}^1
  \end{array}
\right.
= \\
     & =
\left\{
  \begin{array}{cl}
    \left(((\alpha_x\varpi)\varpi)\mathfrak{H}_{\sigma},\bar{1}\right),       & \hbox{if~} (\textsf{H}^{1}_{\operatorname{dom}(\alpha_x\varpi)})\alpha_x\varpi{\subseteq} \textsf{V}^1;\\
    \left((\alpha_x\varpi)\mathfrak{H}_{\sigma},\bar{0}\right), & \hbox{if~} (\textsf{H}^{1}_{\operatorname{dom}(\alpha_x\varpi)})\alpha_x\varpi{\subseteq} \textsf{H}^1
  \end{array}
\right.
= \\
    & = (x\cdot x_\varpi)\mathfrak{I}
\end{split}
\end{equation*}
and
\begin{itemize}
  \item[$(i)$] in the case when $(\textsf{H}^{1}_{\operatorname{dom}\alpha_x})\alpha_x{\subseteq} \textsf{H}^1$ for $\alpha_x=\gamma_1^{i_1}\ldots \gamma_p^{i_p}\upsilon_1^{j_1}\ldots \upsilon_p^{j_p}$, for some non-negative integers $p,i_1,\ldots, i_p,j_1,\ldots,j_p$, where $\gamma^0=\upsilon^0=\mathbb{I}$, we get that $(\textsf{H}^{1}_{\operatorname{dom}(\varpi\alpha_x)})\varpi\alpha_x{\subseteq} \textsf{V}^1$,
\begin{equation*}
\begin{split}
  (x_\varpi)\mathfrak{I}\cdot(x)\mathfrak{I}
    & =
   \left(e,\bar{1}\right)\cdot\left((\alpha_x)\mathfrak{H}_{\sigma},\bar{0}\right)
= \\
    & =
   \left(e,\bar{1}\right)\cdot\left((\gamma_1^{i_1}\ldots\gamma_p^{i_p}\upsilon_1^{j_1}\ldots \upsilon_p^{j_p})\mathfrak{H}_{\sigma},\bar{0}\right)
= \\
    & =
   \left(e,\bar{1}\right)\cdot\left(a_1^{i_1}\ldots a_p^{i_p}b_1^{j_1}\ldots b_p^{j_p},\bar{0}\right)
= \\
    & =
   \left(e\cdot (a_1^{i_1}\ldots a_p^{i_p}b_1^{j_1}\ldots b_p^{j_p})\mathfrak{f},\bar{1}\cdot \bar{0}\right)
= \\
    & =
   \left((a_1^{i_1}\ldots a_p^{i_p}b_1^{j_1}\ldots b_p^{j_p})\mathfrak{f},\bar{1}\right)
= \\
    & =
   \left(a_1^{j_1}\ldots a_p^{j_p}b_1^{i_1}\ldots b_p^{i_p},\bar{1}\right)
\end{split}
\end{equation*}
and by Proposition~\ref{proposition-2.19},
\begin{equation*}
\begin{split}
  (x_\varpi\cdot x)\mathfrak{I} & =\left((\varpi\alpha_x\varpi)\mathfrak{H}_{\sigma},\bar{1}\right)= \\
    & =\left((\varpi\gamma_1^{i_1}\ldots \gamma_p^{i_p}\upsilon_1^{j_1}\ldots\upsilon_p^{j_p}\varpi)\mathfrak{H}_{\sigma},\bar{1}\right)= \\
    & =\left((\upsilon_1^{i_1}\ldots \upsilon_p^{i_p}\varpi\varpi\gamma_1^{j_1}\ldots\gamma_p^{j_p})\mathfrak{H}_{\sigma},\bar{1}\right)= \\
    & =\left((\upsilon_1^{i_1}\ldots \upsilon_p^{i_p}\gamma_1^{j_1}\ldots\gamma_p^{j_p})\mathfrak{H}_{\sigma},\bar{1}\right)= \\
    & =\left(b_1^{i_1}\ldots b_p^{i_p}a_1^{j_1}\ldots a_p^{j_p},\bar{1}\right)= \\
    & =\left(a_1^{j_1}\ldots a_p^{j_p}b_1^{i_1}\ldots b_p^{i_p},\bar{1}\right);
\end{split}
\end{equation*}

  \item[$(ii)$] in the case when $(\textsf{H}^{1}_{\operatorname{dom}\alpha_x})\alpha_x{\subseteq} \textsf{V}^1$ we get for $\alpha_x=\gamma_1^{i_1}\ldots \gamma_p^{i_p}\upsilon_1^{j_1}\ldots \upsilon_p^{j_p}\varpi$, for some non-negative integers $p,i_1,\ldots, i_p,j_1,\ldots,j_p$, where $\gamma^0=\upsilon^0=\mathbb{I}$, we get that $(\textsf{H}^{1}_{\operatorname{dom}(\varpi\alpha_x\varpi)})\varpi\alpha_x\varpi{\subseteq} \textsf{H}^1$,
\begin{equation*}
\begin{split}
  (x_\varpi)\mathfrak{I}\cdot(x)\mathfrak{I}
    & =
   \left(e,\bar{1}\right)\cdot\left((\alpha_x\varpi)\mathfrak{H}_{\sigma},\bar{1}\right)
= \\
    & =
   \left(e,\bar{1}\right)\cdot\left((\gamma_1^{i_1}\ldots\gamma_p^{i_p}\upsilon_1^{j_1}\ldots \upsilon_p^{j_p}\varpi\varpi)\mathfrak{H}_{\sigma},\bar{1}\right)
= \\
    & =
   \left(e,\bar{1}\right)\cdot\left((\gamma_1^{i_1}\ldots\gamma_p^{i_p}\upsilon_1^{j_1}\ldots \upsilon_p^{j_p})\mathfrak{H}_{\sigma},\bar{1}\right)
= \\
    & =
   \left(e,\bar{1}\right)\cdot\left(a_1^{i_1}\ldots a_p^{i_p}b_1^{j_1}\ldots b_p^{j_p},\bar{1}\right)
= \\
& =
   \left(e\cdot (a_1^{i_1}\ldots a_p^{i_p}b_1^{j_1}\ldots b_p^{j_p})\mathfrak{f},\bar{1}\cdot \bar{1}\right)
= \\
    & =
   \left((a_1^{i_1}\ldots a_p^{i_p}b_1^{j_1}\ldots b_p^{j_p})\mathfrak{f},\bar{0}\right)
= \\
\end{split}
\end{equation*}
\begin{equation*}
\begin{split}
    & =
   \left(a_1^{j_1}\ldots a_p^{j_p}b_1^{i_1}\ldots b_p^{i_p},\bar{0}\right)
\end{split}
\end{equation*}
and by Proposition~\ref{proposition-2.19},
\begin{equation*}
\begin{split}
  (x_\varpi\cdot x)\mathfrak{I} & =\left((\varpi\alpha_x\varpi)\mathfrak{H}_{\sigma},\bar{0}\right)= \\
    & =\left((\varpi\gamma_1^{i_1}\ldots \gamma_p^{i_p}\upsilon_1^{j_1}\ldots\upsilon_p^{j_p}\varpi)\mathfrak{H}_{\sigma},\bar{0}\right)= \\
    & =\left((\upsilon_1^{i_1}\ldots \upsilon_p^{i_p}\varpi\varpi\gamma_1^{j_1}\ldots\gamma_p^{j_p})\mathfrak{H}_{\sigma},\bar{0}\right)= \\
    & =\left((\upsilon_1^{i_1}\ldots \upsilon_p^{i_p}\gamma_1^{j_1}\ldots\gamma_p^{j_p})\mathfrak{H}_{\sigma},\bar{0}\right)= \\
    & =\left(b_1^{i_1}\ldots b_p^{i_p}a_1^{j_1}\ldots a_p^{j_p},\bar{0}\right)= \\
    & =\left(a_1^{j_1}\ldots a_p^{j_p}b_1^{i_1}\ldots b_p^{i_p},\bar{0}\right),
\end{split}
\end{equation*}
\end{itemize}
which implies that $(x_\varpi\cdot x)\mathfrak{I}=(x_\varpi)\mathfrak{I}\cdot(x)\mathfrak{I}$.

Therefore we have showed that $(x_\mathbb{I})\mathfrak{I}$ is the identity element of $\mathscr{P\!O}\!_{\infty}(\mathbb{N}^2_{\leqslant})/\sigma$ and $(x_\varpi)\mathfrak{I}\cdot (x_\varpi)\mathfrak{I}=(x_\mathbb{I})\mathfrak{I}$.

Next we shall show that so defined map $\mathfrak{I}$ is a homomorphism from $\mathscr{P\!O}\!_{\infty}(\mathbb{N}^2_{\leqslant})/\sigma$ into the semigroup $\mathfrak{AM}_\omega\rtimes_{\mathfrak{Q}}\mathbb{Z}_2$. Fix arbitrary elements  $x$ and $y$ of $\mathscr{P\!O}\!_{\infty}(\mathbb{N}^2_{\leqslant})/\sigma$. We consider the following four possible cases:
\begin{itemize}
  \item[$(i)$] $(\textsf{H}^{1}_{\operatorname{dom}\alpha_x})\alpha_x{\subseteq} \textsf{H}^1$ and $(\textsf{H}^{1}_{\operatorname{dom}\alpha_y})\alpha_y{\subseteq} \textsf{H}^1$ for any $\alpha_x,\alpha_y\in\mathscr{P\!O}\!_{\infty}(\mathbb{N}^2_{\leqslant})$ such that $(\alpha_x)\mathfrak{P}_{\sigma}=x$ and $(\alpha_y)\mathfrak{P}_{\sigma}=y$;

  \item[$(ii)$] $(\textsf{H}^{1}_{\operatorname{dom}\alpha_x})\alpha_x{\subseteq} \textsf{V}^1$ and $(\textsf{H}^{1}_{\operatorname{dom}\alpha_y})\alpha_y{\subseteq} \textsf{H}^1$ for any $\alpha_x,\alpha_y\in\mathscr{P\!O}\!_{\infty}(\mathbb{N}^2_{\leqslant})$ such that $(\alpha_x)\mathfrak{P}_{\sigma}=x$ and $(\alpha_y)\mathfrak{P}_{\sigma}=y$;

  \item[$(iii)$] $(\textsf{H}^{1}_{\operatorname{dom}\alpha_x})\alpha_x{\subseteq} \textsf{H}^1$ and $(\textsf{H}^{1}_{\operatorname{dom}\alpha_y})\alpha_y{\subseteq} \textsf{V}^1$ for any $\alpha_x,\alpha_y\in\mathscr{P\!O}\!_{\infty}(\mathbb{N}^2_{\leqslant})$ such that $(\alpha_x)\mathfrak{P}_{\sigma}=x$ and $(\alpha_y)\mathfrak{P}_{\sigma}=y$;

  \item[$(iv)$] $(\textsf{H}^{1}_{\operatorname{dom}\alpha_x})\alpha_x{\subseteq} \textsf{V}^1$ and $(\textsf{H}^{1}_{\operatorname{dom}\alpha_y})\alpha_y{\subseteq} \textsf{V}^1$ for any $\alpha_x,\alpha_y\in\mathscr{P\!O}\!_{\infty}(\mathbb{N}^2_{\leqslant})$ such that $(\alpha_x)\mathfrak{P}_{\sigma}=x$ and $(\alpha_y)\mathfrak{P}_{\sigma}=y$.
\end{itemize}

Assume that $(i)$ hods. Then we have that $\alpha_x,\alpha_y,\alpha_x\alpha_y\in\mathscr{P\!O}\!_{\infty}^{\,+}(\mathbb{N}^2_{\leqslant})$. Since $\sigma$ is a congruence on the semigroup $\mathscr{P\!O}\!_{\infty}(\mathbb{N}^2_{\leqslant})$, we may choose an element $\alpha_{xy}=\alpha_x\alpha_y\in\mathscr{P\!O}\!_{\infty}^{\,+}(\mathbb{N}^2_{\leqslant})$. Then $(\alpha_{xy})\mathfrak{P}_{\sigma}=xy$ Also, since $\mathfrak{P}_{\sigma}\colon \mathscr{P\!O}\!_{\infty}^{\,+}(\mathbb{N}^2_{\leqslant})\rightarrow\mathscr{P\!O}\!_{\infty}(\mathbb{N}^2_{\leqslant})/\sigma$ is the natural homomorphism generated by the congruence $\sigma$ on the semigroup $\mathscr{P\!O}\!_{\infty}(\mathbb{N}^2_{\leqslant})$ we get that
\begin{equation*}
\begin{split}
  (xy)\mathfrak{I} & =((\alpha_{xy})\mathfrak{P}_{\sigma})\mathfrak{I}= \left((\alpha_{xy})\mathfrak{H}_{\sigma},\bar{0}\right)=  \left((\alpha_{x}\alpha_{y})\mathfrak{H}_{\sigma},\bar{0}\right)= \left((\alpha_{x})\mathfrak{H}_{\sigma}\cdot(\alpha_{y})\mathfrak{H}_{\sigma},\bar{0}\cdot\bar{0}\right)= \\
    & = \left((\alpha_{x})\mathfrak{H}_{\sigma},\bar{0}\right)\cdot \left((\alpha_{y})\mathfrak{H}_{\sigma},\bar{0}\right)=  (x)\mathfrak{I}\cdot (y)\mathfrak{I}.
\end{split}
\end{equation*}

If $(ii)$ hods then by Propositions~1 and 3 from \cite{Gutik-Pozdniakova-2016??}, $\alpha_x\varpi,\alpha_y,\alpha_x\alpha_y\varpi\in\mathscr{P\!O}\!_{\infty}^{\,+}(\mathbb{N}^2_{\leqslant})$ and by Lemma~\ref{lemma-2.13} without loss of generality we may assume that
\begin{equation*}
  \alpha_x=\gamma_1^{i_1}\ldots \gamma_p^{i_p}\upsilon_1^{j_1}\ldots \upsilon_p^{j_p}\varpi\qquad \hbox{and} \qquad \alpha_y=\gamma_1^{s_1}\ldots \gamma_p^{s_p}\upsilon_1^{t_1}\ldots \upsilon_p^{t_p},
\end{equation*}
for some non-negative integers $p,i_1,\ldots, i_p,j_1,\ldots,j_p,s_1,\ldots,s_p,t_1,\ldots,t_p$, where $\gamma^0=\upsilon^0=\mathbb{I}$. This and the fact that $\sigma$ is a congruence on the semigroup $\mathscr{P\!O}\!_{\infty}(\mathbb{N}^2_{\leqslant})$, Proposition~\ref{proposition-2.19} imply that
\begin{equation*}
\begin{split}
  (xy)\mathfrak{I} & =\left((\alpha_x\alpha_y\varpi)\mathfrak{H}_{\sigma},\bar{1}\right)= \\
    & =\left((\gamma_1^{i_1}\ldots \gamma_p^{i_p}\upsilon_1^{j_1}\ldots \upsilon_p^{j_p}\varpi\gamma_1^{s_1}\ldots \gamma_p^{s_p}\upsilon_1^{t_1}\ldots \upsilon_p^{t_p}\varpi)\mathfrak{H}_{\sigma},\bar{1}\right)= \\
    & = \left((\gamma_1^{i_1}\ldots \gamma_p^{i_p}\upsilon_1^{j_1}\ldots \upsilon_p^{j_p}\upsilon_1^{s_1}\ldots \upsilon_p^{s_p}\varpi\varpi\gamma_1^{t_1}\ldots \gamma_p^{t_p})\mathfrak{H}_{\sigma},\bar{1}\right)=\\
    & = \left((\gamma_1^{i_1}\ldots \gamma_p^{i_p}\upsilon_1^{j_1}\ldots \upsilon_p^{j_p}\upsilon_1^{s_1}\ldots \upsilon_p^{s_p}\gamma_1^{t_1}\ldots \gamma_p^{t_p})\mathfrak{H}_{\sigma},\bar{1}\right)=\\
    & = \left(a_1^{i_1}\ldots a_p^{i_p}b_1^{j_1}\ldots b_p^{j_p}b_1^{s_1}\ldots b_p^{s_p}a_1^{t_1}\ldots a_p^{t_p},\bar{1}\right)=\\
    & =\left(a_1^{i_1}\ldots a_p^{i_p}b_1^{j_1}\ldots b_p^{j_p}(a_1^{s_1}\ldots a_p^{s_p}b_1^{t_1}\ldots b_p^{t_p})\mathfrak{f},\bar{1}\cdot\bar{0}\right)=\\
    & =\left(a_1^{i_1}\ldots a_p^{i_p}b_1^{j_1}\ldots b_p^{j_p},\bar{1}\right) \cdot
   \left(a_1^{s_1}\ldots a_p^{s_p}b_1^{t_1}\ldots b_p^{t_p},\bar{0}\right)= \\
    & =\left((\gamma_1^{i_1}\ldots \gamma_p^{i_p}\upsilon_1^{j_1}\ldots \upsilon_p^{j_p})\mathfrak{H}_{\sigma},\bar{1}\right) \cdot
   \left((\gamma_1^{s_1}\ldots \gamma_p^{s_p}\upsilon_1^{t_1}\ldots \upsilon_p^{t_p})\mathfrak{H}_{\sigma},\bar{0}\right)= \\
    & =\left((\gamma_1^{i_1}\ldots \gamma_p^{i_p}\upsilon_1^{j_1}\ldots \upsilon_p^{j_p}\varpi\varpi)\mathfrak{H}_{\sigma},\bar{1}\right) \cdot
   \left((\gamma_1^{s_1}\ldots \gamma_p^{s_p}\upsilon_1^{t_1}\ldots \upsilon_p^{t_p})\mathfrak{H}_{\sigma},\bar{0}\right)=\\
    & =\left((\alpha_x\varpi)\mathfrak{H}_{\sigma},\bar{1}\right) \cdot
   \left((\alpha_y)\mathfrak{H}_{\sigma},\bar{0}\right)=\\
    & = (x)\mathfrak{I}\cdot (y)\mathfrak{I}.
\end{split}
\end{equation*}

If $(iii)$ hods then by Propositions~1 and 3 from \cite{Gutik-Pozdniakova-2016??}, $\alpha_x,\alpha_y\varpi,\alpha_x\alpha_y\varpi\in\mathscr{P\!O}\!_{\infty}^{\,+}(\mathbb{N}^2_{\leqslant})$ and by Lemma~\ref{lemma-2.13} without loss of generality we may assume that
\begin{equation*}
  \alpha_x=\gamma_1^{i_1}\ldots \gamma_p^{i_p}\upsilon_1^{j_1}\ldots \upsilon_p^{j_p}\qquad \hbox{and} \qquad \alpha_y=\gamma_1^{s_1}\ldots \gamma_p^{s_p}\upsilon_1^{t_1}\ldots \upsilon_p^{t_p}\varpi,
\end{equation*}
for some non-negative integers $p,i_1,\ldots, i_p,j_1,\ldots,j_p,s_1,\ldots,s_p,t_1,\ldots,t_p$, where $\gamma^0=\upsilon^0=\mathbb{I}$. Since $\sigma$ is a congruence on the semigroup $\mathscr{P\!O}\!_{\infty}(\mathbb{N}^2_{\leqslant})$, this and Proposition~\ref{proposition-2.19} imply that

\begin{equation*}
\begin{split}
  (xy)\mathfrak{I} & =\left((\alpha_x\alpha_y\varpi)\mathfrak{H}_{\sigma},\bar{1}\right)= \\
    & =\left((\gamma_1^{i_1}\ldots \gamma_p^{i_p}\upsilon_1^{j_1}\ldots \upsilon_p^{j_p}\gamma_1^{s_1}\ldots \gamma_p^{s_p}\upsilon_1^{t_1}\ldots \upsilon_p^{t_p}\varpi\varpi)\mathfrak{H}_{\sigma},\bar{1}\right)= \\
    & = \left((\gamma_1^{i_1}\ldots \gamma_p^{i_p}\upsilon_1^{j_1}\ldots \upsilon_p^{j_p}\gamma_1^{s_1}\ldots \gamma_p^{s_p}\upsilon_1^{t_1}\ldots \upsilon_p^{t_p})\mathfrak{H}_{\sigma},\bar{1}\right)=\\
    & = \left(a_1^{i_1}\ldots a_p^{i_p}b_1^{j_1}\ldots b_p^{j_p}a_1^{s_1}\ldots a_p^{s_p}b_1^{t_1}\ldots b_p^{t_p}, \bar{0}\cdot\bar{1}\right)=\\
    & =\left(a_1^{i_1}\ldots a_p^{i_p}b_1^{j_1}\ldots b_p^{j_p},\bar{0}\right) \cdot
   \left(a_1^{s_1}\ldots a_p^{s_p}b_1^{t_1}\ldots b_p^{t_p},\bar{1}\right)= \\
    & =\left((\gamma_1^{i_1}\ldots \gamma_p^{i_p}\upsilon_1^{j_1}\ldots \upsilon_p^{j_p})\mathfrak{H}_{\sigma},\bar{0}\right) \cdot
   \left((\gamma_1^{s_1}\ldots \gamma_p^{s_p}\upsilon_1^{t_1}\ldots \upsilon_p^{t_p})\mathfrak{H}_{\sigma},\bar{1}\right)= \\
    & =\left((\gamma_1^{i_1}\ldots \gamma_p^{i_p}\upsilon_1^{j_1}\ldots \upsilon_p^{j_p})\mathfrak{H}_{\sigma},\bar{0}\right) \cdot
   \left((\gamma_1^{s_1}\ldots \gamma_p^{s_p}\upsilon_1^{t_1}\ldots \upsilon_p^{t_p}\varpi\varpi)\mathfrak{H}_{\sigma},\bar{1}\right)= \\
    & =\left((\alpha_x)\mathfrak{H}_{\sigma},\bar{0}\right) \cdot
   \left((\alpha_y\varpi)\mathfrak{H}_{\sigma},\bar{1}\right)=\\
    & = (x)\mathfrak{I}\cdot (y)\mathfrak{I}.
\end{split}
\end{equation*}

Assume that $(iv)$ hods. Then by Propositions~1 and 3 from \cite{Gutik-Pozdniakova-2016??} we have that $\alpha_x\varpi,\alpha_y\varpi,\alpha_x\alpha_y$, $\alpha_x\varpi\alpha_y\varpi\in\mathscr{P\!O}\!_{\infty}^{\,+}(\mathbb{N}^2_{\leqslant})$ and by Lemma~\ref{lemma-2.13} without loss of generality we may assume that
\begin{equation*}
  \alpha_x=\gamma_1^{i_1}\ldots \gamma_p^{i_p}\upsilon_1^{j_1}\ldots \upsilon_p^{j_p}\varpi\qquad \hbox{and} \qquad \alpha_y=\gamma_1^{s_1}\ldots \gamma_p^{s_p}\upsilon_1^{t_1}\ldots \upsilon_p^{t_p}\varpi,
\end{equation*}
for some non-negative integers $p,i_1,\ldots, i_p,j_1,\ldots,j_p,s_1,\ldots,s_p,t_1,\ldots,t_p$, where $\gamma^0=\upsilon^0=\mathbb{I}$. Since $\sigma$ is a congruence on the semigroup $\mathscr{P\!O}\!_{\infty}(\mathbb{N}^2_{\leqslant})$, this and  Proposition~\ref{proposition-2.19} imply that
\begin{equation*}
\begin{split}
  (xy)\mathfrak{I} & =\left((\alpha_x\alpha_y)\mathfrak{H}_{\sigma},\bar{0}\right)= \\
    & =\left((\gamma_1^{i_1}\ldots \gamma_p^{i_p}\upsilon_1^{j_1}\ldots \upsilon_p^{j_p}\varpi\gamma_1^{s_1}\ldots \gamma_p^{s_p}\upsilon_1^{t_1}\ldots \upsilon_p^{t_p}\varpi)\mathfrak{H}_{\sigma},\bar{0}\right)= \\
    & =\left((\gamma_1^{i_1}\ldots \gamma_p^{i_p}\upsilon_1^{j_1}\ldots \upsilon_p^{j_p}\upsilon_1^{s_1}\ldots \upsilon_p^{s_p}\varpi\varpi\gamma_1^{t_1}\ldots \gamma_p^{t_p})\mathfrak{H}_{\sigma},\bar{0}\right)= \\
    & =\left((\gamma_1^{i_1}\ldots \gamma_p^{i_p}\upsilon_1^{j_1}\ldots \upsilon_p^{j_p}\upsilon_1^{s_1}\ldots \upsilon_p^{s_p}\gamma_1^{t_1}\ldots \gamma_p^{t_p})\mathfrak{H}_{\sigma},\bar{0}\right)=  \\
    & =\left(a_1^{i_1}\ldots a_p^{i_p}b_1^{j_1}\ldots b_p^{j_p}b_1^{s_1}\ldots b_p^{s_p}a_1^{t_1}\ldots a_p^{t_p},\bar{0}\right)=  \\
    & =\left(a_1^{i_1}\ldots a_p^{i_p}b_1^{j_1}\ldots b_p^{j_p}\cdot(a_1^{s_1}\ldots a_p^{s_p}b_1^{t_1}\ldots b_p^{t_p})\mathfrak{f},\bar{1}\cdot\bar{1}\right) \\
    & =\left(a_1^{i_1}\ldots a_p^{i_p}b_1^{j_1}\ldots b_p^{j_p},\bar{1}\right) \cdot
   \left(a_1^{s_1}\ldots a_p^{s_p}b_1^{t_1}\ldots b_p^{t_p},\bar{1}\right)= \\
    & =\left((\gamma_1^{i_1}\ldots \gamma_p^{i_p}\upsilon_1^{j_1}\ldots \upsilon_p^{j_p})\mathfrak{H}_{\sigma},\bar{1}\right) \cdot
   \left((\gamma_1^{s_1}\ldots \gamma_p^{s_p}\upsilon_1^{t_1}\ldots \upsilon_p^{t_p})\mathfrak{H}_{\sigma},\bar{1}\right)= \\
    & =\left((\gamma_1^{i_1}\ldots \gamma_p^{i_p}\upsilon_1^{j_1}\ldots \upsilon_p^{j_p}\varpi\varpi)\mathfrak{H}_{\sigma},\bar{1}\right) \cdot
   \left((\gamma_1^{s_1}\ldots \gamma_p^{s_p}\upsilon_1^{t_1}\ldots \upsilon_p^{t_p}\varpi\varpi)\mathfrak{H}_{\sigma},\bar{1}\right)= \\
    & =\left((\alpha_x\varpi)\mathfrak{H}_{\sigma},\bar{1}\right) \cdot
   \left((\alpha_y\varpi)\mathfrak{H}_{\sigma},\bar{1}\right)=\\
    & = (x)\mathfrak{I}\cdot (y)\mathfrak{I}.
\end{split}
\end{equation*}

Thus the map $\mathfrak{I}\colon \mathscr{P\!O}\!_{\infty}(\mathbb{N}^2_{\leqslant})/\sigma \to \mathfrak{AM}_\omega\rtimes_{\mathfrak{Q}} \mathbb{Z}_2$ is a homomorphism. Also, since $(x_\mathbb{I})\mathfrak{I}=\left(e,\bar{0}\right)$, $(x_\varpi)\mathfrak{I}=\left(e,\bar{1}\right)$ and for any $\alpha_x=\gamma_1^{i_1}\ldots \gamma_p^{i_p}\upsilon_1^{j_1}\ldots \upsilon_p^{j_p}$, where $p,i_1,\ldots, i_p,j_1,\ldots,j_p$ are some positive integers, our above arguments imply that
\begin{equation*}
  (x)\mathfrak{I}=\left(a_1^{i_1}\ldots a_p^{i_p}b_1^{j_1},\bar{0}\right) \qquad \hbox{and} \qquad (y)\mathfrak{I}=\left(a_1^{i_1}\ldots a_p^{i_p}b_1^{j_1},\bar{1}\right),
\end{equation*}
where $x=(\alpha_x)\mathfrak{P}_{\sigma}$ and  $y=(\alpha_x\varpi)\mathfrak{P}_{\sigma}$. This implies that the homomorphism $\mathfrak{I}$ is surjective.

Now suppose that $(x)\mathfrak{I}=(y)\mathfrak{I}=(u,g)$ for some $x,y\in \mathscr{P\!O}\!_{\infty}(\mathbb{N}^2_{\leqslant})/\sigma$. Then there exist $\alpha_x,\alpha_y\in \mathscr{P\!O}\!_{\infty}^{\,+}(\mathbb{N}^2_{\leqslant})$ such that $(\alpha_x)\mathfrak{P}_{\sigma}=x$ and $(\alpha_y)\mathfrak{P}_{\sigma}=y$ in the case when $g=\bar{0}$, and $(\alpha_x\varpi)\mathfrak{P}_{\sigma}=x$ and  $(\alpha_y\varpi)\mathfrak{P}_{\sigma}=y$ in the case when $g=\bar{1}$. If $g=\bar{0}$ then $x,y\in\mathscr{P\!O}\!_{\infty}^{\,+}(\mathbb{N}^2_{\leqslant})$ and the condition $\alpha_x\sigma\alpha_y$ in $\mathscr{P\!O}\!_{\infty}^{\,+}(\mathbb{N}^2_{\leqslant})$ implies the equality $x=y$. Similarly, if $g=\bar{1}$ then $x,y\in\mathscr{P\!O}\!_{\infty}(\mathbb{N}^2_{\leqslant})\setminus\mathscr{P\!O}\!_{\infty}^{\,+}(\mathbb{N}^2_{\leqslant})$ and the condition $\alpha_x\varpi\sigma\alpha_y\varpi$ in $\mathscr{P\!O}\!_{\infty}^{\,+}(\mathbb{N}^2_{\leqslant})$ implies the equality $x=y$.
Hence $\mathfrak{I}\colon \mathscr{P\!O}\!_{\infty}(\mathbb{N}^2_{\leqslant})/\sigma \to \mathfrak{AM}_\omega\rtimes_{\mathfrak{Q}} \mathbb{Z}_2$ is an isomorphism.
\end{proof}


\section*{Acknowledgements}
The authors acknowledge Taras Banakh and Alex Ravsky for their comments and suggestions.

\end{document}